\def\eqref#1{equation~\ref{#1}}
\def\1{\bm{1}}
\DeclareMathAlphabet{\mathsfit}{\encodingdefault}{\sfdefault}{m}{sl}
\SetMathAlphabet{\mathsfit}{bold}{\encodingdefault}{\sfdefault}{bx}{n}
\definecolor{ccr}{RGB}{10,110,150}  
\newtheorem{Thm}{Theorem}
\newtheorem{Lem}{Lemma}
\newtheorem{Ass}{Assumption}
\newtheorem{Rem}{Remark}
\newtheorem{Cor}{Corollary}
\renewcommand{\eqref}[1]{(\ref{#1})}
\newcommand{{\alg}}{D-AdaST} 
\title{Achieving Near-Optimal Convergence for Distributed Minimax Optimization with Adaptive Stepsizes}
\author{%
  Yan Huang\\
  College of Control Science and Engineering\\
  Zhejiang University, China \\
  \texttt{huangyan5616@zju.edu.cn} \\
  \And
  Xiang Li\\
  Department of Computer Science\\
  ETH Zurich, Switzerland\\
  \texttt{xiang.li@inf.ethz.ch} \\
  \AND
  Yipeng Shen\\
  College of Control Science and Engineering\\
  Zhejiang University, China \\
  \texttt{22332074@zju.edu.cn} \\
  \And
  Niao He\\
  Department of Computer Science\\
  ETH Zurich, Switzerland\\
  \texttt{niao.he@inf.ethz.ch} \\
  \And
  Jinming Xu \\
  College of Control Science and Engineering\\
  Zhejiang University, China \\
  \texttt{jimmyxu@zju.edu.cn} \\
}
\begin{document}

\maketitle

\begin{abstract}
In this paper, we show that applying adaptive methods directly to distributed minimax problems can result in non-convergence due to inconsistency in locally computed adaptive stepsizes. To address this challenge, we propose {\alg}, a \underline{D}istributed \underline{Ada}ptive minimax method with \underline{S}tepsize \underline{T}racking. The key strategy is to employ an adaptive stepsize tracking protocol involving the transmission of two extra (scalar) variables. This protocol ensures the consistency among stepsizes of nodes, eliminating the steady-state error due to the lack of coordination of stepsizes among nodes that commonly exists in vanilla distributed adaptive methods, and thus guarantees exact convergence. For nonconvex-strongly-concave distributed minimax problems, we characterize the specific transient times that ensure time-scale separation of stepsizes and quasi-independence of networks, leading to a near-optimal convergence rate of $\tilde{\mathcal{O}} \left( \epsilon ^{-\left( 4+\delta \right)} \right)$ for any small $\delta > 0$, matching that of the centralized counterpart. To our best knowledge, {\alg} is the \textit{first} distributed adaptive method achieving near-optimal convergence without knowing any problem-dependent parameters for nonconvex minimax problems.
Extensive experiments are conducted to validate our theoretical results.
\end{abstract}

\section{Introduction}

Distributed optimization has seen significant research progress over the last decade, resulting in numerous algorithms \citep{nedic2009distributed, yuan2016convergence, lian2017can, pu2021distributed}. However, the traditional focus of distributed optimization has primarily been on minimization tasks. With the rapid growth of machine learning research, various applications have emerged that go beyond simple minimization, such as Generative Adversarial Networks (GANs) \citep{goodfellow2014generative, gulrajani2017improved}, robust optimization \citep{mohri2019agnostic, sinha2017certifying}, adversary training of neural networks \citep{wang2021adversarial},  fair machine learning \citep{madras2018learning}, and just to name a few. These tasks typically involve a minimax structure as follows: 
\begin{equation*}
\underset{x\in \mathcal{X}}{\min}\,\,\underset{y\in \mathcal{Y}}{\max}\,f\left( x,y \right),
\end{equation*}
where $\mathcal{X} \subseteq \mathbb{R}^p$, $\mathcal{Y} \subseteq \mathbb{R}^d$, and $x,y$ are the primal and dual variables to be {learned}, respectively. One of the simplest yet effective methods for solving the above minimax problem is Gradient Descent Ascent (GDA)~\citep{dem1972numerical,nemirovski2009robust} which alternately performs stochastic gradient descent for the primal variable and stochastic gradient ascent for the dual variable. This approach has demonstrated its effectiveness in solving minimax problems, especially for convex-concave objectives  \citep{hsieh2021limits, daskalakis2021complexity,antonakopoulos2021adaptive}, i.e., the function $f(\cdot, y)$ is convex for any $y \in \mathcal{Y}$, and $f(x, \cdot)$ is concave for any $x \in \mathcal{X}$. 

Adaptive gradient methods, such as AdaGrad \citep{duchi2011adaptive}, Adam \citep{kingma2014adam}, and AMSGrad \citep{reddi2018convergence}, are often integrated with GDA to effectively solve minimax problems with theoretical guarantees in convex-concave settings \citep{diakonikolas2020halpern, antonakopoulos2021adaptive, ene2022adaptive}. These adaptive methods are capable of adjusting stepsizes based on historical {gradient} information, making it robust to hyper-parameters tuning and can converge without requiring to know problem-dependent parameters (a characteristic often referred to as being ``parameter-agnostic"). However, in the nonconvex regime, it has been shown by \citet{lin2020gradient,yang2022nest} that it is {necessary} to have a time-scale separation in stepsizes between the minimization and maximization processes to ensure the convergence of GDA and GDA-based adaptive algorithms. In particular, the stepsize ratio between primal and dual variables needs to be smaller than a threshold depending on the properties of the problem such as the smoothness and {strong-concavity} parameters \citep{li2022convergence, guo2021novel, huang2021efficient}, {which are often unknown or difficult to estimate
in real-world tasks, such as training deep neural networks. }

Applying GDA-based adaptive methods into decentralized settings poses additional challenges due to the presence of inconsistency in locally computed adaptive stepsizes. In particular, it has been shown that the inconsistency of stepsizes can result in non-convergence in federated learning with heterogeneous computation speeds \citep{wang2020tackling, sharma2023federated}. This is mainly due to the lack of a central node coordinating the stepsizes of nodes in distributed settings, making it difficult to converge, as observed in minimization problems \citep{liggett2022distributed, chen2023convergence}. As a result, the following question arises naturally:

\emph{``Can we design an adaptive minimax method that ensures the time-scale separation and consistency of stepsizes with provable convergence in fully distributed settings?"}

\textbf{Contributions.} In this paper, we aim to propose a distributed adaptive method for efficiently solving nonconvex-strongly-concave (NC-SC) minimax problems. The contributions are threefold:
\begin{itemize}
\item 
We construct counterexamples showing that directly applying adaptive methods designed for centralized problems will lead to inconsistencies in locally computed adaptive stepsizes, resulting in non-convergence in distributed settings.
To tackle this issue, we propose the \textit{first} distributed adaptive minimax method, named {\alg}, that incorporates an efficient stepsize tracking mechanism to maintain consistency across local stepsizes, which involves the transmission of merely two extra (scalar) variables. The proposed algorithm exhibits time-scale separation in stepsizes and parameter-agnostic capability in fully distributed settings. 

\item 
Theoretically, we prove that {\alg} is able to achieve a near-optimal convergence rate of $\tilde{\mathcal{O}} \left( \epsilon ^{-\left( 4+\delta \right)} \right) $ with arbitrarily small $\delta>0$ to find an $\epsilon$-stationary point for distributed NC-SC minimax problems.
In contrast, we also prove the existence of a constant steady-state error in both the lower and upper bounds for GDA-based distributed minimax algorithms when being directly integrated with the adaptive stepsize rule without the stepsize tracking mechanism.
Moreover, we explicitly characterize the transient times that ensure time-scale separation and quasi-independence of the network, respectively.
 
\item We conduct extensive experiments on real-world datasets to verify our theoretical findings and the effectiveness of {\alg} 
on a variety of tasks, including robust training of neural networks and optimizing Wasserstein GANs. In all tasks, we demonstrate the superiority of {\alg} over several vanilla distributed adaptive methods across various graphs, initial stepsizes and data distributions (see also additional experiments in Appendix \ref{App_Additional Experiments}).
\end{itemize}

\subsection{Related Works}

\textbf{Distributed nonconvex minimax methods.} In the realm of federated learning, \citet{deng2021local} introduce Local SGDA algorithm combining FedAvg/Local SGD with stochastic GDA and show an $\tilde{\mathcal{O}}\left( \epsilon ^{-6} \right) $ sample complexity for NC-SC objective functions.
\citet{sharma2022federated} provide improved complexity result of $\tilde{\mathcal{O}}\left( \epsilon ^{-4} \right)$ matching that of the lower bound \citep{li2021complexity, zhang2021complexity} for both NC-SC and nonconvex-Polyak-Lojasiewicz (NC-PL) settings.
\citet{NEURIPS2022_2f13806d} integrate Local SGDA with stochastic gradient estimators to eliminate the data heterogeneity. More recently, \citet{zhang2023communication} adopt compressed momentum methods with Local SGD to increase the communication efficiency of the algorithm. For decentralized nonconvex minimax problems, 
\citet{liu2020decentralized} study the training of GANs using decentralized optimistic stochastic gradient and provide non-asymptotic convergence with fixed stepsizes.
\citet{tsaknakis2020decentralized} propose a double-loop decentralized SGDA algorithm with gradient tracking techniques \citep{pu2021distributed} and achieve $\tilde{\mathcal{O}}\left( \epsilon ^{-4} \right) $ sample complexity. {There are also some other works that incorporate variance reduction techniques to achieve a faster convergence rate \citep{zhang2021taming, chen2022simple, xian2021faster, tarzanagh2022fednest, wu2023solving, chen2024efficient, zhang2024jointly}, at the cost of more memory and computing power for lager batch-size or full gradient evaluation.}
However, all the above-mentioned methods use a fixed or uniformly decaying stepsize, requiring the prior knowledge of smoothness and concavity.

\textbf{(Distributed) adaptive minimax methods.} 

For centralized nonconvex minimax problems, \citet{yang2022nest} show that, even in deterministic settings, GDA-based methods necessitate the time-scale separation of the stepsizes for primal and dual updates. Many attempts have been made for ensuring the time-scale separation requirement \citep{lin2020gradient, yang2022faster, boct2023alternating, huang2023adagda}. However, these methods typically come with the prerequisite of having knowledge about problem-dependent parameters, which can be a significant drawback in practical scenarios. To this end, \citet{yang2022nest} introduce a nested adaptive algorithm named NeAda that achieves parameter-agnosticism by incorporating an inner loop to effectively maximize the dual variable, which can obtain an optimal sample complexity of $\tilde{\mathcal{O}}\left( \epsilon ^{-4} \right)$ when the strong-concavity parameter is known. More recently, \citet{li2023tiada} introduce TiAda, a single-loop parameter-agnostic adaptive algorithm for nonconvex minimax optimization which employs separated exponential factors on the adaptive primal and dual stepsizes, improving upon NeAda on the noise-adaptivity.
There has been few works dedicated to adaptive minimax optimization in federated learning settings. For instance, \citet{huang2024adaptive} introduces a federated adaptive algorithm that integrates the stepsize rule of Adam with full-client participation, resembling the centralized counterpart.  \citet{ju2023accelerating} study a federated Adam algorithm for fair federated learning where the objective function is properly weighted to account for heterogeneous updates among nodes.
To the best of our knowledge, it is still unknown how one can design an adaptive minimax method capable of fulfilling the time-scale separation requirement and being parameter-agnostic in \emph{fully distributed settings}.

\textbf{Notations.} Throughout this paper, we denote by $\mathbb{E}\left[ \cdot \right] $ the expectation of a random variable, $\left\| \cdot \right\| $ the Frobenius norm,  $\left< \cdot ,\cdot \right>$ the inner product of two vectors, $\odot $ the Hadamard product (entry wise),  $\otimes$ the Kronecker product. We denote by $\mathbf{1}$ the all-ones vector, $\mathbf{I}$ the identity matrix and $\mathbf{J}=\mathbf{1}\mathbf{1}^T/n$ the averaging matrix with $n$ dimension. For a vector or matrix $A$ and constant $\alpha$, we denote $ A^\alpha$ the entry-wise exponential operations.
We denote $\varPhi \left( x \right) :=f\left( x,y^*\left( x \right) \right) $ as the primal function where $y^*\left( x \right) =\mathrm{arg} \underset{y\in \mathcal{Y}}{\max}f\left( x,y \right) $, and $\mathcal{P} _{\mathcal{Y}}\left( \cdot \right) $ as the projection operation onto set $\mathcal{Y}$.

\section{Distributed Adaptive Minimax Methods}
\label{Sec_algorithm_design}
{We consider the distributed minimax problem collaboratively solved by a set of agents over a network.} The overall objective of the agents is to solve the following finite-sum problem:
\vspace{-0.05cm}
\begin{equation}\label{Prob_minimax}
\vspace{-0.05cm}
\underset{x\in \mathbb{R} ^p}{\min}\,\,\underset{y\in \mathcal{Y}}{\max}f\left( x,y \right) =\frac{1}{n}\sum_{i=1}^n{\underset{:=f_i\left( x,y \right)}{\underbrace{\mathbb{E} _{\xi _i\sim \mathcal{D} _i}\left[ F_i\left( x,y;\xi _i \right) \right] }}},
\end{equation}
where $f_i:\mathbb{R}^{p+d}\rightarrow \mathbb{R}$ is the local private loss function accessible only by the associated node $i\in\mathcal{N}=\left\{ 1,2,\cdots ,n \right\}$, $\mathcal{Y} \subset \mathbb{R} ^d$ is closed and convex, and $\xi _i\sim \mathcal{D} _i$ denotes the data sample locally stored at node $i\in\mathcal{N}$ with distribution $\mathcal{D} _i$. We consider a graph $\mathcal{G}=(\mathcal{V},\mathcal{E})$, here, $\mathcal{V}=\{1,2,...,n\}$ represents the set of agents, and $\mathcal{E}\subseteq\mathcal{V}\times \mathcal{V}$ denotes the set of edges consisting of ordered pairs $(i,j)$ representing the communication link from node $j$ to node $i$. For node $i$, we define $\mathcal{N}_i= \{ j \mid \left( i, j \right) \in \mathcal{E} \}$ as the set of its neighboring nodes.
Before proceeding to the discussion of distributed algorithms, we first introduce the following notations for brevity:
\vspace{-0.1cm}
\begin{equation*}
\begin{aligned}
\mathbf{x}_k&:=\left[ x_{1,k},x_{2,k},\cdots ,x_{n,k} \right] ^T\in \mathbb{R} ^{n\times p}, \,\,
\mathbf{y}_k:=\left[ y_{1,k},y_{2,k},\cdots ,y_{n,k} \right] ^T\in \mathbb{R} ^{n\times d},
\end{aligned}
\end{equation*}
where $x_{i,k}\in \mathbb{R} ^p, y_{i,k}\in \mathcal{Y}$ denote the primal and dual variable of node $i$ at each iteration $k$, and
\begin{equation*}
\begin{aligned}
&\nabla _xF\left( \mathbf{x}_k,\mathbf{y}_k;\xi _{k}^{x} \right) :=\left[ \cdots ,\nabla _xF_i\left( x_{i,k},y_{i,k};\xi _{i,k}^{x} \right) ,\cdots \right] ^T,
\\
&\nabla _yF\left( \mathbf{x}_k,\mathbf{y}_k;\xi _{k}^{y} \right) :=\left[ \cdots ,\nabla _yF_i\left( x_{i,k},y_{i,k};\xi _{i,k}^{y} \right) ,\cdots \right] ^T
\end{aligned}
\end{equation*}
are the corresponding partial stochastic gradients with i.i.d. samples $\xi _{k}^{x}, \xi _{k}^{y}$ in a compact form. 

Next, we will first explain the pitfalls of directly applying centralized adaptive stepsize rules to decentralized settings, and then introduce our newly proposed solution to address the challenge.

\subsection{Non-Convergence of Direct Extensions}

\begin{figure*}[!tb]
    \centering
    \subfloat[trajectory]
    {
        \begin{minipage}[t]{0.3\textwidth}
            \centering
            \includegraphics[width=\textwidth]{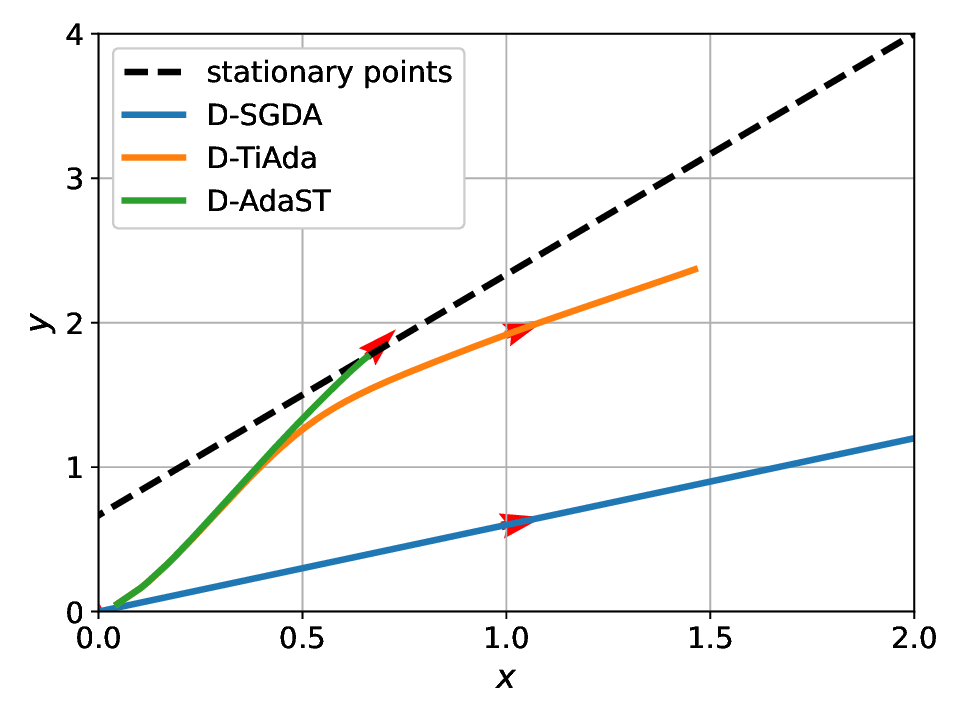}  
        \end{minipage}
    }
    \subfloat[convergence of $\left\| \nabla _xf\left( x,y \right) \right\| ^2$] 
    {
        \begin{minipage}[t]{0.3\textwidth}
            \centering 
            \includegraphics[width=\textwidth]{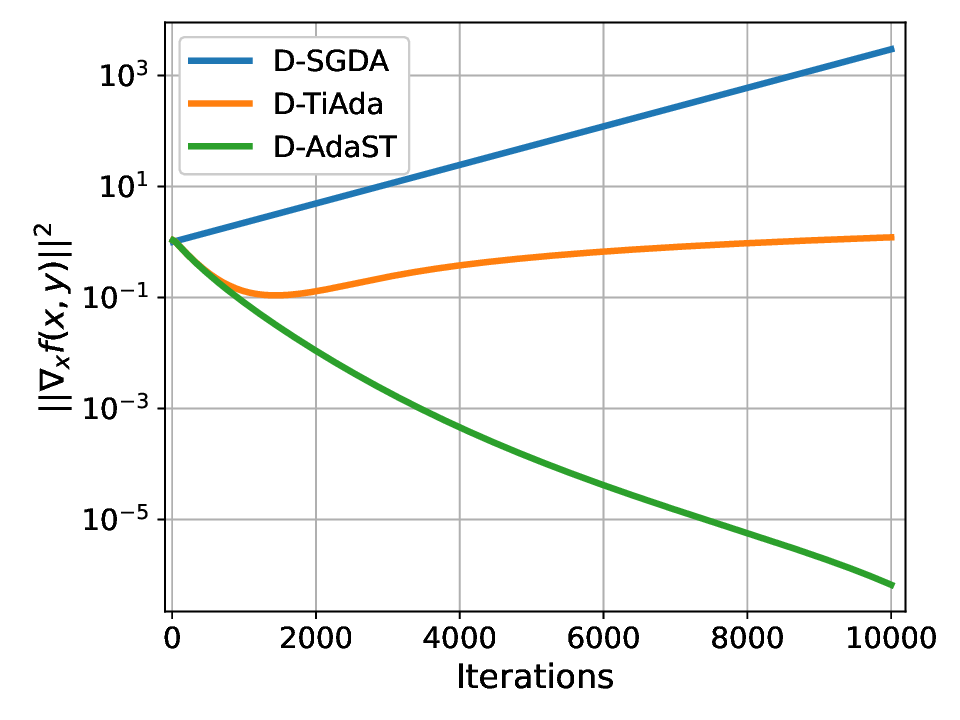}  
        \end{minipage}
    }
    \subfloat[convergence of $\zeta_v^2$] 
    {
        \begin{minipage}[t]{0.3\textwidth}
            \centering       
            \includegraphics[width=\textwidth]{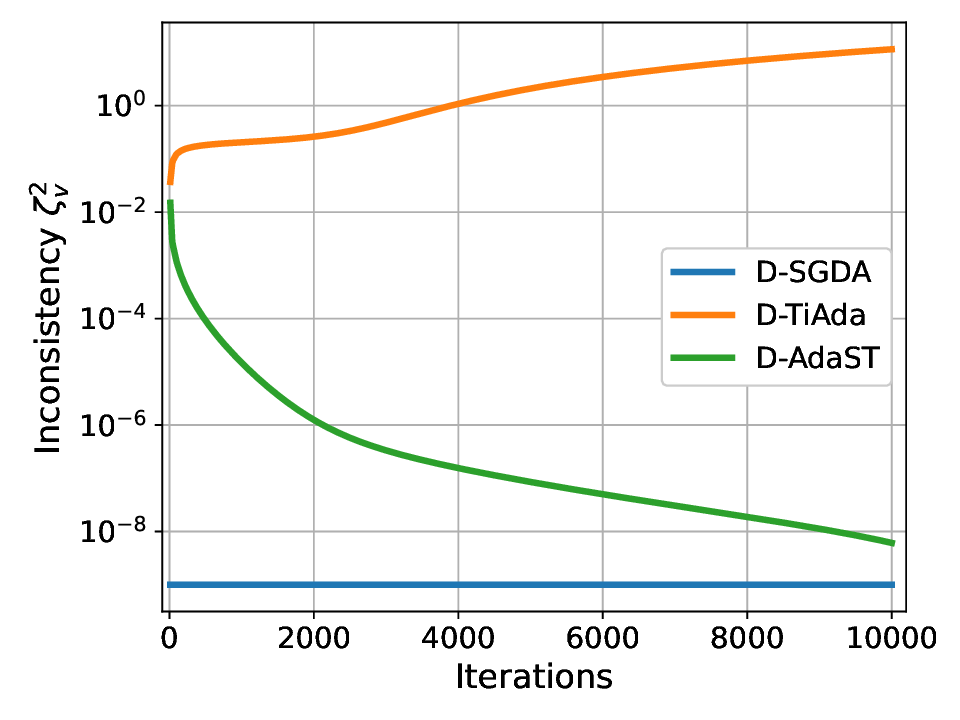}  
        \end{minipage}%
    }
    \caption{Comparison among D-SGDA, D-TiAda and {\alg} for NC-SC quadratic objective function (\ref{Eq_case_study}) with $n=2$ nodes and $\gamma_x=\gamma_y$. In (a), it shows the trajectories of primal and dual variables of the algorithms, the points on the black dash line are stationary points of $f$. In (b), it shows the convergence of $\left\| \nabla_x f\left( x_k, y_k \right) \right\| ^2$ over the iterations. In (c), it shows the convergence of the inconsistency of stepsizes, $\zeta_{v}^{2}$ defined in \eqref{Def_hete_stepsize}, over the iterations. Notably, $\zeta_{v}^{2}$ fails to converge for D-TiAda and $\zeta_{v}^{2}=0$ for non-adaptive D-SGDA.}
    \label{Fig_case_study}
\end{figure*}

For the distributed minimax optimization problem as depicted in \eqref{Prob_minimax} involving NC-SC objective functions, we will show shortly that the Distributed Stochastic Gradient Descent Ascent (D-SGDA) method may not converge due to the inability of time-scale separation with constant stepsizes (c.f., Figure \ref{Fig_case_study}), which is also observed in centralized settings \citep{lin2020gradient, yang2022nest}. To address this issue, one can adopt the adaptive stepsize rule used in centralized TiAda~\citep{li2023tiada} for each individual node, which is renowned for its ability to adaptively fulfill the time-scale separation requirements. As a result, we arrive at the following Distributed TiAda (D-TiAda) algorithm.
\begin{subequations}\label{Alg_vaniila_adap}
\begin{align}
\mathbf{x}_{k+1}&=W\left( \mathbf{x}_k-\gamma _xV_{k+1}^{-\alpha}\nabla _xF\left( \mathbf{x}_k,\mathbf{y}_k;\xi _k^x \right) \right) ,
\\
\mathbf{y}_{k+1}&=\mathcal{P} _{\mathcal{Y}}\left( W\left( \mathbf{y}_k+\gamma _yU_{k+1}^{-\beta}\nabla _yF\left( \mathbf{x}_k,\mathbf{y}_k;\xi _{k}^{y} \right) \right) \right),
\end{align}
\end{subequations}
where $\gamma_x$ and $\gamma_y$ are the stepsizes, $W$ is a doubly-stochastic weight matrix induced by graph $\mathcal{G}$ \citep{xiao2006distributed} (c.f., Assumption~\ref{Ass_graph}), and
\begin{equation}\label{Def_U_V}
\begin{aligned}
&V_{k+1}^{-\alpha}=\mathrm{diag}\left\{ v_{i,k+1}^{-\alpha} \right\} _{i=1}^{n},\quad U_{k+1}^{-\beta}=\mathrm{diag}\left\{ u_{i,k+1}^{-\beta} \right\} _{i=1}^{n},
\end{aligned}
\end{equation}
with $v_{i,k+1}=\max \left\{ m_{i,k+1}^{x},m_{i,k+1}^{y} \right\}, u_{i,k+1}=m_{i,k+1}^{y}$, and
\begin{equation}
\begin{aligned}
&m_{i,k+1}^{x}=m_{i,k}^{x}+\left\| \nabla _xF_i\left( x_{i,k},y_{i,k};\xi _{i,k}^{x} \right) \right\| ^2,\,\,m_{i,k+1}^{y}=m_{i,k}^{y}+\left\| \nabla _yF_i\left( x_{i,k},y_{i,k};\xi _{i,k}^{y} \right) \right\| ^2
\end{aligned}
\end{equation}
are the local accumulated gradient norm. Note that we impose a maximum operator in the preconditioner $v_{i, k}$, and employ different stepsize decaying rates, i.e., $0 < \beta < \alpha < 1$, for the primal and dual variables, respectively. Such design allows to balance the updates of $x$ and $y$, and achieves the desired time-scale separation without requiring any knowledge of parameters \citep{li2023tiada}.

However, in the distributed setting, such direct extension may fail to converge to a stationary point because $v_{i,k}$ and $u_{i,k}$
can be inconsistent due to the difference of local objective functions $f_i$,
In particular, we can rewrite the above vanilla distributed optimization algorithm \eqref{Alg_vaniila_adap} in the sense of average system of primal variables as below, 
\begin{equation}\label{Eq_show_inconsistancy}
\begin{aligned}
\bar{x}_{k+1}&=\underset{\mathrm{adaptive}\,\,\mathrm{descent}}{\underbrace{\bar{x}_k-\gamma _x\bar{v}_{k}^{-\alpha}\frac{\mathbf{1}^T}{n}\nabla _xF\left( \mathbf{x}_k,\mathbf{y}_k;\xi _{k}^{x} \right) }}
-\underset{\mathrm{inconsistancy}}{\underbrace{\gamma _x\frac{\left( \boldsymbol{\tilde{v}}_{k+1}^{-\alpha} \right) ^T}{n}\nabla _xF\left( \mathbf{x}_k,\mathbf{y}_k;\xi _{k}^{x} \right) }},
\end{aligned}
\end{equation}
where $\left( \boldsymbol{\tilde{v}}_{k}^{-\alpha} \right) ^T:=\left[ \cdots ,v_{i,k}^{-\alpha}-\bar{v}_{k}^{-\alpha},\cdots \right]$, $\bar{x}_k:=\mathbf{1}^T\mathbf{x}_k/n$ and $\bar{v}_k:=1/n\sum_{i=1}^n{v_{i,k}}$.

It is evident that, in comparison to centralized adaptive methods, an unexpected term (i.e., $\boldsymbol{\tilde{v}}_k$) on the right-hand side (RHS) arises due to inconsistencies. This term introduces inaccuracies in the directions of gradient descent, degrading the optimization performance. 
The theorem presented below reveals a gap near the stationary points in a properly designed counterexample, indicating the non-convergence of D-TiAda. The proof is available in Appendix \ref{App_proof_of_Thm1}.
\begin{Thm}\label{thm:nonconverge_tiada}
  There exists a distributed minimax problem in the form of Problem \eqref{Prob_minimax}
  and certain initialization such that after running D-TiAda with any $0 < \beta
  < 0.5 < \alpha<1$ and $\gamma_x, \gamma_y > 0$, it holds that for
  any $t = 0, 1, 2, \dots$, we have,
    \begin{equation*}
    \begin{aligned}
    &\parallel \nabla _xf(x_t,y_t)\parallel =\parallel \nabla _xf(x_0,y_0)\parallel , \quad \parallel \nabla _yf(x_t,y_t)\parallel =\parallel \nabla _yf(x_0,y_0)\parallel ,
    \end{aligned}
    \end{equation*}
  where $\| \nabla_x f(x_0, y_0) \|$ and $\| \nabla_y f(x_0, y_0) \|$
  can be arbitrarily large depending on the initialization.
\end{Thm}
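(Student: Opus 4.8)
The plan is to exhibit an explicit two-node, scalar NC-SC instance on which the inconsistency term isolated in \eqref{Eq_show_inconsistancy} exactly cancels the adaptive-descent term at every iteration, so that the iterates never move and all gradient norms stay frozen at their (arbitrarily large) initial values. I take $n=2$ with the fully-averaging weight matrix $W=\mathbf{J}=\tfrac12\mathbf{1}\mathbf{1}^T$, so that a single consensus step restores $x_{1,k}=x_{2,k}=:\bar x_k$ and $y_{1,k}=y_{2,k}=:\bar y_k$; I run the deterministic version ($F_i=f_i$) with zero-initialised accumulators $m_{i,0}^x=m_{i,0}^y=0$. The two local functions are quadratics, strongly concave in $y$ with parameter $\mu$ and made nonconvex in $x$ by a concave quadratic term, whose gradients at a chosen point $(x_0,y_0)$ take prescribed values $\nabla_x f_i=g_i^x$ and $\nabla_y f_i=g_i^y$; since $(x_0,y_0)$ will turn out to be frozen by the dynamics, only these gradient values matter.

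The quantitative heart is the choice of the $g_i^x,g_i^y$. Freezing the primal variable requires the averaged preconditioned gradient $\tfrac12\big(v_{1,k+1}^{-\alpha}g_1^x+v_{2,k+1}^{-\alpha}g_2^x\big)$ to vanish while the true average $g_1^x+g_2^x$ does not. The enabling observation is that at a fixed point the gradients are constant, so with zero initialisation $m_{i,k}^x=k(g_i^x)^2$ and $m_{i,k}^y=k(g_i^y)^2$, whence $v_{i,k}=k\,V_i$ with $V_i:=\max\{(g_i^x)^2,(g_i^y)^2\}$, and the ratio $v_{1,k}/v_{2,k}=V_1/V_2$ is exactly independent of $k$. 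The factor $k^{-\alpha}$ then pulls out of the cancellation condition, which therefore reduces to the single algebraic requirement $g_1^x/g_2^x=-(V_1/V_2)^{\alpha}$. Choosing $|g_i^y|\ge|g_i^x|$ so that $V_i=(g_i^y)^2$, this reads $g_1^x/g_2^x=-|g_1^y/g_2^y|^{2\alpha}$; picking $|g_1^y/g_2^y|\ne 1$ forces the two raw gradients to have opposite signs but unequal magnitudes, so $g_1^x+g_2^x\ne 0$, i.e.\ $\nabla_x f(x_0,y_0)\ne 0$. A common rescaling of all four numbers preserves the cancellation identity while sending both $\|\nabla_x f(x_0,y_0)\|$ and $\|\nabla_y f(x_0,y_0)\|$ to infinity, which supplies the ``arbitrarily large'' clause.

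To freeze the dual variable for every $\gamma_y>0$, I place $y_0$ on the boundary of $\mathcal{Y}$ and make both $g_i^y>0$ point strictly outward; then the averaged ascent step $\bar y_k+\tfrac{\gamma_y}{2}\big(u_{1,k+1}^{-\beta}g_1^y+u_{2,k+1}^{-\beta}g_2^y\big)$ always exceeds $\bar y_k$, so $\mathcal{P}_{\mathcal{Y}}$ returns it to $y_0$ regardless of the step magnitude, hence irrespective of $\gamma_y$ and $\beta$. A short induction then closes the argument: assuming $(\bar x_k,\bar y_k)=(x_0,y_0)$, consensus is preserved by $\mathbf{J}$, the primal update is frozen by the cancellation (for every $\gamma_x>0$, since the bracket is identically zero), and the dual update is frozen by the projection; the constancy of the gradients reproduces the accumulator pattern above, so the hypothesis holds at $k+1$. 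Consequently $(x_t,y_t)=(x_0,y_0)$ and both gradient norms equal their initial values for all $t$.

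The step I expect to be the crux is showing the cancellation is \emph{self-sustaining} across all iterations rather than merely initial: this is precisely where the max-based AdaGrad accumulator with zero initialisation is used, guaranteeing $k$-independence of $v_{1,k}/v_{2,k}$ so that one algebraic identity suffices for every $k$. The rest is bookkeeping — checking NC-SC membership of the constructed quadratics, that $V_i=(g_i^y)^2$ (the $\max$ is attained by the dual accumulator) at every $k$, and that $y_0$ is genuinely the constrained maximiser so the outward projection stays active. Note the instance is built for the given exponent $\alpha$, as it must be: for fixed nonzero average gradients the identity $\sum_i v_i^{-\alpha}g_i^x=0$ cannot hold simultaneously for all $\alpha$; by contrast the freezing is uniform in $\gamma_x,\gamma_y$ and in $\beta$, which is the practically meaningful universality.
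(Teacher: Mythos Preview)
Your approach is correct and establishes the theorem, but it differs from the paper's construction in two essential ways. The paper uses three nodes on a complete graph with $\mathcal{Y}=\mathbb{R}$ (no projection), and arranges \emph{both} the $x$- and $y$-preconditioned gradients to cancel algebraically: with $a=2^{-1/(2\alpha-1)}$, $b=2^{-1/(2\beta-1)}$ and homogeneous quadratics, initialisation on the line $y=-\frac{ab+b}{ab+a}x$ yields $\sum_i v_{i,k}^{-\alpha}g_i^x=0$ and $\sum_i u_{i,k}^{-\beta}g_i^y=0$ simultaneously. Three nodes are needed precisely because, with only two nodes and purely algebraic $y$-cancellation, one is forced into $|g_1^y|=|g_2^y|$ and hence $\nabla_y f=0$, killing the ``$\|\nabla_y f\|$ arbitrarily large'' clause. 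You sidestep this by freezing $y$ through projection at the boundary of $\mathcal{Y}$ rather than through cancellation; this lets you work with two nodes and makes the instance independent of $\beta$ (indeed it works for all $\beta\in(0,1)$, not only $\beta<\tfrac12$), at the cost of relying on the constraint set.

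One point to tighten: the theorem's clause ``arbitrarily large depending on the initialization'' is most naturally read as a single fixed problem with a family of initial points. Your ``common rescaling of all four numbers'' needs to be realised by moving the initial point, not by changing the functions. This works cleanly if you take homogeneous quadratics $f_i(x,y)=\tfrac12 A_i x^2+B_i xy-\tfrac{\mu}{2}y^2$ with $B_i>0$, $|B_i|\ge|A_i|$, $A_1/A_2=-(B_1/B_2)^{2\alpha}$, set $\mathcal{Y}=[-M,0]$, and initialise at $(x_0,0)$ with $x_0>0$ arbitrary: all four gradients are linear in $x_0$, so every ratio (hence the cancellation identity and the $\max$ selection) is preserved along the ray, while $\|\nabla_x f\|,\|\nabla_y f\|\to\infty$ as $x_0\to\infty$. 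Make this explicit and the argument is complete.
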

\begin{Rem}
  The counterexample we constructed consists of three nodes, forming a complete graph. Without the stepsize tracking, D-TiAda will remain stationary, and the iterates will not progress if initiated along a specific line. In this counterexample, the only stationary point is at $(0,0)$, but initial points along the line (c.f., Eq. \eqref{Eq_stationary_points}) can be positioned arbitrarily far away from this stationary point, implying the non-convergence of D-TiAda with certain initialization. 
\end{Rem}

Apart from the counterexample discussed in Theorem \ref{thm:nonconverge_tiada}, 
we also experimentally observe the divergence of D-SGDA and D-TiAda
even in a simple scenario involving only two connected agents.
This phenomenon is illustrated in
Figure~\ref{Fig_case_study} and the functions are depicted as follows:
\begin{equation}\label{Eq_case_study}
\begin{aligned}
&f_1\left( x,y \right) =-\frac{9}{20}y^2+\frac{3}{5}y-x+xy-\frac{1}{2}x^2,
\\
&f_2\left( x,y \right) =-\frac{9}{20}y^2+\frac{3}{5}y-x+2xy-2x^2.
\end{aligned}
\end{equation}
It is not difficult to verify that the points on the line $3y=5x+2$ are stationary points of $f\left( x,y \right) =1/2\left( f_1\left( x,y \right) +f_2\left( x,y \right) \right)$.
It follows from Figure \ref{Fig_case_study}(a) and \ref{Fig_case_study}(b)  that D-SGDA does not converge to a stationary point because of the lack of time-scale separation, and D-TiAda also fails to converge due to stepsize inconsistency, as shown in Figure \ref{Fig_case_study}(c).
In contrast, the utilization of the stepsize tracking protocol in {\alg} ensures convergence to a stationary point, with the inconsistency in stepsizes gradually diminishing (c.f., Lemma \ref{Lem_converge_incons_formal}). These two motivating examples effectively highlight the challenges associated with applying adaptive minimax algorithms to distributed settings.

\subsection{The Proposed {\alg} Algorithm}

To address the issue of stepsize inconsistency across different nodes,
we propose the following \underline{D}istributed \underline{Ada}ptive minimax
optimization algorithm with \underline{S}tepsize \underline{T}racking protocol, termed {\alg}, which
allows us to asymptotically eliminate the stepsize inconsistency in a decentralized manner over networks. The pseudo-code for the algorithm is
summarized in Algorithm \ref{Alg_DASSC_formal}, and can be rewritten in a
compact form as follows:
\begin{subequations}\label{Alg_DASSC_compact}
\begin{align}
\mathbf{m}_{k+1}^{x}&=W\left( \mathbf{m}_{k}^{x}+\mathbf{h}_{k}^{x} \right) , \label{Eq_grad_sum_x}
\\
\mathbf{m}_{k+1}^{y}&=W\left( \mathbf{m}_{k}^{y}+\mathbf{h}_{k}^{y} \right) , \label{Eq_grad_sum_y}
\\
\mathbf{x}_{k+1}&=W\left( \mathbf{x}_k-\gamma _xV_{k+1}^{-\alpha} \nabla _xF\left( \mathbf{x}_k,\mathbf{y}_k;\xi_k^x \right) \right) ,
\\
\mathbf{y}_{k+1}&=\mathcal{P} _{\mathcal{Y}}\left( W\left( \mathbf{y}_k+\gamma _yU_{k+1}^{-\beta}\nabla _yF\left( \mathbf{x}_k,\mathbf{y}_k;\xi _{k}^{y} \right) \right) \right) ,
\end{align}
\end{subequations}
where $\mathbf{m}_{k}^{x}=[\cdots ,m_{i,k}^{x},\cdots ] ^T$, $\mathbf{m}_{k}^{y}=[ \cdots ,m_{i,k}^{y},\cdots ] ^T$ denote the tracking variables for the accumulated global gradient norm, i.e., for $z\in \left\{ x,y \right\}$,
\begin{equation*}
\begin{aligned}
\frac{\mathbf{1}^T}{n}\mathbf{m}_{k+1}^{z}=\frac{1}{n}\sum\nolimits_{i=1}^n{\left( \sum\nolimits_{t=0}^k{\left\| g_{i,t}^{z} \right\| ^2}+m_{i,0}^{z} \right)}
\end{aligned}
\end{equation*}
while $\boldsymbol{h}_{k}^{z}=[ \cdots ,\parallel g_{i,k}^{z}\parallel ^2,\cdots ] ^T$,
and
$V_k, U_k$ are diagonal matrices with $v_{i,k}=\max \left\{ m_{i,k}^{x},m_{i,k}^{y} \right\}$ and $u_{i,k}=m_{i,k}^{x}$. 
Note that we also provide a variant of {\alg} with coordinate-wise adaptive stepsizes 
in Algorithm \ref{Alg_D_TiAda_SC_II}, along with its  convergence analysis in Appendix \ref{Extend_to_OptionII}.

\begin{algorithm}[tb]
\caption{\textbf{Distributed Adaptive Minimax Method with Stepsize Tracking ({\alg})}}
\begin{minipage}{\linewidth}
\begin{algorithmic}[1]\label{Alg_DASSC_formal}
\renewcommand{\algorithmicrequire}{\textbf{Initialization: }}
\REQUIRE {$x_{i,0}\in \mathbb{R} ^p$, $y_{i,0}\in \mathcal{Y}$, buffers $m_{i,0}^{x}=m_{i,0}^{y}=c >0$, stepsizes $\gamma _x, \gamma _y>0$, exponential factors $0<\beta<\alpha < 1$ and weight matrix $W$.
}
\
\FOR{iteration $k = 0,1,\cdots$, each node $i\in[n]$,}

\STATE Sample i.i.d. $g_{i,k}^{x}=\nabla _x F_i\left( x_{i,k},y_{i,k};\xi _{i,k}^{x} \right)$ and $g_{i,k}^{y}=\nabla _y F_i\left( x_{i,k},y_{i,k};\xi _{i,k}^{y} \right)$.

\STATE Accumulate the gradient norm:
\vspace{-0.1cm}
\[
m_{i,k+1}^{x}=m_{i,k}^{x}+\|g_{i,k}^{x}\|^2 ,\,\,m_{i,k+1}^{y}=m_{i,k}^{y}+\|g_{i,k}^{y} \|^2.
\]
\vspace{-0.4cm}

\STATE Compute the ratio: 
\vspace{-0.1cm}
\[
\psi _{i,k+1}=(m_{i,k+1}^{x})^{\alpha}/\max \left\{ (m_{i,k+1}^{x})^{\alpha},(m_{i,k+1}^{y})^{\alpha} \right\}\leqslant 1.
\]
\vspace{-0.4cm}

\STATE Update primal and dual variables locally:
\vspace{-0.1cm}
\begin{equation*}
\begin{aligned}
&x_{i,k+1}=x_{i,k}-\gamma _x\psi _{i,k+1}\left( m_{i,k+1}^{x} \right) ^{-\alpha} g_{i,k}^{x},\,\,
y_{i,k+1}=y_{i,k}+\gamma _y( m_{i,k+1}^{y} ) ^{-\beta} g_{i,k}^{y}.
\end{aligned}
\end{equation*}
\vspace{-0.1cm}

\STATE Communicate adaptive stepsizes and decision variables with neighbors:
\begin{equation*}
\begin{aligned}
&\left\{ m_{i,k+1}^{x}, m_{i,k+1}^{y}, x_{i,k+1}, y_{i,k+1} \right\} 
\gets \sum_{j\in \mathcal{N} _i}{W_{i,j}\left\{ m_{j,k+1}^{x}, m_{j,k+1}^{y}, x_{j,k+1}, y_{j,k+1} \right\}}.
\end{aligned}
\end{equation*}

\STATE Projection of dual variable on the set $\mathcal{Y}$: $y_{i,k+1}\gets \mathcal{P} _{\mathcal{Y}}\left( y_{i,k+1} \right)$.
\ENDFOR
\end{algorithmic}
\end{minipage}
\end{algorithm}

\section{Convergence Analysis}
\label{Sec_convergence_analysis}
In this section, we present the main convergence results for the proposed {{\alg}} algorithm and compare it with D-TiAda to show the effectiveness of the proposed stepsize tracking protocol. 

\vspace{0.1cm}
To this end, letting $\bar{u}_k:=1/n\sum_{i=1}^n{u_{i,k}}$, we define the following metrics to evaluate the level of inconsistency of stepsizes among nodes, which are ensured to be bounded by
Assumption \ref{Ass_bound_gra}.
\begin{equation}\label{Def_hete_stepsize}
\begin{aligned}
\zeta _{v}^{2}:=\underset{i\in \left[ n \right] , k>0}{\mathrm{sup}}\left\{ \left( v_{i,k}^{-\alpha}-\bar{v}_{k}^{-\alpha} \right) ^2/\left( \bar{v}_{k}^{-\alpha} \right) ^2 \right\} ,\,\,
\zeta _{u}^{2}:=\underset{i\in \left[ n \right] , k>0}{\mathrm{sup}}\left\{ \left( u_{i,k}^{-\beta}-\bar{u}_{k}^{-\beta} \right) ^2/\left( \bar{u}_{k}^{-\beta} \right) ^2 \right\}.
\end{aligned}
\end{equation}

\subsection{Assumptions}
We consider the NC-SC setting of Problem (\ref{Prob_minimax}) with the following assumptions that are commonly used in the existing works (c.f., Remark~\ref{Rem_ass_prob} and Remark~\ref{Rem_ass_bounded_grad}).

\begin{Ass}[{{$\mu$-strong concavity in $y$}}]\label{Ass_SC_y}
Each objective function $f_i\left( x, y \right) $ is $\mu$-strongly concave in $y$, i.e., $\forall x\in \mathbb{R}^p$, $\forall y, y^{\prime} \in \mathcal{Y}$ and $\mu>0$,
\begin{equation}
\begin{aligned}
&f_i\left( x,y \right) -f_i\left( x,y^{\prime} \right) 
\geqslant \left< \nabla_y f_i\left( x,y \right) , y-y^{\prime} \right> +\frac{\mu}{2}\left\| \,\,y-y^{\prime} \right\| ^2.
\end{aligned}
\end{equation}
\end{Ass}

\begin{Ass}[{{Joint smoothness}}]\label{Ass_joint_smo}
Each objective function $f_i\left( x, y \right) $ is $L$-smooth in $x$ and $y$, i.e., $\forall x, x^{\prime} \in \mathbb{R}^p$ and $\forall y, y^{\prime} \in \mathcal{Y}$, there exists a constant $L$ such that for $z\in \left\{ x,y \right\}$,
\begin{equation}
\begin{aligned}
\left\| \nabla_z f_i\left( x,y \right) -\nabla_z f_i\left( x^{\prime},y^{\prime} \right) \right\| ^2
\leqslant L^2\left( \left\| x-x^{\prime} \right\| ^2+\left\| y-y^{\prime} \right\| ^2 \right).
\end{aligned}
\end{equation}
Furthermore, $f_i$ is second-order Lipschitz continuous for $y$, i.e., for $z\in \left\{ x,y \right\}$,
\begin{equation}
\begin{aligned}
&\left\| \nabla _{zy}^{2}f_i\left( x,y \right) -\nabla _{zy}^{2}f_i\left( x^{\prime},y^{\prime} \right) \right\| ^2
\leqslant L^2\left( \left\| x-x^{\prime} \right\| ^2+\left\| y-y^{\prime} \right\| ^2 \right).
\end{aligned}
\end{equation}
\end{Ass}

\begin{Rem}\label{Rem_ass_prob}
   Assumption \ref{Ass_SC_y} does not require the convexity in $x$ and the objective function thus can be nonconvex. Assumption \ref{Ass_SC_y} and \ref{Ass_joint_smo} ensure that $y^{*}(\cdot)$ is smooth (c.f., Lemma \ref{Lem_envelope}), which is essential for achieving (near) optimal convergence rate \citep{chen2021closing, li2023tiada}. 
\end{Rem}

\begin{Ass}[Stochastic gradient]\label{Ass_bound_gra}
For i.i.d. sample $\xi_{i}$, the stochastic gradient of each $i$ is unbiased, i.e., $\forall x \in \mathbb{R}^p, y \in \mathcal{Y}$, $\mathbb{E} _{\xi _i}\left[ \nabla_z F_i\left( x,y;\xi _i \right) \right] =\nabla_z f_i\left( x,y \right) $, for $z\in \left\{ x,y \right\} $, and there is a constant $C>0$ such that $\left\| \nabla_z F_i\left( x,y;\xi _i \right) \right\| \leqslant C$.
\end{Ass}

\begin{Rem}\label{Rem_ass_bounded_grad} 
  Assumption \ref{Ass_bound_gra} on stochastic gradient is widely used for establishing convergence rates of both minimization and minimax optimization methods with AdaGrad \citep{kavis2022high, li2023tiada} or Adam \citep{zou2019sufficient, chen2023efficient, huang2024adaptive} adaptive stepsize. We note that under Assumption \ref{Ass_joint_smo}, this assumption can be easily satisfied in many real-world tasks by imposing constraints on the compact domain of $f$, e.g., neural networks with rectified activation \citep{dinh2017sharp} and GANs with projections on the critic \citep{gulrajani2017improved}.
\end{Rem}
Next, we make the following assumption on the underlying graph to ensure its connectivity.
\begin{Ass}[Graph connectivity] \label{Ass_graph}
The weight matrix $W$ induced by graph $\mathcal{G}$ is doubly stochastic, i.e., $W\mathbf{1}=\mathbf{1},\mathbf{1}^TW=\mathbf{1}^T$ and $\rho _W:= \left\| W-\mathbf{J} \right\|_2 ^2 <1.$
\end{Ass}
Note that one can always find a proper weight matrix $W$ compliant to the graph that satisfies Assumption \ref{Ass_graph} once the underlying graph is undirected and connected. For instance, the weight matrix can be easily determined based on the Metropolis-Hastings protocol \citep{xiao2006distributed}. Moreover, this assumption is more general than that in \citet{lian2017can, borodich2021decentralized} in the sense that $W$ is not required to be symmetric, implying that certain directed graphs can be included in this assumption, e.g., directed ring and exponential graphs \citep{ying2021exponential}.

\subsection{Main Results}

We are now ready to present the key convergence results in terms of the primal function $\varPhi \left( x \right) :=f\left( x,y^*\left( x \right) \right) $ with $y^*\left( x \right) =\mathrm{arg} \underset{y\in \mathcal{Y}}{\max}f\left( x,y \right)$, whose proofs can be found in Appendix \ref{App_proof_of_Thm2}.

\begin{Thm}\label{Thm_con_dassc}
Suppose Assumption \ref{Ass_SC_y}-\ref{Ass_graph} hold. Let $0<\beta < \alpha <1$ and the total iteration $K$ satisfy
\begin{equation}\label{Eq_K_conditions}
\begin{aligned}
\Omega \left( \max \left\{ \left( \frac{\gamma _{x}^{2}\kappa ^4}{\gamma _{y}^{2}} \right) ^{\frac{1}{\alpha -\beta}},\,\, \left( \frac{1}{\left( 1-\rho _W \right) ^2} \right) ^{\max \left\{ \frac{1}{\alpha},\frac{1}{\beta} \right\}} \right\} \right)
\end{aligned}
\end{equation} 
 with $\kappa:=L/\mu$ to ensure time-scale separation and quasi-independence of the network.
For {\alg}, we have
\begin{equation}
\begin{aligned}
&\frac{1}{K}\sum_{k=0}^{K-1}{\mathbb{E} \left[ \left\| \nabla \varPhi \left( \bar{x}_k \right) \right\| ^2 \right]}=\tilde{\mathcal{O}}\left( \frac{1}{K^{1-\alpha}}+\frac{1}{\left( 1-\rho _W \right) ^{\alpha}K^{\alpha}} \right) 
+\tilde{\mathcal{O}}\left( \frac{1}{K^{1-\beta}}+\frac{1}{\left( 1-\rho _W \right) K^{\beta}} \right) .
\end{aligned}
\end{equation}
\end{Thm}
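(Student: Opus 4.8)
The plan is to analyze a Lyapunov function that couples the primal decrease of $\varPhi$ with the dual optimality gap, while separately controlling the two distributed error sources: the consensus error of the decision variables and the stepsize inconsistency handled by the tracking protocol. Writing $\bar{x}_k = \mathbf{1}^T \mathbf{x}_k / n$ and $\bar{y}_k = \mathbf{1}^T \mathbf{y}_k/n$, I would track the potential
$$\mathcal{V}_k := \varPhi(\bar{x}_k) + \lambda \, \mathbb{E}\left[\left\| \bar{y}_k - y^*(\bar{x}_k)\right\|^2\right]$$
for a weight $\lambda = \Theta(\kappa^2)$ chosen to absorb the primal--dual coupling, and derive a per-step inequality whose residual terms are summable after telescoping and dividing by $K$.

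First I would establish the primal descent. By the envelope lemma (Lemma~\ref{Lem_envelope}), $\varPhi$ is $L_\varPhi$-smooth with $L_\varPhi = \mathcal{O}(L\kappa)$ and $\nabla \varPhi(\bar{x}_k) = \nabla_x f(\bar{x}_k, y^*(\bar{x}_k))$. Applying smoothness to the averaged primal update and using the decomposition in \eqref{Eq_show_inconsistancy} splits the step into an ``adaptive descent'' part driven by the consistent stepsize $\bar{v}_{k+1}^{-\alpha}$ and an ``inconsistency'' part proportional to $\boldsymbol{\tilde{v}}_{k+1}^{-\alpha}$. The dominant cross term $-\gamma_x \bar{v}_{k+1}^{-\alpha}\langle \nabla\varPhi(\bar{x}_k), \tfrac1n\sum_i g_{i,k}^x\rangle$ is converted into $-\gamma_x\bar{v}_{k+1}^{-\alpha}\|\nabla\varPhi(\bar{x}_k)\|^2$ plus errors measuring how far $\tfrac1n\sum_i \nabla_x f_i(x_{i,k},y_{i,k})$ is from $\nabla_x f(\bar{x}_k, y^*(\bar{x}_k))$; by Assumption~\ref{Ass_joint_smo} these are bounded by the primal consensus error and the dual gap. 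The inconsistency part is controlled by $\zeta_v^2$ from \eqref{Def_hete_stepsize}, which Lemma~\ref{Lem_converge_incons_formal} guarantees to be diminishing under the tracking protocol; this is precisely the term that obstructs convergence in D-TiAda (Theorem~\ref{thm:nonconverge_tiada}).

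Next I would control the dual gap by a contraction argument. Since $f(\bar{x}_k,\cdot)$ is $\mu$-strongly concave, an approximate ascent step with effective stepsize $\gamma_y \bar{u}_{k+1}^{-\beta}$ yields $\|\bar{y}_{k+1}-y^*(\bar{x}_{k+1})\|^2 \le (1-\Theta(\mu\gamma_y\bar{u}_{k+1}^{-\beta}))\|\bar{y}_k - y^*(\bar{x}_k)\|^2$ plus a drift of order $\mathcal{O}(\kappa^2\|\bar{x}_{k+1}-\bar{x}_k\|^2)$ arising from the movement of $y^*$, the dual consensus error, and the gradient noise. The drift is where \textbf{time-scale separation} enters: the ratio of effective stepsizes is $\gamma_x\bar{v}_{k+1}^{-\alpha}/(\gamma_y\bar{u}_{k+1}^{-\beta})$, and since $\alpha>\beta$ with the accumulators growing, the condition $K = \Omega((\gamma_x^2\kappa^4/\gamma_y^2)^{1/(\alpha-\beta)})$ forces this ratio below the threshold needed for the contraction to dominate. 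The consensus errors $\|\mathbf{x}_k - \mathbf{1}\bar{x}_k\|^2$ and $\|\mathbf{y}_k - \mathbf{1}\bar{y}_k\|^2$ satisfy linear recursions with contraction factor $\rho_W$ driven by the bounded gradients (Assumption~\ref{Ass_bound_gra}); unrolling them gives bounds of order $(1-\rho_W)^{-2}$ times the squared effective stepsize, and the \textbf{quasi-independence} condition $K = \Omega((1-\rho_W)^{-2\max\{1/\alpha,1/\beta\}})$ ensures these are dominated by the optimization terms. Telescoping the combined potential, invoking the AdaGrad-type summation inequality $\sum_k a_k (\sum_{j\le k} a_j)^{-\alpha} = \mathcal{O}((\sum_k a_k)^{1-\alpha})$ separately for the $\alpha$- and $\beta$-scaled sums, and using $\bar{v}_k,\bar{u}_k = \mathcal{O}(k)$ then produces the stated $K^{-(1-\alpha)}$, $K^{-(1-\beta)}$ and consensus rates.

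The main obstacle I anticipate is the statistical coupling between the adaptive preconditioners $\bar{v}_{k+1}^{-\alpha}, \bar{u}_{k+1}^{-\beta}$ and the stochastic gradients they multiply: because both depend on $\xi_k$, one cannot directly take conditional expectation inside $\mathbb{E}[\bar{v}_{k+1}^{-\alpha}\langle \nabla\varPhi(\bar{x}_k), \tfrac1n\sum_i g_{i,k}^x\rangle]$. I would resolve this by the standard device of inserting the one-step-stale preconditioner $\bar{v}_k^{-\alpha}$ (measurable before $\xi_k$), bounding the increment $|\bar{v}_{k+1}^{-\alpha}-\bar{v}_k^{-\alpha}|$ via the bounded-gradient assumption, and absorbing the resulting higher-order error into the summable residual. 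This difficulty is compounded in the distributed setting because the consensus error, the inconsistencies $\zeta_v^2,\zeta_u^2$, and the dual gap form a mutually coupled feedback loop; closing this loop simultaneously, rather than bounding each term in isolation, is the delicate part of the argument and is exactly what the two transient-time conditions on $K$ are designed to make tractable.
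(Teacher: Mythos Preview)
Your decomposition into primal descent, dual gap, consensus error, and stepsize inconsistency matches the paper's structure, but the mechanism you propose for coupling the first two pieces---a fixed-weight Lyapunov $\varPhi(\bar{x}_k)+\lambda\|\bar{y}_k-y^*(\bar{x}_k)\|^2$ with $\lambda=\Theta(\kappa^2)$---would not close. The primal ``gain'' on $\|\nabla\varPhi\|^2$ is $\gamma_x\bar{v}_{k+1}^{-\alpha}$ while the dual contraction on $\|\bar{y}_k-y^*(\bar{x}_k)\|^2$ is $\Theta(\mu\gamma_y\bar{u}_{k+1}^{-\beta})$; both are \emph{random and time-varying on different scales}, so no fixed $\lambda$ produces a telescoping potential. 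The paper sidesteps this by dividing the smoothness inequality by $\gamma_x\bar{v}_{k+1}^{-\alpha}$ \emph{before} taking expectation, which makes the coefficient in front of $\|\nabla\varPhi(\bar{x}_k)\|^2$ constant. The resulting telescoped sum $\sum_k\mathbb{E}[(\varPhi(\bar{x}_k)-\varPhi(\bar{x}_{k+1}))/\bar{v}_{k+1}^{-\alpha}]$ is then bounded using $\varPhi^{\max}-\varPhi^*$ and the monotonicity of $\bar{v}_k$, which also obviates the stale-preconditioner trick you propose for handling the correlation between $\bar{v}_{k+1}$ and $g_{i,k}^x$.

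The dual gap $S_3=\frac{1}{K}\sum_k\mathbb{E}[f(\bar{x}_k,y^*(\bar{x}_k))-f(\bar{x}_k,\bar{y}_k)]$ is not bounded by a single contraction either, but by a \emph{two-phase} argument: one fixes a threshold $G=\Theta((\gamma_x^2\kappa^4/\gamma_y^2)^{1/(2\alpha-2\beta)})$ and treats iterations with $\bar{u}_k\le G$ (phase~I) and $\bar{u}_k>G$ (phase~II) separately. In phase~II the ratio $\bar{v}_{k+1}^{-2\alpha}/\bar{u}_{k+1}^{-2\beta}\le G^{-(2\alpha-2\beta)}$ is small enough that the drift $\kappa^2\|\bar{x}_{k+1}-\bar{x}_k\|^2$ is absorbed by $\|\nabla_x f(\bar{x}_k,\bar{y}_k)\|^2$, which is exactly the term kept with a negative sign in the descent lemma. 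The contraction-vs-drift balance you mention actually fails for a \emph{constant} number of early iterations (when $\bar{u}_{k+1}^\beta-\bar{u}_k^\beta>\mu\gamma_y/4$), and the paper shows via a Bernoulli-inequality argument that this set has bounded cardinality; this is the content of Lemma~\ref{Lem_S_1_k} and is what makes the $E_{1,k}$ term summable. Your proposal does not account for this transition regime, and without it the telescoping of the dual recursion leaves an uncontrolled residual.
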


\begin{Rem}[Near-optimal convergence]
Theorem \ref{Thm_con_dassc} implies that if the total number of iterations satisfies the conditions \eqref{Eq_K_conditions}, the proposed {\alg} algorithm converges to a stationary point exactly for Problem \eqref{Prob_minimax} with an $\tilde{\mathcal{O}}\left( \epsilon ^{-(4+\delta)} \right) $ sample complexity for arbitrarily small $\delta>0$, e.g., letting $\alpha =0.5+\delta /\left( 8+2\delta \right)$ and $\beta =0.5-\delta /\left( 8+2\delta \right) $. It is worth noting that this rate is near-optimal compared to the existing lower bound $\tilde{\mathcal{O}}\left( \epsilon ^{-4} \right) $ \citep{li2021complexity} for centralized minimax optimization problems, and recovers the centralized TiAda algorithm \citep{li2023tiada} as special case, i.e., setting $\rho_W=0$, without assuming the existence of interior optimal point (c.f., Assumption 3.3 \citet{li2023tiada}). To the best of our knowledge, there is no existing fully
parameter-agnostic method that achieves a convergence rate of $\tilde{\mathcal{O}}\left( \epsilon ^{-4} \right) $, even in
a centralized setting.
\end{Rem}

\begin{Rem}[Parameter-agnostic property and transient times]
The above results show that {\alg} converges without requiring to know any problem-dependent parameters, i.e., $L$, $\mu$ and $\rho_W$, or tuning the initial stepsize $\gamma_x$ and $\gamma_y$, and is thus parameter-agnostic. Moreover, we explicitly characterize the transient times (c.f., Eq.~\eqref{Eq_K_conditions}) that ensure time-scale separation and quasi-independence of the network, respectively. Indeed, we can see that if $\alpha$ and $\beta$ are close to each other, the time required for time-scale separation to occur increases significantly, which has been observed in \citep{li2023tiada}.
On the other hand, if $\alpha$ and $\beta$ are relatively large, then $\tilde{\mathcal{O}}\left( 1/K^{1-\alpha}+1/K^{1-\beta} \right)$ dominates the other terms, indicating independence on the network. These observations highlight the trade-offs between the convergence rate and the required duration of the transition phase.
\end{Rem}

For proper comparison, we also derive an upper bound for D-TiAda as follows. Together with the lower bound in Theorem \ref{thm:nonconverge_tiada}, we demonstrate that without the stepsize tracking mechanism, the inconsistency among local stepsizes prevents D-TiAda from converging in the distributed setting.
\begin{Cor}\label{Cor_dtiada}
Under the same conditions of Theorem \ref{Thm_con_dassc}. For the proposed D-TiAda,
we have
\begin{equation}\label{Eq_tiada}
\begin{aligned}
\frac{1}{K}\sum_{k=0}^{K-1}{\mathbb{E} \left[ \left\| \nabla \varPhi \left( \bar{x}_k \right) \right\| ^2 \right]}\,\,
&=\tilde{\mathcal{O}}\left( \frac{1}{K^{1-\alpha}}+\frac{1}{\left( 1-\rho _W \right) ^{\alpha}K^{\alpha}} \right) 
\\
&+\tilde{\mathcal{O}}\left( \frac{1}{K^{1-\beta}}+\frac{1}{\left( 1-\rho _W \right) K^{\beta}} \right) +\tilde{\mathcal{O}}\left( \left( \zeta _{v}^{2}+\kappa ^2\zeta _{u}^{2} \right) C^2 \right) .
\end{aligned}
\end{equation}   
\end{Cor}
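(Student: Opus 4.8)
The plan is to reuse the descent-based analysis that establishes Theorem \ref{Thm_con_dassc}, but now carry through the inconsistency contributions that the stepsize-tracking protocol of {\alg} would otherwise annihilate. The starting point is the one-step descent inequality for the primal function $\varPhi$, which is smooth by Lemma \ref{Lem_envelope}. Evaluating it along the averaged iterate $\bar{x}_k$ requires the averaged primal dynamics, which by \eqref{Eq_show_inconsistancy} splits into a genuine adaptive-descent term driven by $\bar{v}_k^{-\alpha}$ and an additive bias $\gamma_x (\boldsymbol{\tilde{v}}_{k+1}^{-\alpha})^T \nabla_x F / n$. The adaptive-descent term is treated exactly as in Theorem \ref{Thm_con_dassc}: combined with the consensus-error and dual-tracking bounds under the transient conditions \eqref{Eq_K_conditions}, it reproduces the vanishing quantities $\tilde{\mathcal{O}}(1/K^{1-\alpha} + 1/((1-\rho_W)^\alpha K^\alpha))$ and $\tilde{\mathcal{O}}(1/K^{1-\beta} + 1/((1-\rho_W) K^\beta))$. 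The only genuinely new work is to bound the cross term generated by the bias and to show it leaves a non-vanishing floor.

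I would first control the primal inconsistency. Pairing $\nabla \varPhi(\bar{x}_k)$ with the bias term and applying Young's inequality with weight $\gamma_x \bar{v}_k^{-\alpha}$, half is absorbed into the negative descent term $-\gamma_x \bar{v}_k^{-\alpha}\|\nabla \varPhi(\bar{x}_k)\|^2$, while the residual is controlled through the definition \eqref{Def_hete_stepsize} of $\zeta_v$, i.e.\ $\|\boldsymbol{\tilde{v}}_{k+1}^{-\alpha}\|^2 \le n\,\zeta_v^2 (\bar{v}_{k+1}^{-\alpha})^2$, together with the uniform gradient bound $\|\nabla_x F_i\| \le C$ from Assumption \ref{Ass_bound_gra}. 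The key accounting is that because the inconsistency is measured \emph{relative} to $\bar{v}_k^{-\alpha}$, the residual scales as $\gamma_x \bar{v}_k^{-\alpha} \zeta_v^2 C^2$, which is exactly the adaptive weighting of the gain term. Summing and using that $\bar{v}_k \sim k$ gives $\sum_k \gamma_x \bar{v}_k^{-\alpha} \zeta_v^2 C^2 = \tilde{\mathcal{O}}(\gamma_x \zeta_v^2 C^2 K^{1-\alpha})$; after telescoping the descent and normalizing by $\gamma_x \bar{v}_K^{-\alpha} K = \tilde{\Theta}(\gamma_x K^{1-\alpha})$, the $K^{1-\alpha}$ factors cancel and a constant $\tilde{\mathcal{O}}(\zeta_v^2 C^2)$ remains.

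Next I would propagate the dual inconsistency. The primal descent depends on the dual-tracking error $\|\bar{y}_k - y^*(\bar{x}_k)\|^2$, for which Assumptions \ref{Ass_SC_y} and \ref{Ass_joint_smo} supply a $\mu$-contraction along the dual update. In D-TiAda the averaged dual dynamics carries an analogous bias governed by $\zeta_u$; inserting it into the contraction recursion inflates the fixed point of the dual error by a term of order $\zeta_u^2 C^2$ (via the same relative-weighting cancellation on the $\beta$-side). Since the algorithm evaluates $\nabla_x f$ at $\bar{y}_k$ rather than $y^*(\bar{x}_k)$, smoothness converts this dual floor into a primal-gradient error; the two-timescale balancing characteristic of the NC-SC setting introduces the condition number $\kappa = L/\mu$, which enters squared in the primal bound, producing the $\tilde{\mathcal{O}}(\kappa^2 \zeta_u^2 C^2)$ contribution.

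Finally I would sum the one-step inequality over $k=0,\dots,K-1$, telescope the $\varPhi$ terms, and divide by $K$; the conditions \eqref{Eq_K_conditions} keep the consensus and dual cross terms subordinate exactly as in Theorem \ref{Thm_con_dassc}, and collecting everything yields \eqref{Eq_tiada}. I expect the main obstacle to be the second step: rigorously establishing that the bias produces a genuine constant floor rather than a summable term. This rests on showing that the relative inconsistencies $\zeta_v, \zeta_u$ stay bounded away from zero for D-TiAda — in sharp contrast with {\alg}, where the tracking protocol drives them to zero (c.f.\ Lemma \ref{Lem_converge_incons_formal}) — so that the adaptive normalization cannot be exploited to make the bias vanish, which is precisely what makes this upper bound compatible with the non-convergence lower bound of Theorem \ref{thm:nonconverge_tiada}.
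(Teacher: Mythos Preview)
Your approach is essentially the same as the paper's: rerun the proof of Theorem~\ref{Thm_con_dassc}, but bound the stepsize-inconsistency contributions by the constant $\zeta_v^2 C^2$ and $\zeta_u^2 C^2$ rather than by the vanishing quantities available under stepsize tracking. The paper's proof is in fact a single sentence, because the descent lemma (Lemma~\ref{Lem_decent_lem}) has already isolated the relevant quantity as the term $S_4 = \tfrac{1}{K}\sum_k \mathbb{E}\big[\big\|\tfrac{(\boldsymbol{\tilde v}_{k+1}^{-\alpha})^T}{n\bar v_{k+1}^{-\alpha}}\nabla_x F\big\|^2\big]$ (and its dual analogue inside $S_3$), and Lemma~\ref{Lem_converge_incons_dtiada} bounds these directly by $\zeta_v^2 C^2$ and $\zeta_u^2 C^2$. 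So you do not need to redo the Young-inequality splitting or argue a cancellation of $K^{1-\alpha}$ factors after normalizing by $\gamma_x \bar v_K^{-\alpha} K$; the paper's normalization (dividing the smoothness inequality by $\gamma_x \bar v_{k+1}^{-\alpha}$ \emph{before} summing) makes $S_4$ appear already in dimensionless form, so the constant floor falls out immediately.

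One correction to your final paragraph: for Corollary~\ref{Cor_dtiada} you are proving an \emph{upper} bound, so you do not need to ``rigorously establish that the bias produces a genuine constant floor'' or that $\zeta_v,\zeta_u$ stay bounded away from zero. The bound $S_4 \le \zeta_v^2 C^2$ holds regardless; if $\zeta_v$ happened to be small the upper bound would simply be small. The statement that D-TiAda can actually fail to converge is the content of the separate lower-bound Theorem~\ref{thm:nonconverge_tiada}, not of this corollary.
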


\section{Experiments}\label{Sec_experiments}
In this section, we conduct experiments to validate the theoretical findings and demonstrate the effectiveness of the proposed algorithm on real-world machine learning tasks. We compare the proposed {\alg} with the distributed variants of AdaGrad \citep{duchi2011adaptive}, TiAda \citep{li2023tiada} and NeAda \citep{yang2022nest}, namely D-AdaGrad, D-TiAda and D-NeAda, respectively. These experiments run across multiple nodes with different networks, and we consider heterogeneous distributions of local objective functions/datasets. For example, each node can only access samples with a subset of labels on MNIST and CIFAR-10 datasets, which is a common scenario in decentralized and federated learning tasks \citep{sharma2023federated, pmlr-v162-huang22i}. The experiments cover three main tasks: synthetic function, robust training of the neural network, and training of Wasserstein GANs \citep{heusel2017gans}. For the exponential factors of stepsize, we set $\alpha = 0.6$ and $\beta = 0.4$ for both D-TiAda and {\alg}. More detailed settings and additional experiments with different initial stepsizes, data distributions and choices of $\alpha$ and $\beta$ can be found in Appendix \ref{App_Additional Experiments}. 

\textbf{Synthetic example.} We consider a distributed minimax problem with the following NC-SC local objective functions over exponential networks with $n=50$ ($\rho_w=0.71$) and $n=100$ ($\rho_w=0.75$). 
\begin{equation}\label{Eq_obj_synthetic}
\begin{aligned}
f_i\left( x,y \right) =-\frac{1}{2}y^2+L_ixy-\frac{L_{i}^{2}}{2}x^2-2L_ix+L_iy,
\end{aligned}
\end{equation}
where $L_i\sim \mathcal{U} \left( 1.5, 2.5 \right)$. The local gradient of each node is computed with an additive $\mathcal{N} \left( 0,0.1 \right) $ Gaussian noise. It follows from Figure \ref{Fig_test_func} (a) and \ref{Fig_test_func} (b) that the proposed {\alg} algorithm outperforms other distributed adaptive methods for both initial stepsize settings, especially in cases with a favorable initial stepsize ratio, as illustrated in plots (b) and (d) where $\gamma_x/\gamma_y=0.2$. Similar observation can be found in Figure \ref{Fig_test_func} (c) and \ref{Fig_test_func} (d), demonstrating the effectiveness of {\alg}.

\begin{figure*}[!tb]
    \centering
    \subfloat[{$\gamma_x=0.1, n=50$}]
    {
        \begin{minipage}[t]{0.24\textwidth}
            \centering    
            \includegraphics[width=\textwidth]{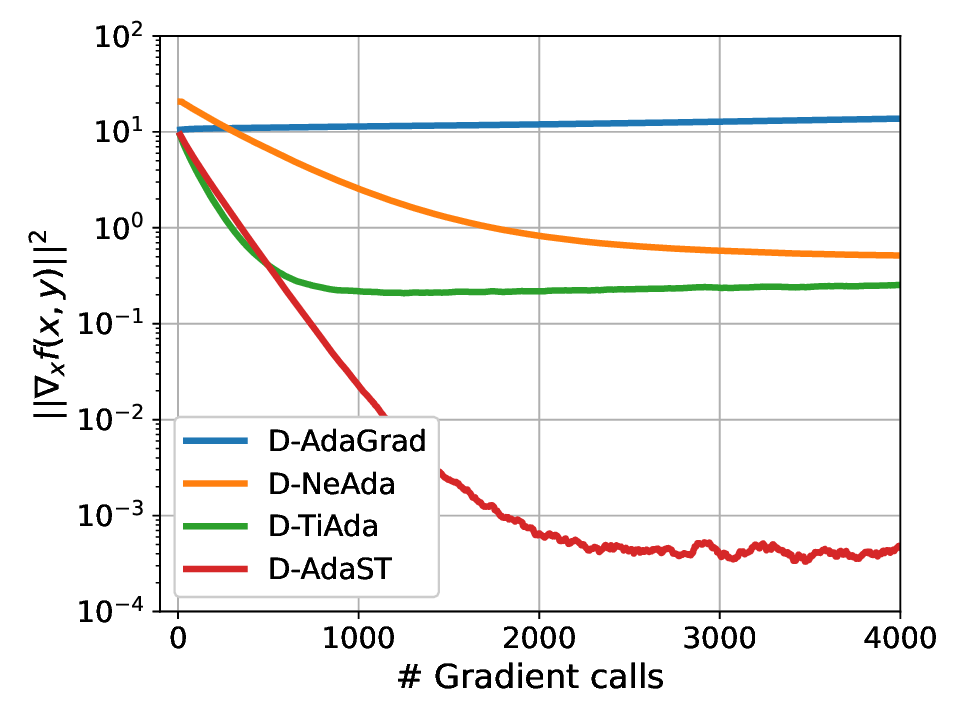} 
        \end{minipage}
    } 
    \subfloat[$\gamma_x=0.02, n=50$] 
    {
        \begin{minipage}[t]{0.24\textwidth}
            \centering        
            \includegraphics[width=\textwidth]{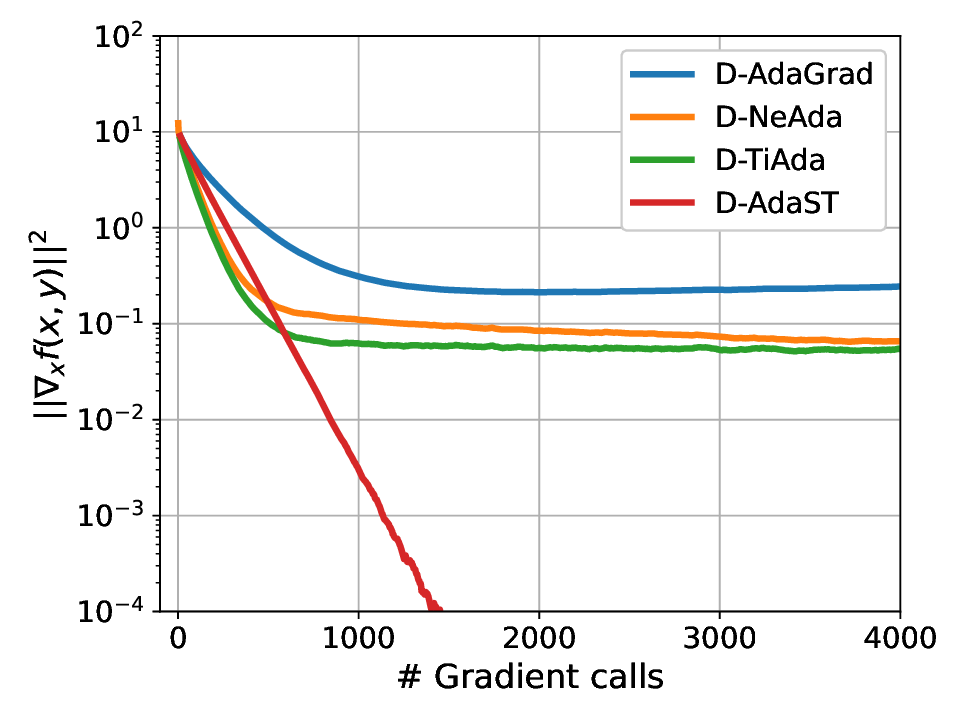} 
        \end{minipage}
    } 
    \subfloat[$\gamma_x=0.1, n=100$]
    {
        \begin{minipage}[t]{0.24\textwidth}
            \centering  
            \includegraphics[width=\textwidth]{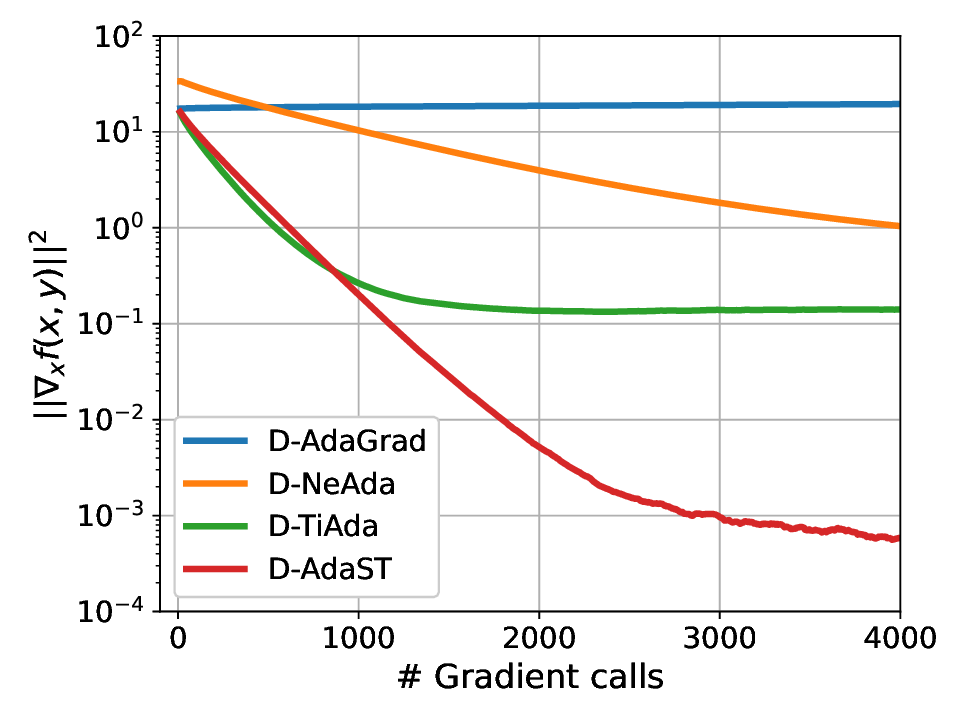}  
        \end{minipage}
    } 
    \subfloat[$\gamma_x=0.02, n=100$] 
    {
        \begin{minipage}[t]{0.24\textwidth}
            \centering    
            \includegraphics[width=\textwidth]{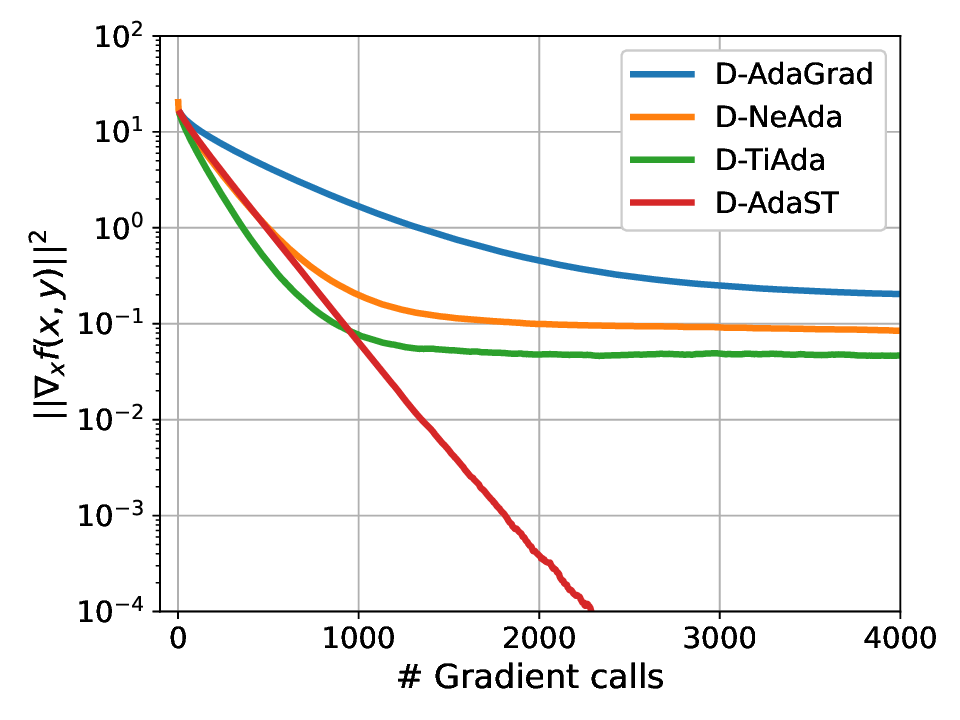} 
        \end{minipage}
    } 
    \caption{Performance comparison of algorithms on quadratic functions over exponential graphs with node counts $n=\left\{ 50,100 \right\}$ and \textit{different initial stepsizes} ($\gamma_y=0.1$).}
    \label{Fig_test_func}
    \vspace{-0.4cm}
\end{figure*}

\begin{figure*}[!tb]
    \centering
    \subfloat[ring, $\rho _W=0.97$]
    {
        \begin{minipage}[t]{0.24\textwidth}
            \centering 
            \includegraphics[width=\textwidth]{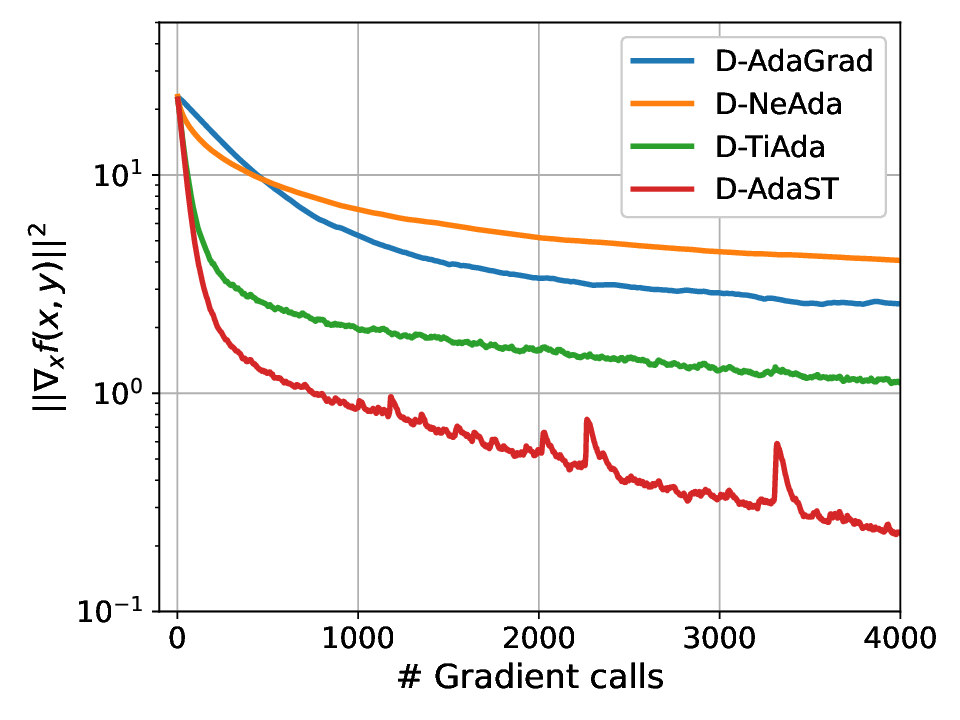} 
        \end{minipage}
    }
    \subfloat[exp., $\rho _W=0.67$] 
    {
        \begin{minipage}[t]{0.24\textwidth}
            \centering  
            \includegraphics[width=\textwidth]{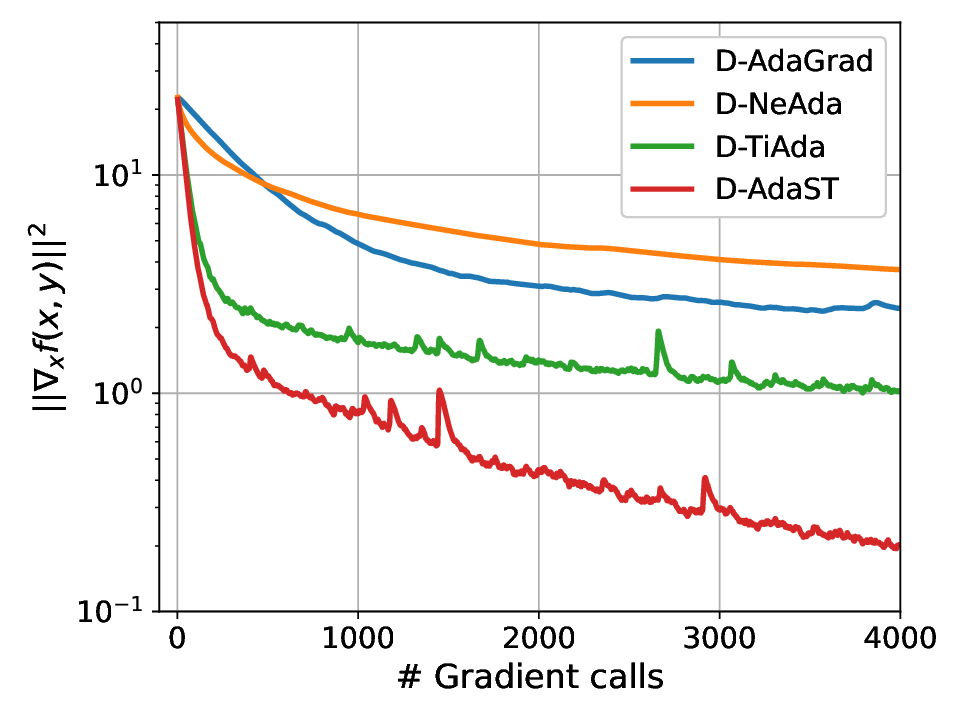}  
        \end{minipage}
    }
    \subfloat[dense, $\rho _W=0.55$] 
    {
        \begin{minipage}[t]{0.24\textwidth}
            \centering   
            \includegraphics[width=\textwidth]{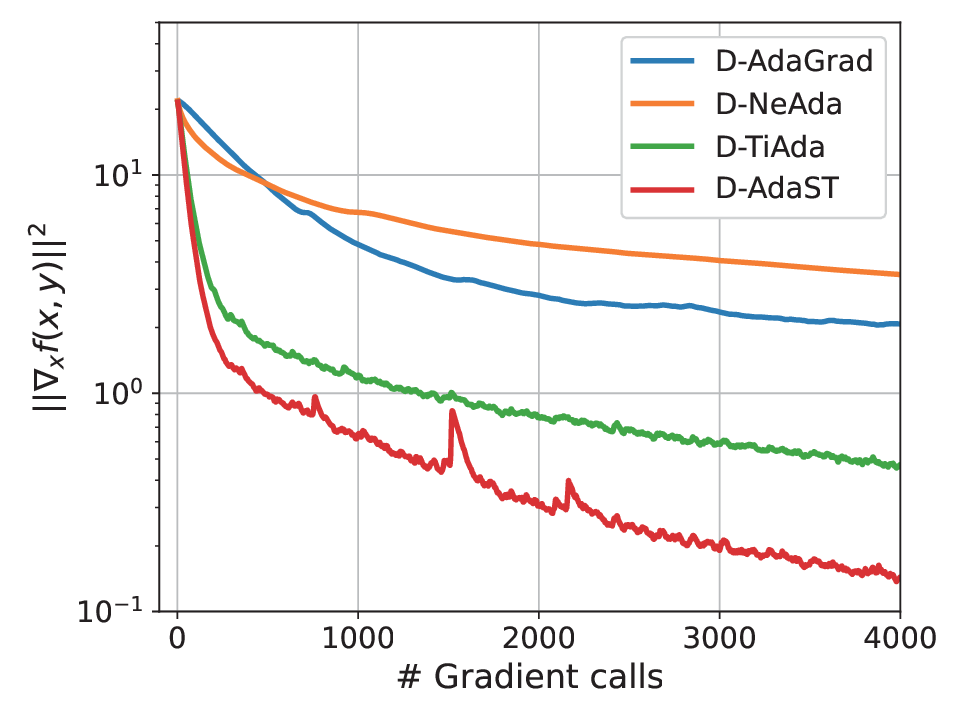} 
        \end{minipage}
    }
    \subfloat[scalability] 
    {
        \begin{minipage}[t]{0.24\textwidth}
            \centering 
            \includegraphics[width=\textwidth]{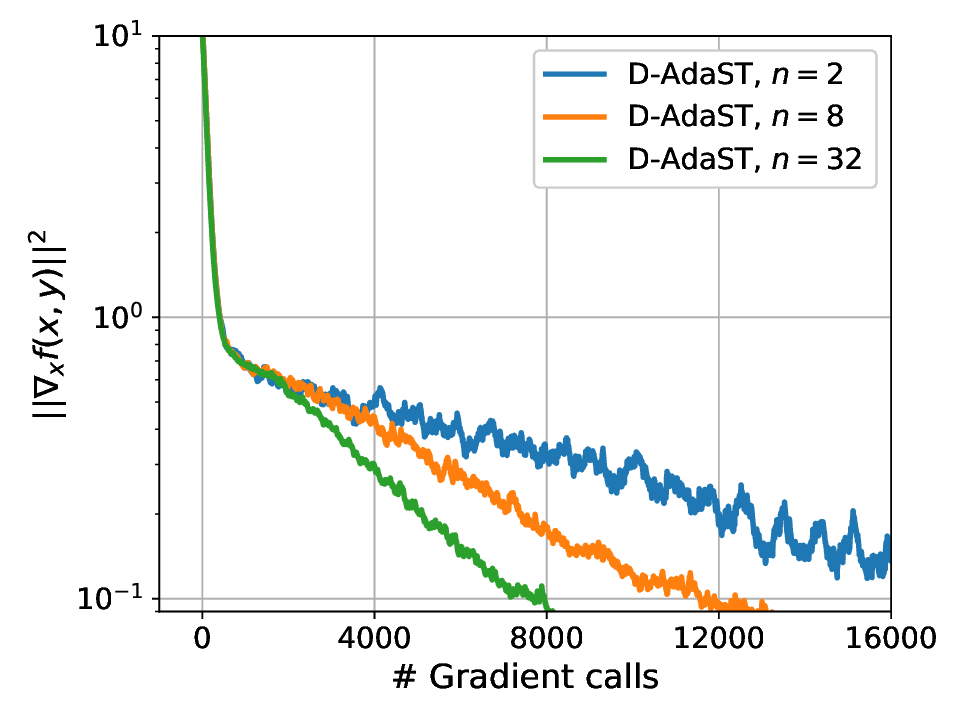} 
        \end{minipage}
    }

    \subfloat[ring, $\rho _W=0.97$] 
    {
        \begin{minipage}[t]{0.24\textwidth}
            \centering 
            \includegraphics[width=\textwidth]{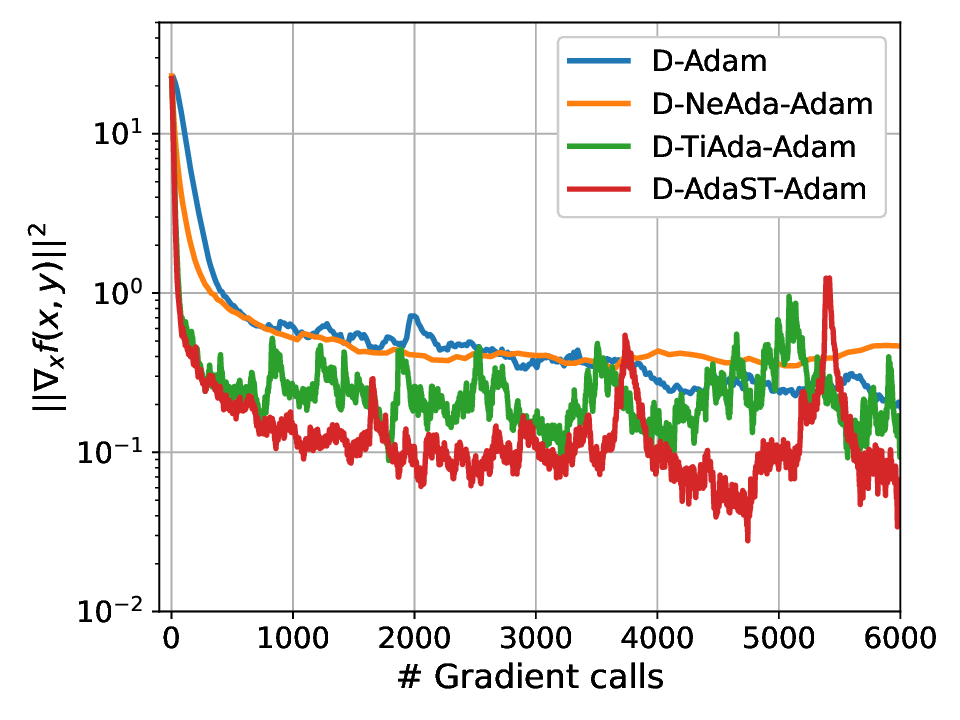}  
        \end{minipage}
    }
    \subfloat[exp., $\rho _W=0.67$] 
    {
        \begin{minipage}[t]{0.24\textwidth}
            \centering 
            \includegraphics[width=\textwidth]{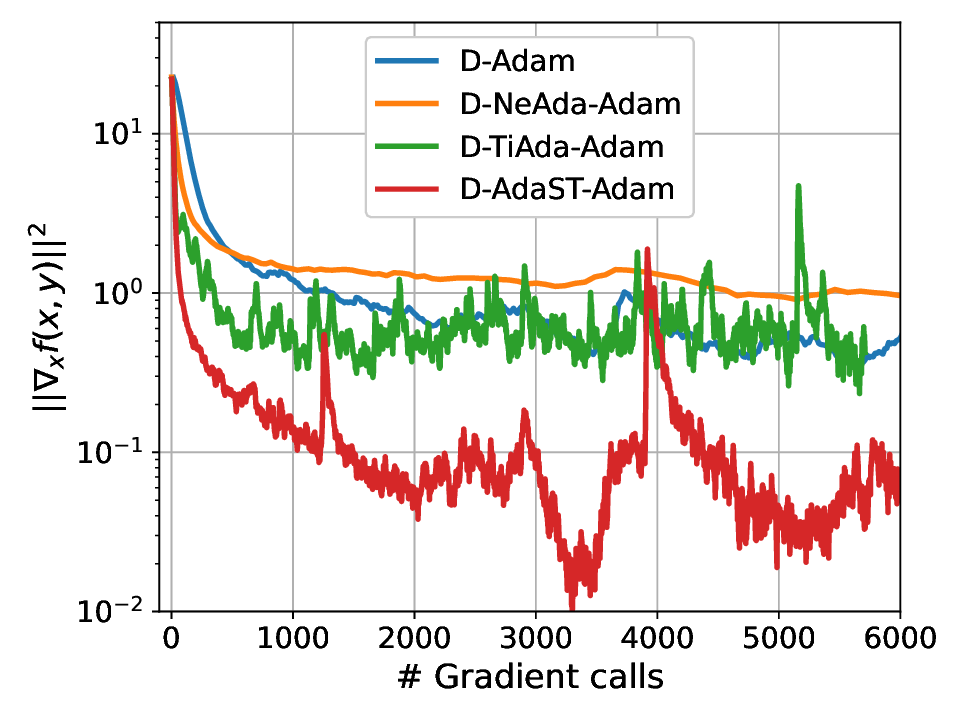} 
        \end{minipage}
    }
    \subfloat[dense, $\rho _W=0.55$]
    {
        \begin{minipage}[t]{0.24\textwidth}
            \centering 
            \includegraphics[width=\textwidth]{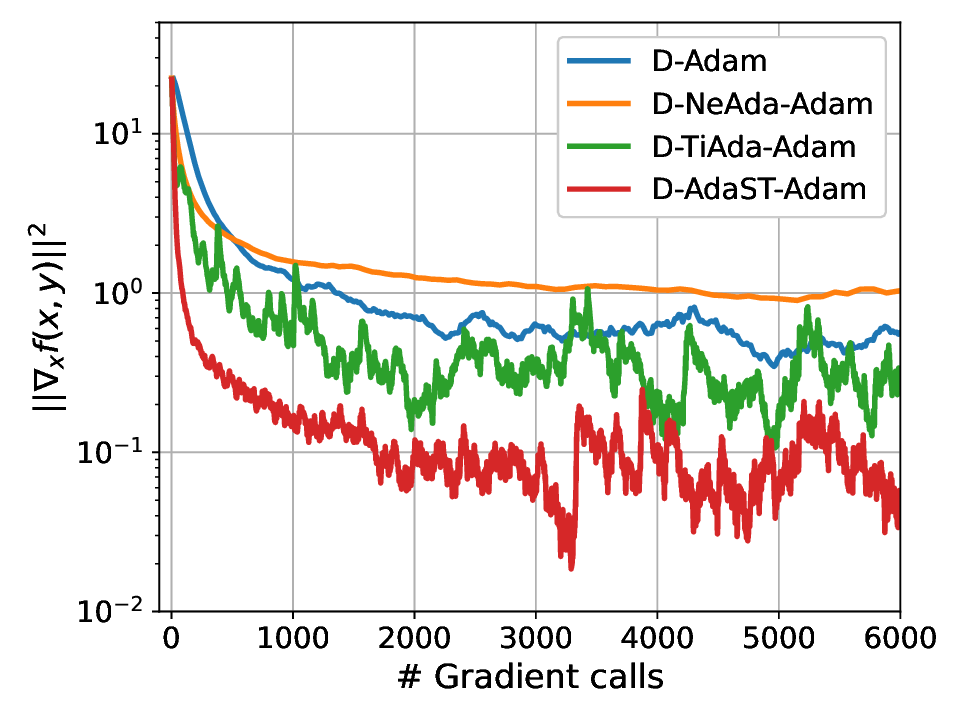}
        \end{minipage}
    }
    \subfloat[scalability]
    {
        \begin{minipage}[t]{0.24\textwidth}
            \centering 
            \includegraphics[width=\textwidth]{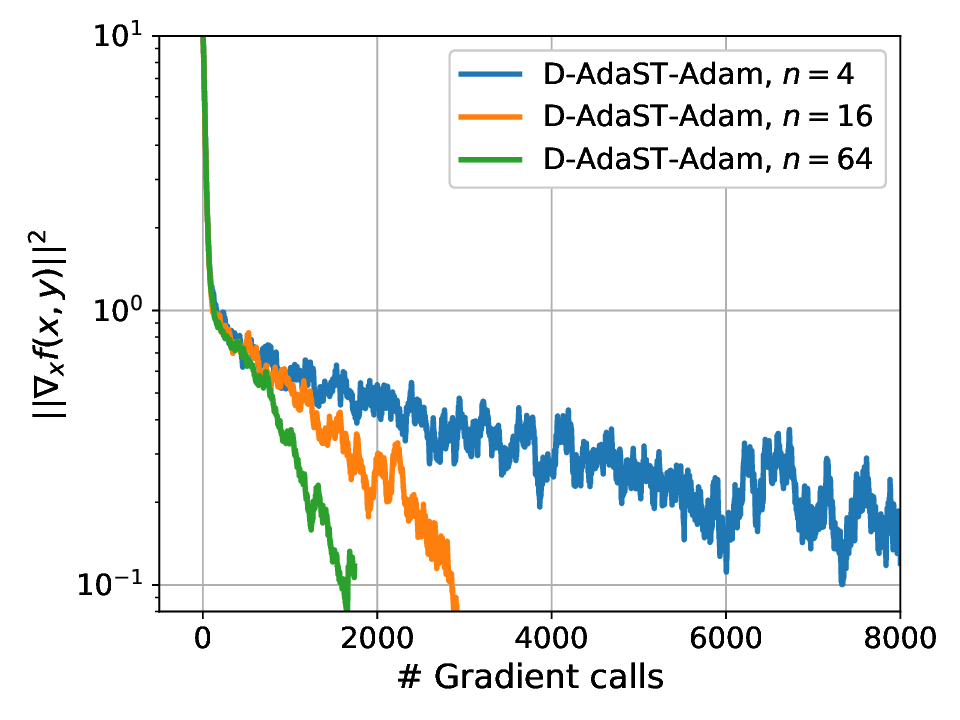}  
        \end{minipage}
    }

    \caption{Comparison of the algorithms on training robust CNN on MNIST dataset. The first row shows the results of AdaGrad-like stepsize, and the second row is for Adam-like stepsize. For the first three columns, we compare the algorithms on \textit{different graphs} with $n=20$. For the last column,
    we show the scalability of {\alg} in terms of number of nodes. Initial stepsizes are set as $\gamma _x=0.01, \gamma _y=0.1$ for AdaGrad-like stepsize, and $\gamma _x=0.1, \gamma _y=0.1$ for Adam-like stepsize.}
    \label{Fig_robust_diff_grah_2labels}
\end{figure*}

\textbf{Robust training of neural networks.}
Next, we consider the task of robust training of neural networks, in the presence of adversarial perturbations on data samples \citep{sharma2022federated, deng2021local}. The problem can be formulated as $\underset{x}{\min}\,\,\underset{y}{\max}\,1/n\sum_{i=1}^n{f_i\left( x;\xi _i+y \right) -\eta \left\| y \right\| ^2}$,
where $x$ denotes the parameters of the model, $y$ denotes the perturbation and $\xi_i$ denotes the data sample of node $i$. Note that if $\eta$ is large enough, the problem is NC-SC. We conduct experiments on MNIST dataset over different networks, e.g., ring graph, exponential (exp.) graph \citep{ying2021exponential} and dense graph with $n/2$ edges for each node. We consider a heterogeneous scenario in which each node possesses only two distinct classes of labeled samples, resulting in heterogeneity among the local datasets across nodes, while the data is i.i.d within each node.

In Figure \ref{Fig_robust_diff_grah_2labels}, we compare {\alg} with D-AdaGrad, D-TiAda and D-NeAda, using adaptive stepsizes in AdaGrad (first row) and Adam (second row, name suffixed with Adam) respectively, it can be observed from the first three columns that the proposed {\alg} outperforms the others on three different graphs and it is not very sensitive to the graph connectivity (i.e., $\rho_W$), demonstrating the quasi-independence of network as indicated in Theorem \ref{Thm_con_dassc}. It should be noted that Adam-like algorithms exhibit more fluctuations in the later stages of optimization as the gradient norm vanishes, leading to an inevitable increase in the Adam stepsize as the optimization process converges \citep{kingma2014adam}. In plots (d) and (h), we further demonstrate that  {\alg} can scale efficiently with respect to the number of nodes, while keeping a constant batch-size of 64 for each node. This showcases the algorithm's ability to handle large-scale distributed scenarios effectively.

\textbf{Generative Adversarial Networks.}
We further illustrate the effectiveness of {\alg} on another popular task of training GANs, which has a generator and a discriminator used to generate and distinguish samples respectively \citep{goodfellow2014generative}. In this experiment, we train Wasserstein GANs \citep{gulrajani2017improved} on CIFAR-10 dataset in a decentralized setting where each discriminator is 1-Lipschitz and has access to only two classes of samples. We compare the inception score of {\alg} with D-Adam and D-TiAda adopting Adam-like stepsizes in Figure \ref{Fig_train_gan}. It can be observed from the figure that {\alg} achieves higher inception scores in three cases with different initial stepsizes, and has a small score loss as the initial step size changes. We believe that this example shows the great potential of {\alg} in solving real-world problems.
 
\begin{figure*}[!tb]
    \centering
    \subfloat[$\gamma _x=\gamma _y=0.001$]
    {
        \begin{minipage}[t]{0.3\textwidth}
            \centering
            \includegraphics[width=\textwidth]{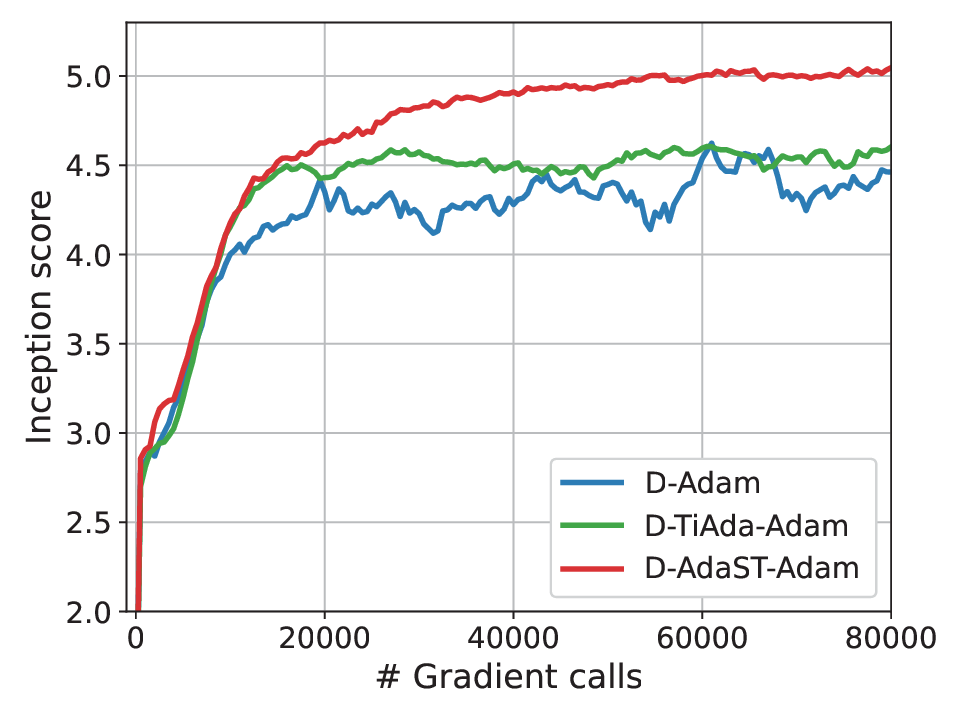}  
        \end{minipage}%
    }
    \subfloat[$\gamma _x=\gamma _y=0.01$]
    {
        \begin{minipage}[t]{0.3\textwidth}
            \centering
            \includegraphics[width=\textwidth]{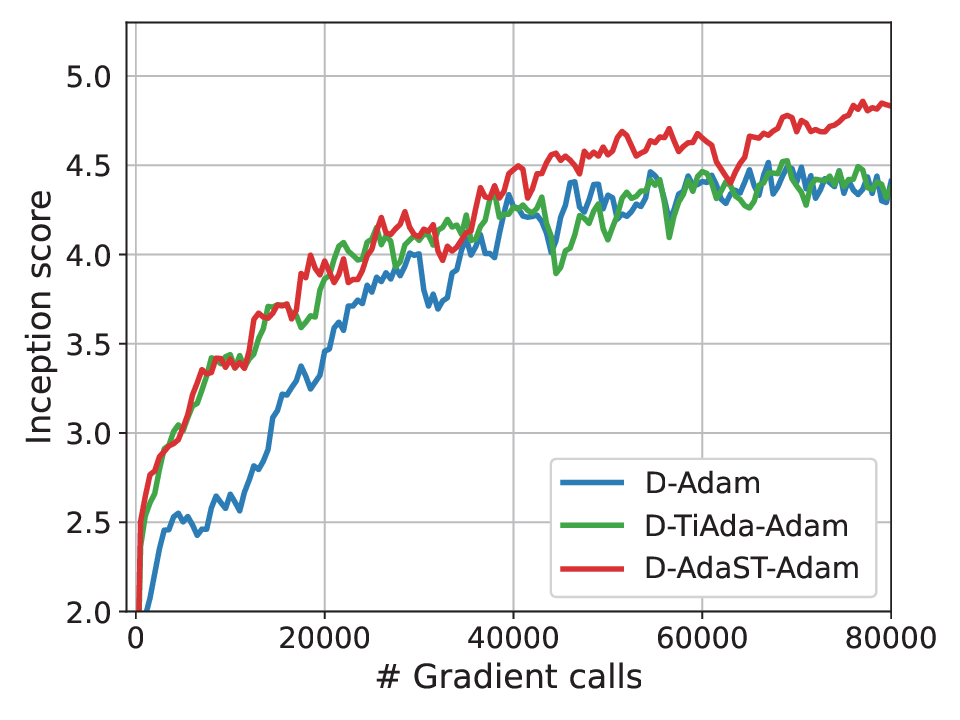}  
        \end{minipage}
    }
    \subfloat[$\gamma _x=\gamma _y=0.05$]
    {
        \begin{minipage}[t]{0.3\textwidth}
            \centering
            \includegraphics[width=\textwidth]{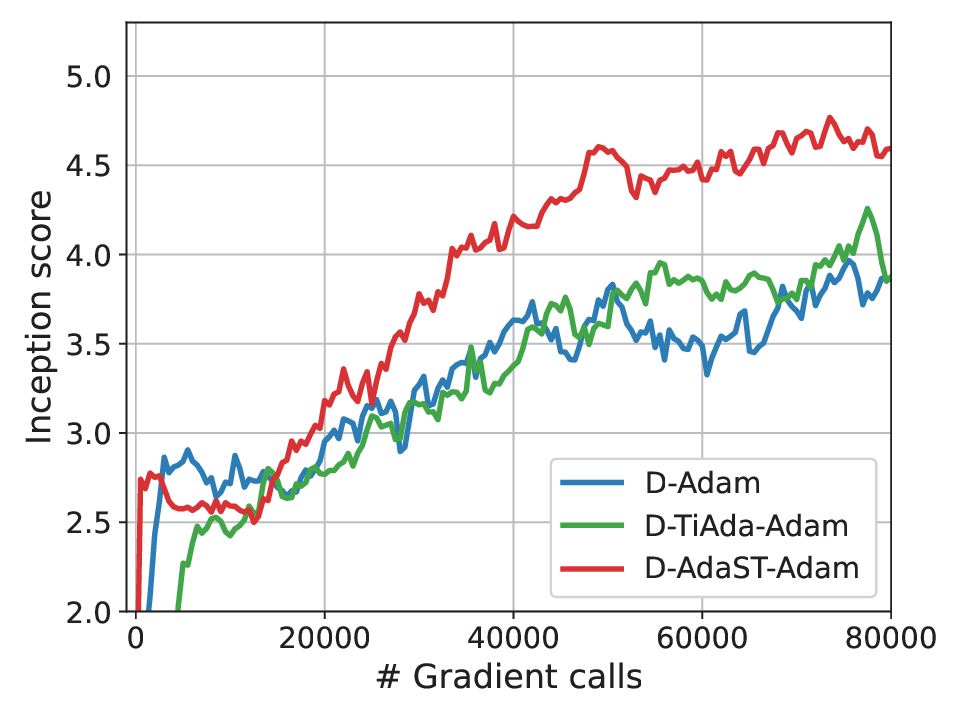}  
        \end{minipage}
    }
    \caption{Training GANs on CIFAR-10 dataset over exponential graphs with $n=10$ nodes.}
    \label{Fig_train_gan}
\end{figure*}

\section{Conclusion}
We introduced a new distributed adaptive minimax method, {\alg}, designed to tackle the issue of non-convergence in nonconvex-strongly-concave minimax problems caused by the inconsistencies among locally computed adaptive stepsizes. 
Vanilla distributed adaptive methods could suffer from such inconsistencies, 
as highlighted by the carefully designed counterexamples for demonstrating their potential non-convergence.
In contrast, our proposed method employs an efficient adaptive stepsize tracking protocol that not only ensures the time-scale separation, but also guarantees stepsize consistency among nodes and thus effectively eliminates steady-state errors. Theoretically, we showed that {\alg} can achieve a near-optimal convergence rate of $\tilde{\mathcal{O}} \left( \epsilon ^{-\left( 4+\delta \right)} \right) $ with any arbitrarily small $\delta>0$.
Extensive experiments on both real-world and synthetic datasets have been conducted to validate our theoretical findings across various scenarios.

\bibliographystyle{apalike}
\bibliography{reference}

\begin{thebibliography}{}

\bibitem[Antonakopoulos et~al., 2021]{antonakopoulos2021adaptive}
Antonakopoulos, K., Belmega, V.~E., and Mertikopoulos, P. (2021).
\newblock Adaptive extra-gradient methods for min-max optimization and games.
\newblock In {\em ICLR 2021-9th International Conference on Learning Representations}, pages 1--28.

\bibitem[Borodich et~al., 2021]{borodich2021decentralized}
Borodich, E., Beznosikov, A., Sadiev, A., Sushko, V., Savelyev, N., Tak{\'a}{\v{c}}, M., and Gasnikov, A. (2021).
\newblock Decentralized personalized federated min-max problems.
\newblock {\em arXiv preprint arXiv:2106.07289}.

\bibitem[Bo{\c{t}} and B{\"o}hm, 2023]{boct2023alternating}
Bo{\c{t}}, R.~I. and B{\"o}hm, A. (2023).
\newblock Alternating proximal-gradient steps for (stochastic) nonconvex-concave minimax problems.
\newblock {\em SIAM Journal on Optimization}, 33(3):1884--1913.

\bibitem[Chen et~al., 2023a]{chen2023efficient}
Chen, C., Shen, L., Liu, W., and Luo, Z.-Q. (2023a).
\newblock Efficient-adam: Communication-efficient distributed adam.
\newblock {\em IEEE Transactions on Signal Processing}.

\bibitem[Chen et~al., 2022]{chen2022simple}
Chen, L., Ye, H., and Luo, L. (2022).
\newblock A simple and efficient stochastic algorithm for decentralized nonconvex-strongly-concave minimax optimization.
\newblock {\em arXiv preprint arXiv:2212.02387}.

\bibitem[Chen et~al., 2024]{chen2024efficient}
Chen, L., Ye, H., and Luo, L. (2024).
\newblock An efficient stochastic algorithm for decentralized nonconvex-strongly-concave minimax optimization.
\newblock In {\em International Conference on Artificial Intelligence and Statistics}, pages 1990--1998. PMLR.

\bibitem[Chen et~al., 2021]{chen2021closing}
Chen, T., Sun, Y., and Yin, W. (2021).
\newblock Closing the gap: Tighter analysis of alternating stochastic gradient methods for bilevel problems.
\newblock {\em Advances in Neural Information Processing Systems}, 34:25294--25307.

\bibitem[Chen et~al., 2023b]{chen2023convergence}
Chen, X., Karimi, B., Zhao, W., and Li, P. (2023b).
\newblock On the convergence of decentralized adaptive gradient methods.
\newblock In {\em Asian Conference on Machine Learning}, pages 217--232. PMLR.

\bibitem[Daskalakis et~al., 2021]{daskalakis2021complexity}
Daskalakis, C., Skoulakis, S., and Zampetakis, M. (2021).
\newblock The complexity of constrained min-max optimization.
\newblock In {\em Proceedings of the 53rd Annual ACM SIGACT Symposium on Theory of Computing}, pages 1466--1478.

\bibitem[Dem'yanov and Pevnyi, 1972]{dem1972numerical}
Dem'yanov, V.~F. and Pevnyi, A.~B. (1972).
\newblock Numerical methods for finding saddle points.
\newblock {\em USSR Computational Mathematics and Mathematical Physics}, 12(5):11--52.

\bibitem[Deng and Mahdavi, 2021]{deng2021local}
Deng, Y. and Mahdavi, M. (2021).
\newblock Local stochastic gradient descent ascent: Convergence analysis and communication efficiency.
\newblock In {\em International Conference on Artificial Intelligence and Statistics}, pages 1387--1395. PMLR.

\bibitem[Diakonikolas, 2020]{diakonikolas2020halpern}
Diakonikolas, J. (2020).
\newblock Halpern iteration for near-optimal and parameter-free monotone inclusion and strong solutions to variational inequalities.
\newblock In {\em Conference on Learning Theory}, pages 1428--1451. PMLR.

\bibitem[Dinh et~al., 2017]{dinh2017sharp}
Dinh, L., Pascanu, R., Bengio, S., and Bengio, Y. (2017).
\newblock Sharp minima can generalize for deep nets.
\newblock In {\em International Conference on Machine Learning}, pages 1019--1028. PMLR.

\bibitem[Duchi et~al., 2011]{duchi2011adaptive}
Duchi, J., Hazan, E., and Singer, Y. (2011).
\newblock Adaptive subgradient methods for online learning and stochastic optimization.
\newblock {\em Journal of machine learning research}, 12(7).

\bibitem[Ene and L{\^e}~Nguyen, 2022]{ene2022adaptive}
Ene, A. and L{\^e}~Nguyen, H. (2022).
\newblock Adaptive and universal algorithms for variational inequalities with optimal convergence.
\newblock In {\em Proceedings of the AAAI Conference on Artificial Intelligence}, volume~36, pages 6559--6567.

\bibitem[Goodfellow et~al., 2014]{goodfellow2014generative}
Goodfellow, I., Pouget-Abadie, J., Mirza, M., Xu, B., Warde-Farley, D., Ozair, S., Courville, A., and Bengio, Y. (2014).
\newblock Generative adversarial nets.
\newblock {\em Advances in neural information processing systems}, 27.

\bibitem[Gulrajani et~al., 2017]{gulrajani2017improved}
Gulrajani, I., Ahmed, F., Arjovsky, M., Dumoulin, V., and Courville, A.~C. (2017).
\newblock Improved training of wasserstein gans.
\newblock {\em Advances in neural information processing systems}, 30.

\bibitem[Guo et~al., 2021]{guo2021novel}
Guo, Z., Xu, Y., Yin, W., Jin, R., and Yang, T. (2021).
\newblock A novel convergence analysis for algorithms of the adam family.
\newblock {\em arXiv preprint arXiv:2112.03459}.

\bibitem[Heusel et~al., 2017]{heusel2017gans}
Heusel, M., Ramsauer, H., Unterthiner, T., Nessler, B., and Hochreiter, S. (2017).
\newblock Gans trained by a two time-scale update rule converge to a local nash equilibrium.
\newblock {\em Advances in neural information processing systems}, 30.

\bibitem[Hsieh et~al., 2021]{hsieh2021limits}
Hsieh, Y.-P., Mertikopoulos, P., and Cevher, V. (2021).
\newblock The limits of min-max optimization algorithms: Convergence to spurious non-critical sets.
\newblock In {\em International Conference on Machine Learning}, pages 4337--4348. PMLR.

\bibitem[Huang et~al., 2024]{huang2024adaptive}
Huang, F., Wang, X., Li, J., and Chen, S. (2024).
\newblock Adaptive federated minimax optimization with lower complexities.
\newblock In {\em International Conference on Artificial Intelligence and Statistics}, pages 4663--4671. PMLR.

\bibitem[Huang et~al., 2023]{huang2023adagda}
Huang, F., Wu, X., and Hu, Z. (2023).
\newblock Adagda: Faster adaptive gradient descent ascent methods for minimax optimization.
\newblock In {\em International Conference on Artificial Intelligence and Statistics}, pages 2365--2389. PMLR.

\bibitem[Huang et~al., 2021]{huang2021efficient}
Huang, F., Wu, X., and Huang, H. (2021).
\newblock Efficient mirror descent ascent methods for nonsmooth minimax problems.
\newblock {\em Advances in Neural Information Processing Systems}, 34:10431--10443.

\bibitem[Huang et~al., 2022]{pmlr-v162-huang22i}
Huang, Y., Sun, Y., Zhu, Z., Yan, C., and Xu, J. (2022).
\newblock Tackling data heterogeneity: A new unified framework for decentralized {SGD} with sample-induced topology.
\newblock In {\em Proceedings of the 39th International Conference on Machine Learning}, volume 162 of {\em Proceedings of Machine Learning Research}, pages 9310--9345. PMLR.

\bibitem[Ju et~al., 2023]{ju2023accelerating}
Ju, L., Zhang, T., Toor, S., and Hellander, A. (2023).
\newblock Accelerating fair federated learning: Adaptive federated adam.
\newblock {\em arXiv preprint arXiv:2301.09357}.

\bibitem[Kavis et~al., 2022]{kavis2022high}
Kavis, A., Levy, K.~Y., and Cevher, V. (2022).
\newblock High probability bounds for a class of nonconvex algorithms with adagrad stepsize.
\newblock In {\em International Conference on Learning Representations}.

\bibitem[Kingma and Ba, 2014]{kingma2014adam}
Kingma, D.~P. and Ba, J. (2014).
\newblock Adam: A method for stochastic optimization.
\newblock {\em arXiv preprint arXiv:1412.6980}.

\bibitem[Li et~al., 2022]{li2022convergence}
Li, H., Farnia, F., Das, S., and Jadbabaie, A. (2022).
\newblock On convergence of gradient descent ascent: A tight local analysis.
\newblock In {\em International Conference on Machine Learning}, pages 12717--12740. PMLR.

\bibitem[Li et~al., 2021]{li2021complexity}
Li, H., Tian, Y., Zhang, J., and Jadbabaie, A. (2021).
\newblock Complexity lower bounds for nonconvex-strongly-concave min-max optimization.
\newblock {\em Advances in Neural Information Processing Systems}, 34:1792--1804.

\bibitem[Li et~al., 2023]{li2023tiada}
Li, X., YANG, J., and He, N. (2023).
\newblock Tiada: A time-scale adaptive algorithm for nonconvex minimax optimization.
\newblock In {\em The Eleventh International Conference on Learning Representations}.

\bibitem[Lian et~al., 2017]{lian2017can}
Lian, X., Zhang, C., Zhang, H., Hsieh, C.-J., Zhang, W., and Liu, J. (2017).
\newblock Can decentralized algorithms outperform centralized algorithms? a case study for decentralized parallel stochastic gradient descent.
\newblock {\em Advances in Neural Information Processing Systems}, 30.

\bibitem[Liggett, 2022]{liggett2022distributed}
Liggett, B. (2022).
\newblock Distributed learning with automated stepsizes.

\bibitem[Lin et~al., 2020]{lin2020gradient}
Lin, T., Jin, C., and Jordan, M. (2020).
\newblock On gradient descent ascent for nonconvex-concave minimax problems.
\newblock In {\em International Conference on Machine Learning}, pages 6083--6093. PMLR.

\bibitem[Liu et~al., 2020]{liu2020decentralized}
Liu, M., Zhang, W., Mroueh, Y., Cui, X., Ross, J., Yang, T., and Das, P. (2020).
\newblock A decentralized parallel algorithm for training generative adversarial nets.
\newblock {\em Advances in Neural Information Processing Systems}, 33:11056--11070.

\bibitem[Madras et~al., 2018]{madras2018learning}
Madras, D., Creager, E., Pitassi, T., and Zemel, R. (2018).
\newblock Learning adversarially fair and transferable representations.
\newblock In {\em International Conference on Machine Learning}, pages 3384--3393. PMLR.

\bibitem[Mohri et~al., 2019]{mohri2019agnostic}
Mohri, M., Sivek, G., and Suresh, A.~T. (2019).
\newblock Agnostic federated learning.
\newblock In {\em International Conference on Machine Learning}, pages 4615--4625. PMLR.

\bibitem[Nedic and Ozdaglar, 2009]{nedic2009distributed}
Nedic, A. and Ozdaglar, A. (2009).
\newblock Distributed subgradient methods for multi-agent optimization.
\newblock {\em IEEE Transactions on Automatic Control}, 54(1):48--61.

\bibitem[Nedic et~al., 2010]{nedic2010constrained}
Nedic, A., Ozdaglar, A., and Parrilo, P.~A. (2010).
\newblock Constrained consensus and optimization in multi-agent networks.
\newblock {\em IEEE Transactions on Automatic Control}, 55(4):922--938.

\bibitem[Nemirovski et~al., 2009]{nemirovski2009robust}
Nemirovski, A., Juditsky, A., Lan, G., and Shapiro, A. (2009).
\newblock Robust stochastic approximation approach to stochastic programming.
\newblock {\em SIAM Journal on optimization}, 19(4):1574--1609.

\bibitem[Pu and Nedi{\'c}, 2021]{pu2021distributed}
Pu, S. and Nedi{\'c}, A. (2021).
\newblock Distributed stochastic gradient tracking methods.
\newblock {\em Mathematical Programming}, 187(1):409--457.

\bibitem[Reddi et~al., 2018]{reddi2018convergence}
Reddi, S.~J., Kale, S., and Kumar, S. (2018).
\newblock On the convergence of adam and beyond.
\newblock In {\em International Conference on Learning Representations}.

\bibitem[Sharma et~al., 2023]{sharma2023federated}
Sharma, P., Panda, R., and Joshi, G. (2023).
\newblock Federated minimax optimization with client heterogeneity.
\newblock {\em arXiv preprint arXiv:2302.04249}.

\bibitem[Sharma et~al., 2022]{sharma2022federated}
Sharma, P., Panda, R., Joshi, G., and Varshney, P. (2022).
\newblock Federated minimax optimization: Improved convergence analyses and algorithms.
\newblock In {\em International Conference on Machine Learning}, pages 19683--19730. PMLR.

\bibitem[Sinha et~al., 2017]{sinha2017certifying}
Sinha, A., Namkoong, H., Volpi, R., and Duchi, J. (2017).
\newblock Certifying some distributional robustness with principled adversarial training.
\newblock {\em arXiv preprint arXiv:1710.10571}.

\bibitem[Tarzanagh et~al., 2022]{tarzanagh2022fednest}
Tarzanagh, D.~A., Li, M., Thrampoulidis, C., and Oymak, S. (2022).
\newblock Fednest: Federated bilevel, minimax, and compositional optimization.
\newblock In {\em International Conference on Machine Learning}, pages 21146--21179. PMLR.

\bibitem[Tsaknakis et~al., 2020]{tsaknakis2020decentralized}
Tsaknakis, I., Hong, M., and Liu, S. (2020).
\newblock Decentralized min-max optimization: Formulations, algorithms and applications in network poisoning attack.
\newblock In {\em ICASSP 2020-2020 IEEE International Conference on Acoustics, Speech and Signal Processing (ICASSP)}, pages 5755--5759. IEEE.

\bibitem[Wang et~al., 2020]{wang2020tackling}
Wang, J., Liu, Q., Liang, H., Joshi, G., and Poor, H.~V. (2020).
\newblock Tackling the objective inconsistency problem in heterogeneous federated optimization.
\newblock {\em Advances in neural information processing systems}, 33:7611--7623.

\bibitem[Wang et~al., 2021]{wang2021adversarial}
Wang, J., Zhang, T., Liu, S., Chen, P.-Y., Xu, J., Fardad, M., and Li, B. (2021).
\newblock Adversarial attack generation empowered by min-max optimization.
\newblock {\em Advances in Neural Information Processing Systems}, 34:16020--16033.

\bibitem[Wu et~al., 2023]{wu2023solving}
Wu, X., Sun, J., Hu, Z., Zhang, A., and Huang, H. (2023).
\newblock Solving a class of non-convex minimax optimization in federated learning.
\newblock In {\em Thirty-seventh Conference on Neural Information Processing Systems}.

\bibitem[Xian et~al., 2021]{xian2021faster}
Xian, W., Huang, F., Zhang, Y., and Huang, H. (2021).
\newblock A faster decentralized algorithm for nonconvex minimax problems.
\newblock {\em Advances in Neural Information Processing Systems}, 34:25865--25877.

\bibitem[Xiao et~al., 2006]{xiao2006distributed}
Xiao, L., Boyd, S., and Lall, S. (2006).
\newblock Distributed average consensus with time-varying metropolis weights.
\newblock {\em Automatica}, 1:1--4.

\bibitem[Yang et~al., 2022a]{NEURIPS2022_2f13806d}
Yang, H., Liu, Z., Zhang, X., and Liu, J. (2022a).
\newblock Sagda: Achieving $\mathcal{O} \left( \varepsilon ^{-2} \right)$ communication complexity in federated min-max learning.
\newblock In Koyejo, S., Mohamed, S., Agarwal, A., Belgrave, D., Cho, K., and Oh, A., editors, {\em Advances in Neural Information Processing Systems}, volume~35, pages 7142--7154. Curran Associates, Inc.

\bibitem[Yang et~al., 2022b]{yang2022nest}
Yang, J., Li, X., and He, N. (2022b).
\newblock Nest your adaptive algorithm for parameter-agnostic nonconvex minimax optimization.
\newblock In Oh, A.~H., Agarwal, A., Belgrave, D., and Cho, K., editors, {\em Advances in Neural Information Processing Systems}.

\bibitem[Yang et~al., 2022c]{yang2022faster}
Yang, J., Orvieto, A., Lucchi, A., and He, N. (2022c).
\newblock Faster single-loop algorithms for minimax optimization without strong concavity.
\newblock In {\em International Conference on Artificial Intelligence and Statistics}, pages 5485--5517. PMLR.

\bibitem[Ying et~al., 2021]{ying2021exponential}
Ying, B., Yuan, K., Chen, Y., Hu, H., Pan, P., and Yin, W. (2021).
\newblock Exponential graph is provably efficient for decentralized deep training.
\newblock {\em Advances in Neural Information Processing Systems}, 34:13975--13987.

\bibitem[Yuan et~al., 2016]{yuan2016convergence}
Yuan, K., Ling, Q., and Yin, W. (2016).
\newblock On the convergence of decentralized gradient descent.
\newblock {\em SIAM Journal on Optimization}, 26(3):1835--1854.

\bibitem[Zhang et~al., 2023]{zhang2023communication}
Zhang, S., Choudhury, S., Stich, S.~U., and Loizou, N. (2023).
\newblock Communication-efficient gradient descent-accent methods for distributed variational inequalities: Unified analysis and local updates.
\newblock {\em arXiv preprint arXiv:2306.05100}.

\bibitem[Zhang et~al., 2021a]{zhang2021complexity}
Zhang, S., Yang, J., Guzm{\'a}n, C., Kiyavash, N., and He, N. (2021a).
\newblock The complexity of nonconvex-strongly-concave minimax optimization.
\newblock In {\em Uncertainty in Artificial Intelligence}, pages 482--492. PMLR.

\bibitem[Zhang et~al., 2021b]{zhang2021taming}
Zhang, X., Liu, Z., Liu, J., Zhu, Z., and Lu, S. (2021b).
\newblock Taming communication and sample complexities in decentralized policy evaluation for cooperative multi-agent reinforcement learning.
\newblock {\em Advances in Neural Information Processing Systems}, 34:18825--18838.

\bibitem[Zhang et~al., 2024]{zhang2024jointly}
Zhang, X., Mancino-Ball, G., Aybat, N.~S., and Xu, Y. (2024).
\newblock Jointly improving the sample and communication complexities in decentralized stochastic minimax optimization.
\newblock In {\em Proceedings of the AAAI Conference on Artificial Intelligence}, volume~38, pages 20865--20873.

\bibitem[Zhou et~al., 2018]{zhou2018convergence}
Zhou, D., Chen, J., Cao, Y., Tang, Y., Yang, Z., and Gu, Q. (2018).
\newblock On the convergence of adaptive gradient methods for nonconvex optimization.
\newblock {\em arXiv preprint arXiv:1808.05671}.

\bibitem[Zou et~al., 2019]{zou2019sufficient}
Zou, F., Shen, L., Jie, Z., Zhang, W., and Liu, W. (2019).
\newblock A sufficient condition for convergences of adam and rmsprop.
\newblock In {\em Proceedings of the IEEE/CVF Conference on computer vision and pattern recognition}, pages 11127--11135.

\end{thebibliography}

\appendix

\section{Additional Experiments} \label{App_Additional Experiments}
In this section, we provide detailed experimental settings and perform additional experiments on the task of training robust neural networks with different choices of hyper-parameters.
All experiments are deployed in a server with Intel Xeon E5-2680 v4 CPU @ 2.40GHz and 8 Nvidia RTX 3090 GPUs, and implemented using distributed communication package \textit{torch.distributed} in PyTorch 2.0, where each process serves as a node, and we use inter-process communication to mimic communication between nodes. For the AdaGrad-like algorithms considered in the experiments of training neural networks, similar to the Adam-like stepsize, we adopt a coordinate-wise adaptive stepsize rule as commonly used in existing centralized adaptive methods \citep{yang2022nest,li2023tiada}. Moreover, since we attempt to develop a parameter-agnostic algorithm that does not need much effort in tuning hyper-parameters, we set $\alpha=0.6$ and $\beta=0.4$ for all tasks in the main text, and evaluate the effect of the choices of $\alpha$ and $\beta$ on the performance of {\alg} individually in an additional experiment on the synthetic objective function as shown in Appendix \ref{App_add_experiments_a_b}.

\subsection{Experimental details} \label{App_exp_details}

\textbf{Communication topology.} For the experiments in the main text, we utilize three commonly used communication topologies: indirect ring, exponential graph and dense graph. An indirect ring is a sparse graph in which each node is sequentially connected to form a ring, with only two neighbors per node. Exponential graph \citep{ying2021exponential} is a directed graph where each node is connected to nodes at distances of $2^0,2^1...,2^{\log n}$. Exponential graphs achieve a good balance between the degree and connectivity of the graph. A dense graph is an indirect graph where each node is connected to nodes at distances of $1, 2, 4,..., n$. We also consider directed ring and fully connected graphs, which are more sparsely and densely connected, respectively, in the additional experiments.

\textbf{Robust training of neural network.} In this task, we train CNNs with three convolutional layers and one fully connected layer on MNIST dataset containing images of 10 classes. Each layer adopts batch normalization and ELU activation. The total batch-size is 1280, and the batch-size of each node during training is $1280/n$. For Adam-like algorithms, we set the first and second moment parameters as ${\beta}_{1}=0.9, {\beta}_{2}=0.999$ respectively. Since NeAda is a double-loop algorithm, for fair comparison, we imply D-AdaGrad and D-Adam using 15 iterations of inner loop in this task.

\textbf{Generative Adversarial Networks.} In this task, we train Wasserstein GANs on CIFAR-10 dataset, where the model used for discriminator is a four layer CNNs, and for generator is a four layer CNNs with transpose convolution layers. The total batch-size is 1280, and the batch-size of each node during training is 128 with 10 nodes. For Adam-like algorithms, we use ${\beta}_{1}=0.5, {\beta}_{2}=0.9$. To obtain the inception score, we use 8000 artificially generated samples to feed the previously trained inception network.

\subsection{Additional experiments on robust training of neural network.}\label{App_add_experiments}

In this part, we conduct additional experiments on robust training of CNNs on MNIST dataset considering a variety of settings. We compare the convergence performance of {\alg} with D-AdaGrad, D-TiAda and D-NeAda using adaptive stepsizes of AdaGrad and Adam. Unless otherwise specified, the total batch-size is set to 1280; the initial stepsizes for $x$ and $y$ are assigned as $\gamma _x=0.01, \gamma _y=0.1$ for AdaGrad-like algorithms, and $\gamma _x=\gamma _y=0.1$ for Adam-like algorithms. Specifically, we consider two extra graphs that are more sparse and more dense, respectively in Figure \ref{Fig_robust_ring_fc}, e.g., directed ring and fully-connected (fc) graphs. We consider more initial stepsizes settings for $x$ and $y$ respectively in Figure \ref{Fig_robust_diff_init_stpesize}. Further, we also consider different data distributions where each node has samples from 4 of the 10 classes in Figure \ref{Fig_robust_4labels}. Finally, we perform a comparison experiment with 40 nodes in Figure \ref{Fig_robust_n40}. Under all settings, the proposed {\alg} outperforms the others, demonstrating the superiority of {\alg}.

\begin{figure}[t]
    \centering
    \subfloat[directed-ring]
    {
        \begin{minipage}[t]{0.25\textwidth}
            \centering 
            \includegraphics[width=\textwidth]{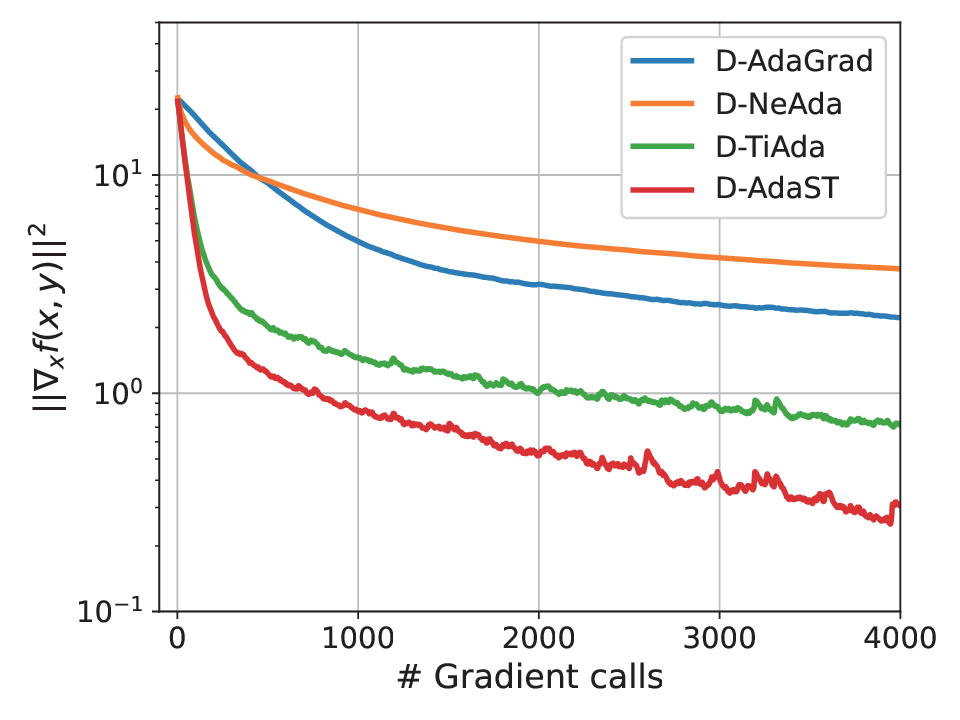}
        \end{minipage}
    }
    \subfloat[fc]
    {
        \begin{minipage}[t]{0.25\textwidth}
            \centering 
            \includegraphics[width=\textwidth]{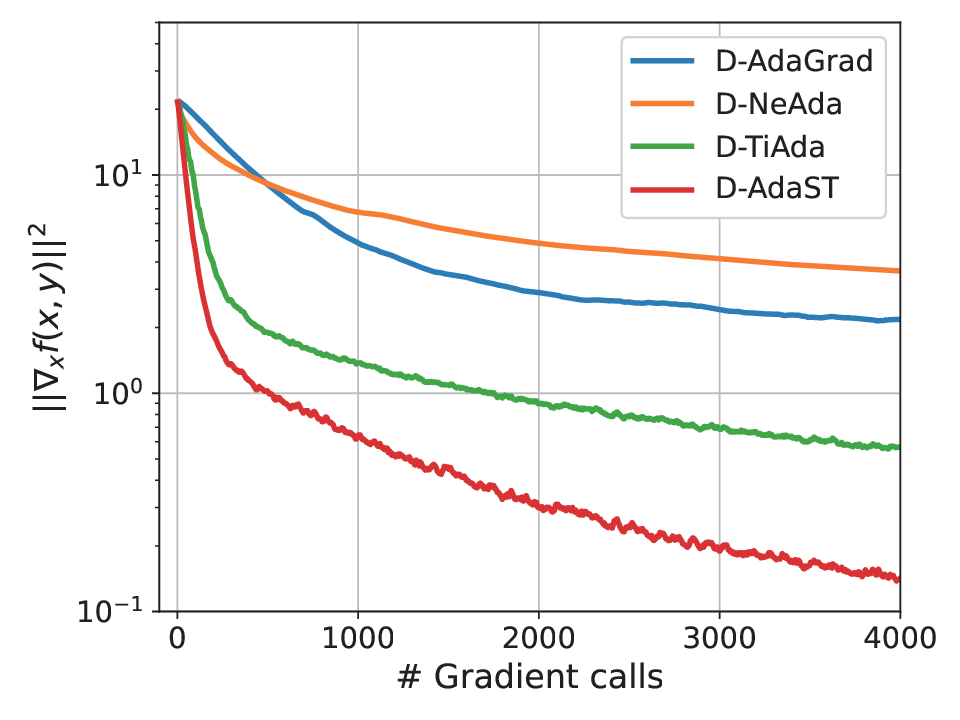}
        \end{minipage}
    }
    \subfloat[directed-ring]
    {
        \begin{minipage}[t]{0.25\textwidth}
            \centering  
            \includegraphics[width=\textwidth]{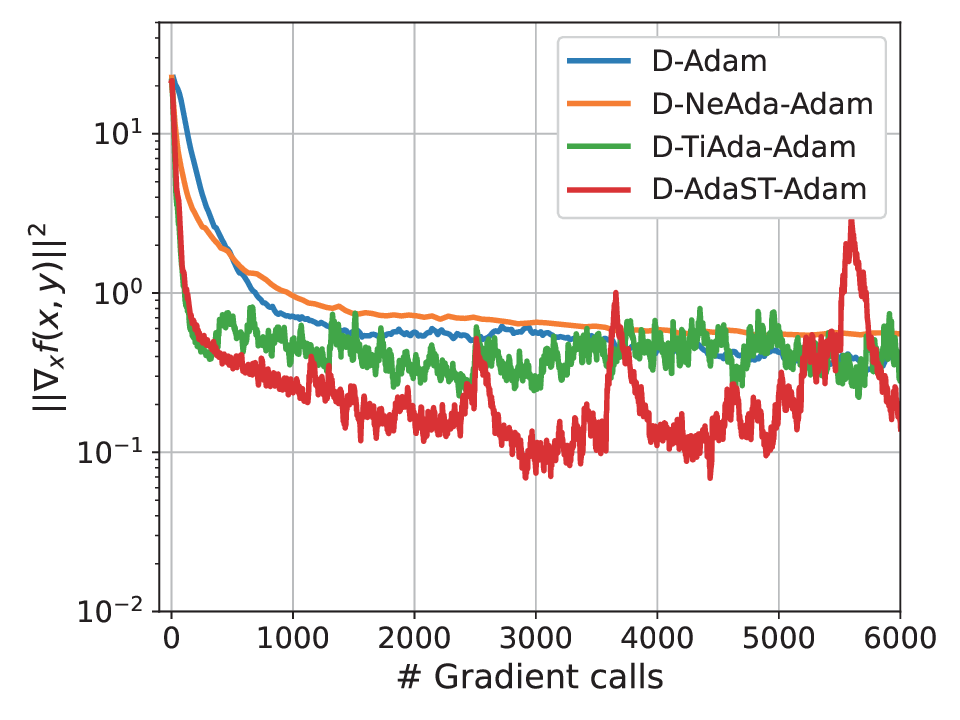}
        \end{minipage}
    }
    \subfloat[fc] 
    {
        \begin{minipage}[t]{0.25\textwidth}
            \centering    
            \includegraphics[width=\textwidth]{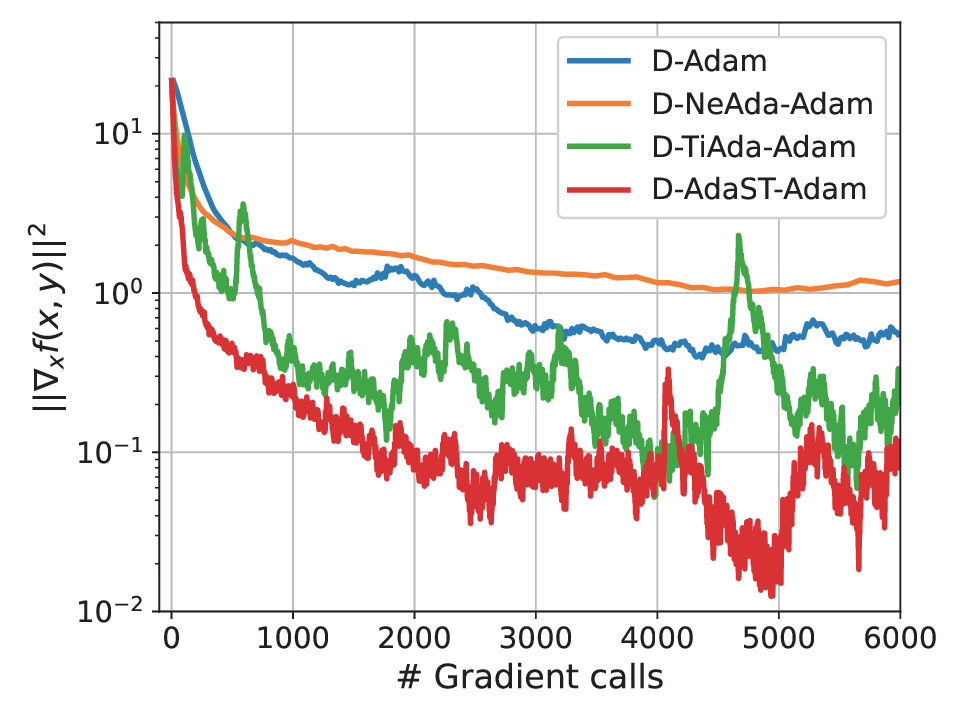}  
        \end{minipage}%
    }
    \caption{Performance comparison of training CNN on MNIST with $n=20$ nodes over \textit{directed ring and fully connected graphs}.}
    \label{Fig_robust_ring_fc}
\end{figure}
\begin{figure}[!tb]
    \centering
    \subfloat[0.01, 0.01] 
    {
        \begin{minipage}[t]{0.25\textwidth}
            \centering  
            \includegraphics[width=\textwidth]{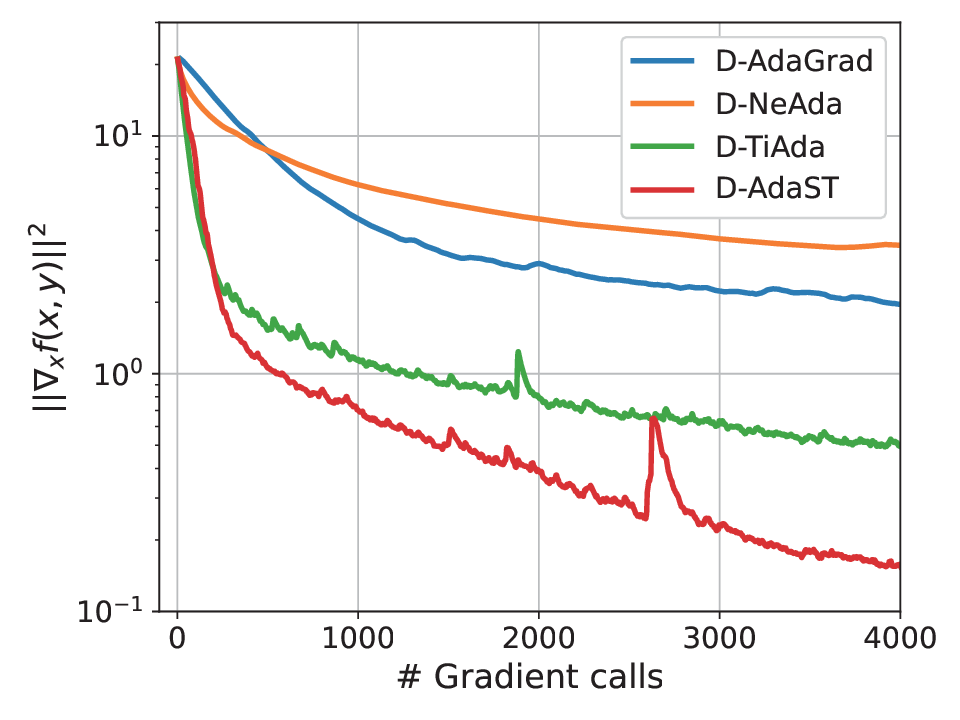}  
        \end{minipage}
    }
    \subfloat[0.001, 0.01]
    {
        \begin{minipage}[t]{0.25\textwidth}
            \centering     
            \includegraphics[width=\textwidth]{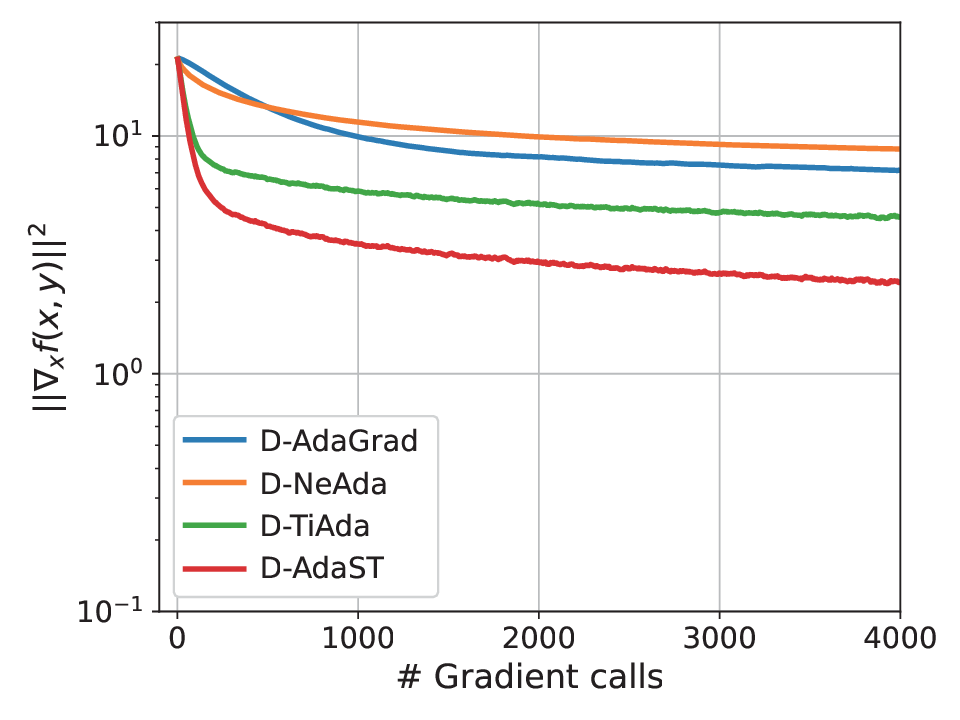}  
        \end{minipage}
    }
    \subfloat[0.01, 0.01]
    {
        \begin{minipage}[t]{0.25\textwidth}
            \centering    
            \includegraphics[width=\textwidth]{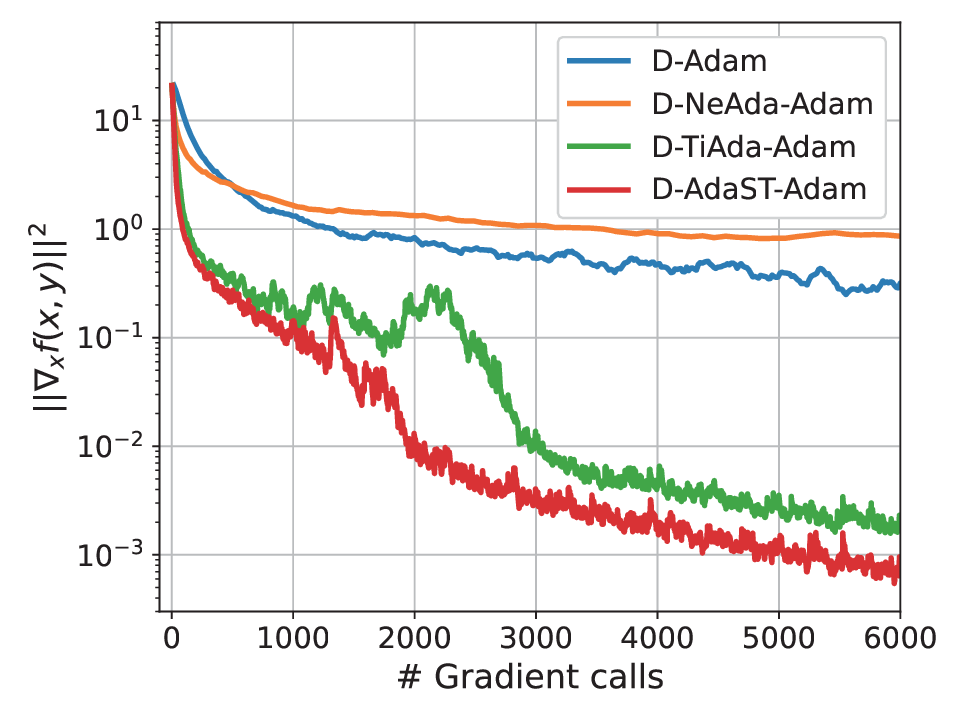} 
        \end{minipage}
    }
    \subfloat[0.01, 0.1] 
    {
        \begin{minipage}[t]{0.25\textwidth}
            \centering   
            \includegraphics[width=\textwidth]{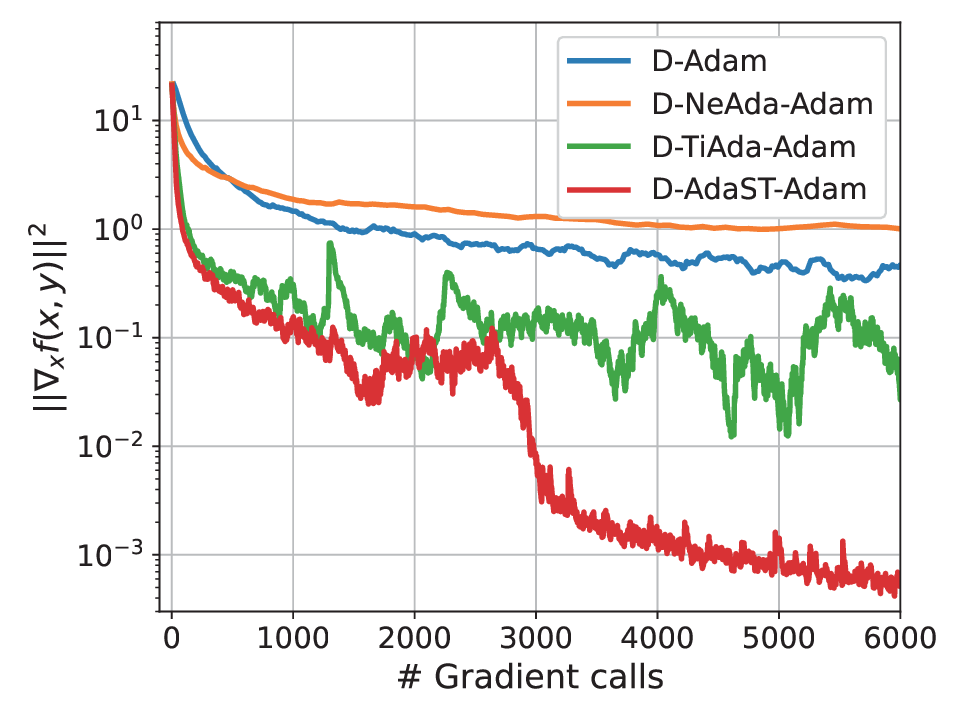}  
        \end{minipage}
    }
    \caption{Performance comparison of training CNN on MNIST with $n=20$ nodes with \textit{different initial stepsizes} $\gamma_x $ and $ \gamma_y$.}
    \label{Fig_robust_diff_init_stpesize}
\end{figure}
\begin{figure}[!tb]
    \centering
    \subfloat[exp.]
    {
        \begin{minipage}[t]{0.25\textwidth}
            \centering
            \includegraphics[width=\textwidth]{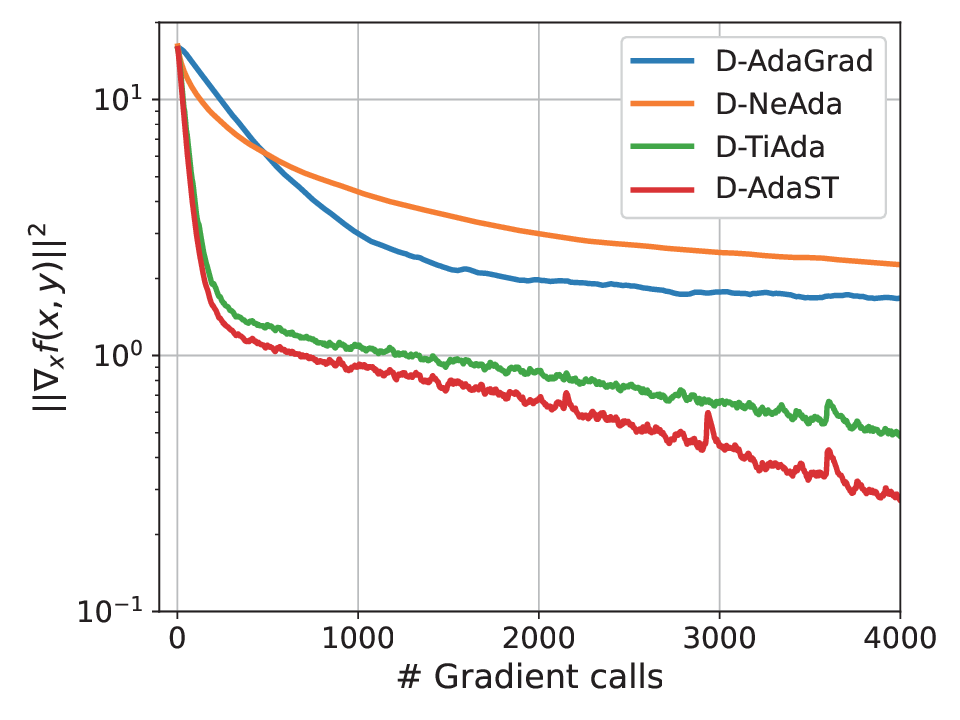} 
        \end{minipage}%
    }
    \subfloat[dense]
    {
        \begin{minipage}[t]{0.25\textwidth}
            \centering 
            \includegraphics[width=\textwidth]{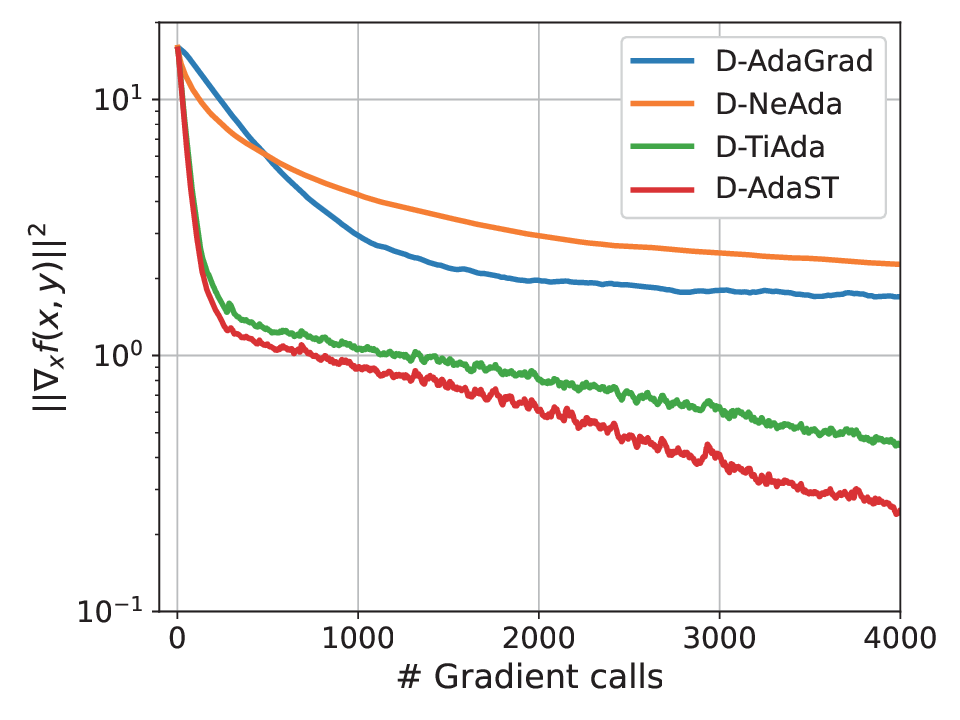}   
        \end{minipage}
    }
    \subfloat[exp.] 
    {
        \begin{minipage}[t]{0.25\textwidth}
            \centering     
            \includegraphics[width=\textwidth]{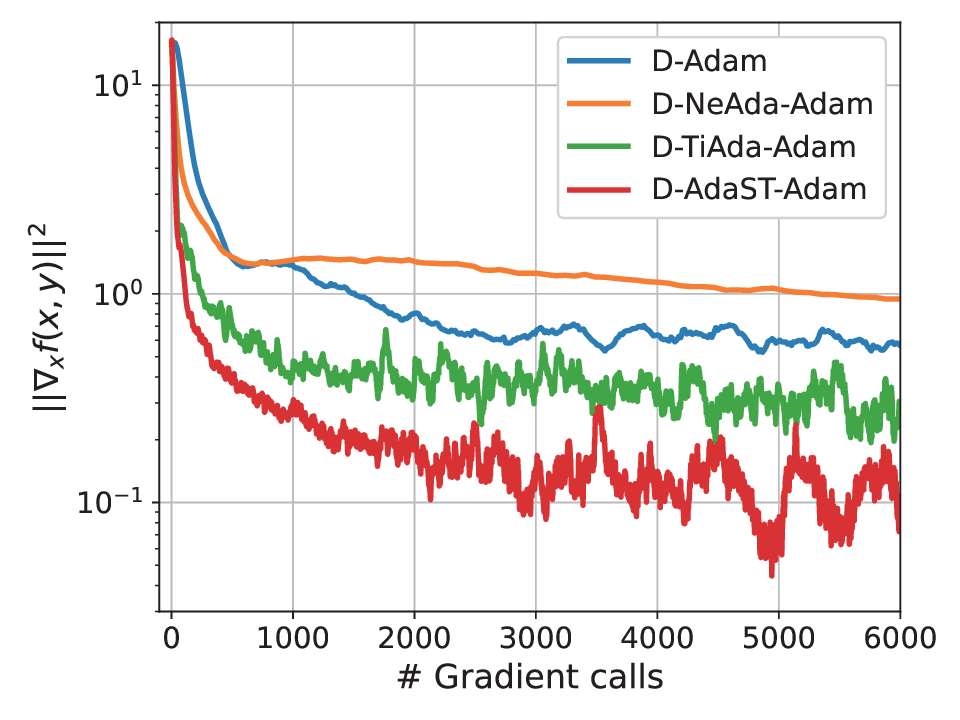} 
        \end{minipage}
    }
    \subfloat[dense]
    {
        \begin{minipage}[t]{0.25\textwidth}
            \centering 
            \includegraphics[width=\textwidth]{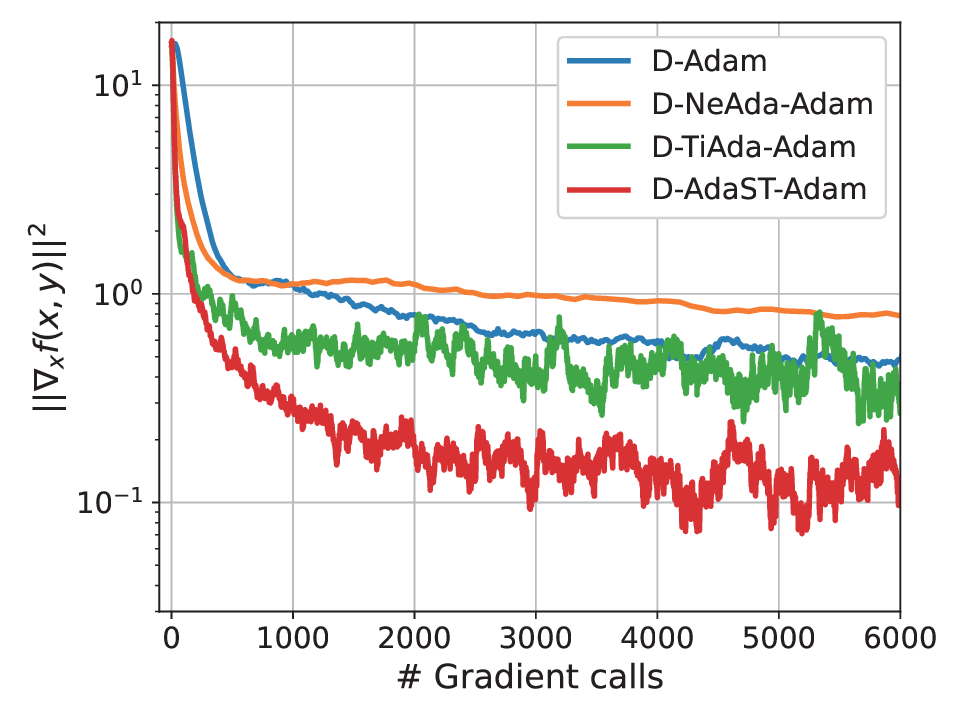}  
        \end{minipage}%
    }
    \caption{Performance comparison of training CNN on MNIST with $n=20$ nodes over exponential and dense graphs where each node has \textit{4 sample classes}.}
    \label{Fig_robust_4labels}
\end{figure}

\begin{figure}[!tb]
    \centering
    \subfloat[exp.] 
    {
        \begin{minipage}[t]{0.25\textwidth}
            \centering
            \includegraphics[width=\textwidth]{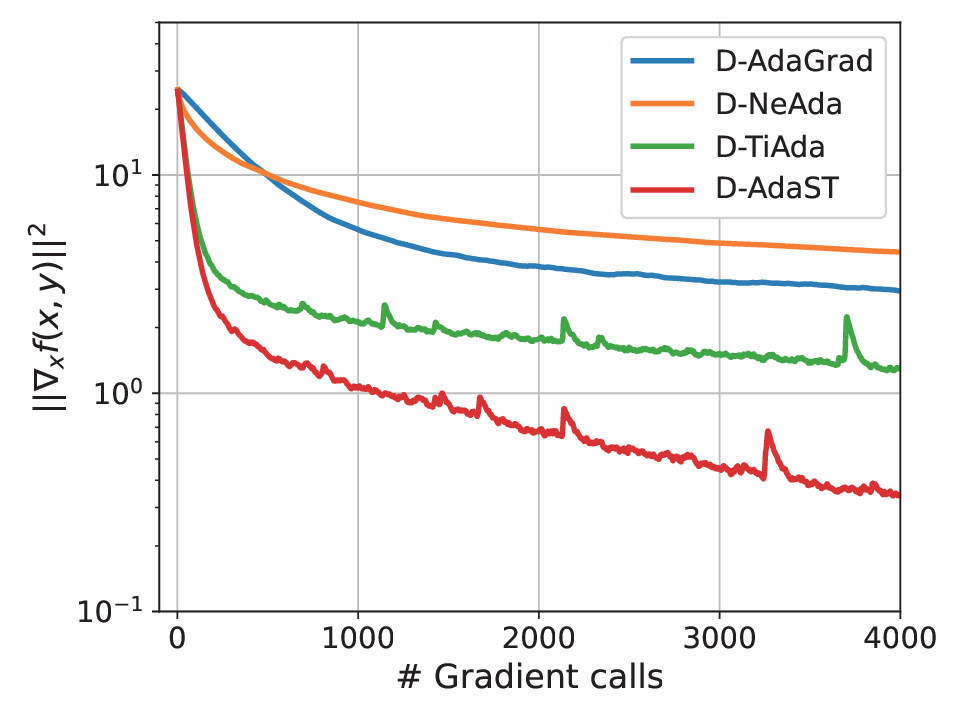}
        \end{minipage}
    }
    \subfloat[dense]
    {
        \begin{minipage}[t]{0.25\textwidth}
            \centering
            \includegraphics[width=\textwidth]{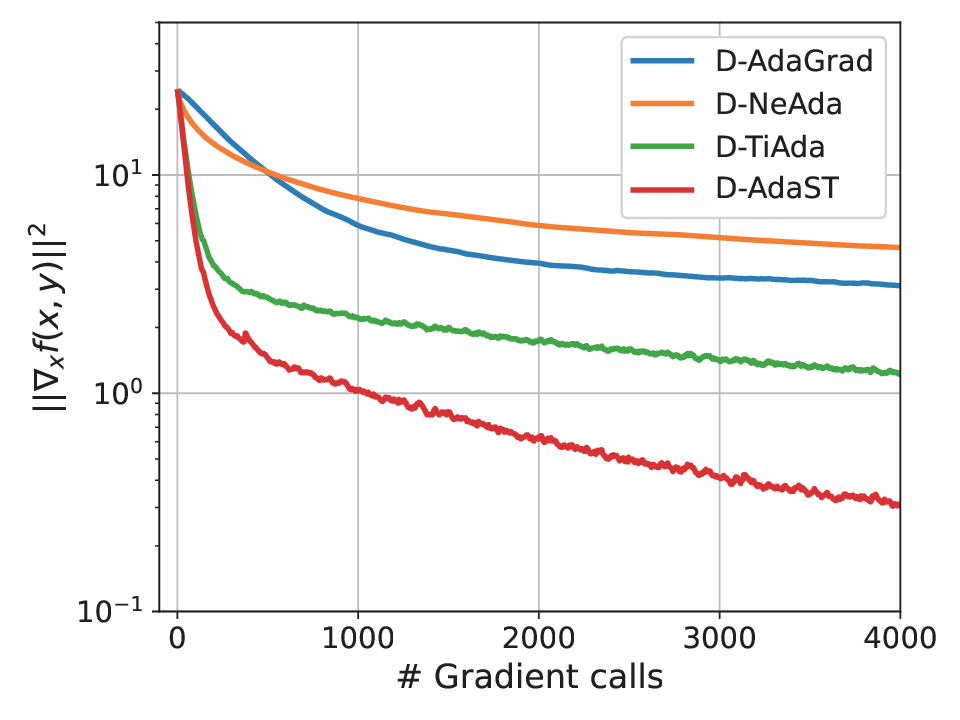}
        \end{minipage}
    }
    \subfloat[exp.]
    {
        \begin{minipage}[t]{0.25\textwidth}
            \centering
            \includegraphics[width=\textwidth]{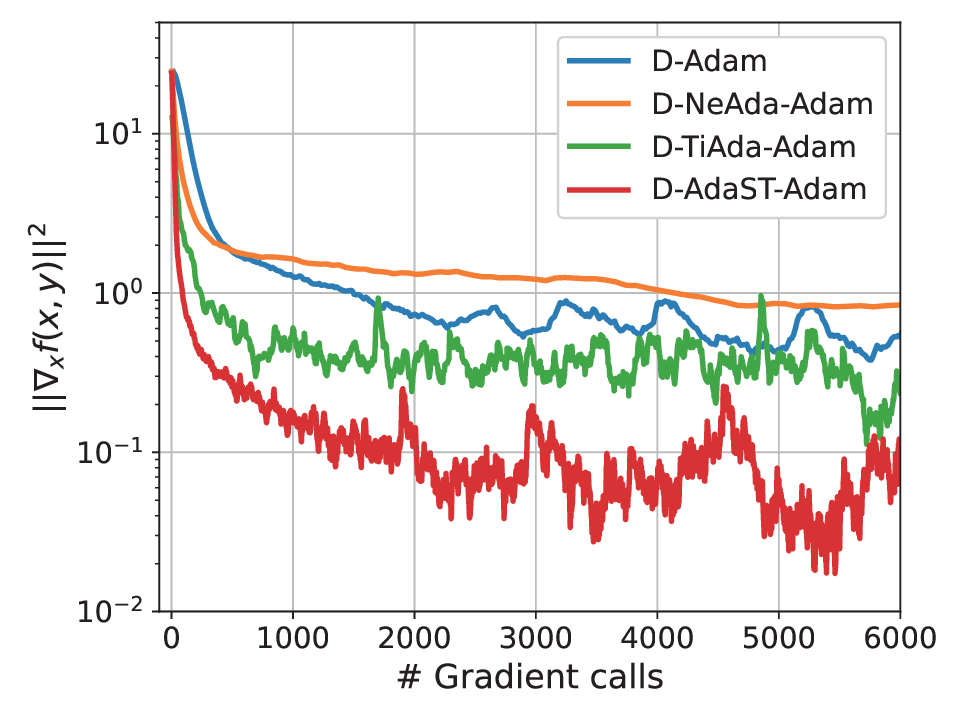} 
        \end{minipage}
    }
    \subfloat[dense]
    {
        \begin{minipage}[t]{0.25\textwidth}
            \centering
            \includegraphics[width=\textwidth]{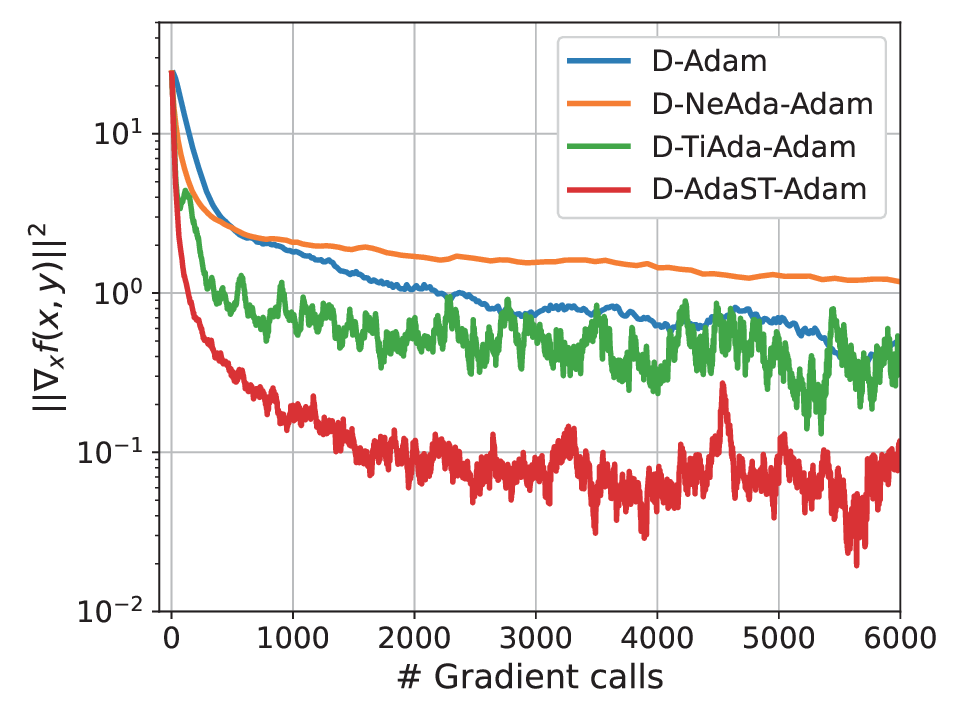}
        \end{minipage}
    }
    \caption{Performance comparison of training CNN on MNIST with $n=40$ nodes over exponential and dense graphs.  }
    \label{Fig_robust_n40}
\end{figure}

\subsection{Additional experiments with different choices of $\alpha$ and $\beta$}\label{App_add_experiments_a_b}

In this part, we evaluate the effect of the choices of $\alpha$ and $\beta$ on the performance of D-AdaST. In particular, we provide an additional experimental result on the synthetic quadratic objective functions \eqref{Eq_obj_synthetic}  with a larger ratio of initial stepsizes, i.e., $\gamma _x/\gamma _y=20$ (indicating faster minimization and slower maximization processes at the beginning). As shown in Figure \ref{Fig_transient}, it can be observed that the transient time (iteration before the inflection point) becomes longer as $\alpha-\beta$ decreases, while the convergence rate is relatively faster, which is consistent with Theorem 2 and the result in the centralized TiAda algorithm (c.f., Figure 5, Li et al., 2023).

\begin{figure}[t]
    \centering
    \subfloat
    {
        \begin{minipage}[t]{0.5\textwidth}
            \centering
            \includegraphics[width=\textwidth]{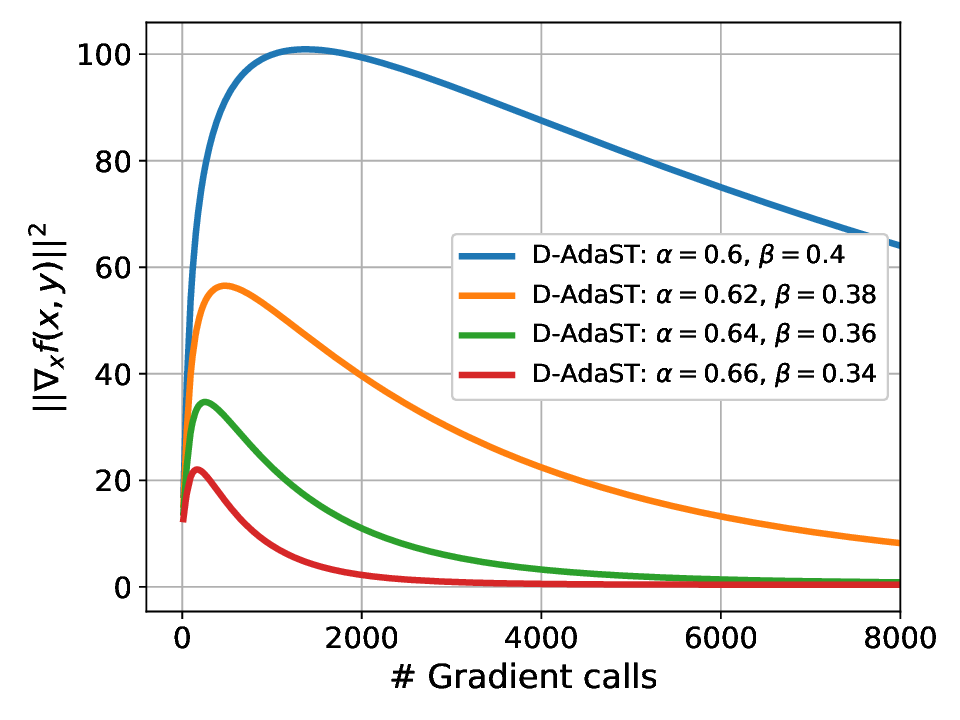}   
        \end{minipage}
    }
    \caption{Performance comparison of D-AdaST on quadratic functions over an exponential graph of $n=50$ nodes with different choices of $\alpha$ and $\beta$.}
    \label{Fig_transient}
\end{figure}

\section{Proof of the main results}\label{App_proof_main_results}

We recall here some definitions used in the main text. The averaged variables and the inconsistency are defined as follows:
\begin{equation*}
\begin{aligned}
&\bar{x}_k:=\frac{\mathbf{1}^T}{n}\mathbf{x}_k, \quad \bar{v}_k:=\frac{1}{n}\sum_{i=1}^n{v_{i,k}},\quad \left( \boldsymbol{\tilde{v}}_{k}^{-\alpha} \right) ^T:=\left[ \cdots , v_{i,k}^{-\alpha}-\bar{v}_{k}^{-\alpha}, \cdots \right],
\\
&\bar{y}_k:=\frac{\mathbf{1}^T}{n}\mathbf{y}_k, \quad \bar{u}_k:=\frac{1}{n}\sum_{i=1}^n{u_{i,k}},\quad \left( \boldsymbol{\tilde{u}}_{k}^{-\beta} \right) ^T:=\left[ \cdots , u_{i,k}^{-\beta}-\bar{u}_{k}^{-\beta}, \cdots \right].
\end{aligned}
\end{equation*}
The inconsistency of stepsizes of the primal and dual variables is defined as follows:
\begin{equation*}
\begin{aligned}
\zeta _{v}^{2}:=\underset{i\in \left[ n \right] , k>0}{\mathrm{sup}}\left\{ \left( v_{i,k}^{-\alpha}-\bar{v}_{k}^{-\alpha} \right) ^2/\left( \bar{v}_{k}^{-\alpha} \right) ^2 \right\} ,\,\,\zeta _{u}^{2}:=\underset{i\in \left[ n \right] , k>0}{\mathrm{sup}}\left\{ \left( u_{i,k}^{-\beta}-\bar{u}_{k}^{-\beta} \right) ^2/\left( \bar{u}_{k}^{-\beta} \right) ^2 \right\}.
\end{aligned}
\end{equation*}

\textbf{Proof Sketch.} The convergence analysis of the main results in Theorem \ref{Thm_con_dassc} is mainly based on carefully analyzing the average system as shown in \eqref{Eq_show_inconsistancy}, and the difference between the distributed system and the averaged system. In general, under Assumption \ref{Ass_SC_y}-\ref{Ass_graph}, we first give a telescoped descent lemma from $0$ to $K-1$ iterations in Lemma \ref{Lem_decent_lem}, which is upper bounded by the following key error terms:
\begin{itemize}
    \item $S_1:=\frac{1}{nK}\sum_{k=0}^{K-1}{\mathbb{E} \left[ \bar{v}_{k+1}^{-\alpha}\left\| \nabla _xF\left( \mathbf{x}_k,\mathbf{y}_k;\xi _{k}^{x} \right) \right\| ^2 \right]}$: The asymptotically decaying terms by adopting adaptive stepsize;
    \item $S_2:=\frac{1}{nK}\sum_{k=0}^{K-1}{\mathbb{E} \left[ \left\| \mathbf{x}_k-\mathbf{1}\bar{x}_k \right\| ^2+\left\| \mathbf{y}_k-\mathbf{1}\bar{y}_k \right\| ^2 \right]}$: The consensus error of $x$ and $y$ between the distributed system and the average system;
    \item $S_3:=\frac{1}{K}\sum_{k=0}^{K-1}{\mathbb{E} \left[ f\left( \bar{x}_k,y^*\left( \bar{x}_k \right) \right) -f\left( \bar{x}_k,\bar{y}_k \right) \right]}$: The optimality gap in dual variable $y$;
    \item $S_4:=\frac{1}{K}\sum_{k=0}^{K-1}{\mathbb{E} \left[ \left\| \frac{\left( \boldsymbol{\tilde{v}}_{k+1}^{-\alpha} \right) ^T}{n\bar{v}_{k+1}^{-\alpha}}\nabla _xF\left( \mathbf{x}_k,\mathbf{y}_k;\xi _{k}^{x} \right) \right\| ^2 
    \right]}$: The inconsistency of stepsize of $x$.
\end{itemize}
Next, we prove the contraction properties of these terms in Lemma \ref{Lem_decre_grad_sum}-\ref{Lem_S_1_k} and Lemma \ref{Lem_converge_incons_formal} respectively. Finally, these results are integrated into the descent lemma to complete the proof.
We note that the proof is not trivial in the sense that these terms are coupled and therefore are needed to be carefully analyzed. This proof can also be adapted to analyze the coordinate-wise adaptive stepsize variant of {\alg} as explained in Appendix \ref{Extend_to_OptionII}, which is of independent interest.

\subsection{Supporting lemmas}
In this part, we provide several supporting lemmas that have been shown in the existing literature, which are essential to the subsequent convergence analysis.

\begin{Lem}[Lemma A.2 in \citet{yang2022nest}] \label{Lem_senq_sum}
Let $\left\{ x_t \right\} _{t=0}^{T-1}$ be a sequence of non-negative real numbers, $x_0>0$ and $\alpha \in \left( 0,1 \right) $. Then we have,
\begin{equation}
\begin{aligned}
\left( \sum_{t=0}^{T-1}{x_t} \right) ^{1-\alpha}\leqslant \sum_{t=0}^{T-1}{\frac{x_t}{\left( \sum\nolimits_{k=0}^t{x_k} \right) ^{\alpha}}}\leqslant \frac{1}{1-\alpha}\left( \sum_{t=0}^{T-1}{x_t} \right) ^{1-\alpha}.
\end{aligned}
\end{equation}
When $\alpha=0$, we have
\begin{equation}
\begin{aligned}
\sum_{t=0}^{T-1}{\frac{x_t}{\left( \sum\nolimits_{k=0}^t{x_k} \right) ^{\alpha}}}\leqslant 1+\log \left( \frac{\sum\nolimits_{t=0}^{T-1}{x_t}}{x_0} \right).
\end{aligned}
\end{equation}
\end{Lem}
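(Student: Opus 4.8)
The plan is to reduce both inequalities to telescoping comparisons against a monotone function. I would first set $S_t := \sum_{k=0}^{t} x_k$ with the convention $S_{-1}:=0$, so that $x_t = S_t - S_{t-1}$ and, since $x_0>0$, every $S_t \geq x_0 > 0$ for $t\geq 0$; the middle quantity then reads $\sum_{t=0}^{T-1}(S_t - S_{t-1})/S_t^{\alpha}$, and the two outer bounds are the discrete analogues of $\int_0^{S_{T-1}} s^{-\alpha}\,ds$.

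For the \emph{upper} bound, I would exploit that $s\mapsto s^{-\alpha}$ is decreasing on $(0,\infty)$, so $s^{-\alpha}\geq S_t^{-\alpha}$ for all $s\in[S_{t-1},S_t]$; integrating over that interval gives $x_t/S_t^{\alpha}\leq \int_{S_{t-1}}^{S_t} s^{-\alpha}\,ds$. Summing over $t$ telescopes the integral to $\int_0^{S_{T-1}} s^{-\alpha}\,ds = \tfrac{1}{1-\alpha}S_{T-1}^{1-\alpha}$, which is finite precisely because $\alpha<1$, yielding the right-hand bound $\tfrac{1}{1-\alpha}(\sum_t x_t)^{1-\alpha}$.

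For the \emph{lower} bound the same comparison selects the wrong endpoint, so instead I would establish the elementary pointwise inequality $x_t/S_t^{\alpha}\geq S_t^{1-\alpha}-S_{t-1}^{1-\alpha}$. Expanding $x_t = S_t - S_{t-1}$, this is equivalent to $S_{t-1}S_t^{-\alpha}\leq S_{t-1}^{1-\alpha}$, i.e.\ to $(S_{t-1}/S_t)^{\alpha}\leq 1$, which holds by $S_{t-1}\leq S_t$ (with equality at $t=0$ via $S_{-1}=0$). Summing telescopes to $S_{T-1}^{1-\alpha}=(\sum_t x_t)^{1-\alpha}$, giving the left-hand bound. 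The remaining logarithmic estimate is the limiting exponent-one case of the same idea (the printed bound $1+\log(\cdot)$ being exactly what $\tfrac{1}{1-\alpha}$ degenerates to there): I would peel off the $t=0$ term, which equals $x_0/S_0=1$, and bound each later summand by $x_t/S_t\leq \int_{S_{t-1}}^{S_t}s^{-1}\,ds=\log(S_t/S_{t-1})$ using the monotonicity of $1/s$; the residual telescopes to $\log(S_{T-1}/x_0)$, for a total of $1+\log(S_{T-1}/x_0)$.

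The work here is routine; the only point demanding care is that the clean integral comparison delivers just one of the two directions, so the opposite bound needs the separate algebraic inequality above, and one must handle the initial index carefully, relying on $x_0>0$ to keep every denominator strictly positive and to make the $t=0$ contributions exact.
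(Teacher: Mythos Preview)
Your argument is correct. The paper does not supply its own proof of this lemma; it simply cites it as Lemma~A.2 of \citet{yang2022nest}, so there is nothing to compare against beyond noting that your integral-comparison/telescoping approach is exactly the standard one used to establish such AdaGrad-type sums. One small remark: the second display in the statement is evidently a typo and should read ``when $\alpha=1$'' rather than ``when $\alpha=0$''; you implicitly recognized this and proved the intended $\alpha=1$ logarithmic bound, peeling off the $t=0$ term and bounding the rest by $\int_{S_{t-1}}^{S_t} s^{-1}\,ds$, which is the right thing to do.
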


\begin{Lem}\label{Lem_envelope}
    Suppose Assumption \ref{Ass_SC_y} and \ref{Ass_joint_smo} hold. Define $\varPhi \left( x \right) :=f\left( x,y^*\left( x \right) \right) $ as the envelope function and $y^*\left( x \right) =\mathrm{arg} \underset{y\in \mathcal{Y}}{\max}f\left( x,y \right) $. Then, we have,
    \begin{itemize}
        \item $\varPhi \left( \cdot \right)$ is $L_{\varPhi}$-smooth with $L_{\varPhi}=L\left( 1+\kappa \right) $,  and $\nabla \varPhi \left( x \right) =\nabla _xf\left( x,y^*\left( x \right) \right) $ (c.f., Lemma 4.3 in \citet{lin2020gradient});
        \item $y^*\left( \cdot \right) $ is $\kappa$-Lipschitz and $\hat{L}$-smooth with $\hat{L}=\kappa \left( 1+\kappa \right) ^2$(c.f., Lemma 2 in \citet{chen2021closing}).
    \end{itemize}
\end{Lem}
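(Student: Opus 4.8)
The plan is to first observe that, since $f=\frac1n\sum_i f_i$, averaging preserves each structural property assumed on the $f_i$: $f(x,\cdot)$ is $\mu$-strongly concave (Assumption \ref{Ass_SC_y}), $f$ is jointly $L$-smooth, and $\nabla^2_{zy}f$ is $L$-Lipschitz (Assumption \ref{Ass_joint_smo}). Hence $f$ itself meets exactly the hypotheses of the two cited results, so one could invoke Lemma 4.3 of \citet{lin2020gradient} and Lemma 2 of \citet{chen2021closing} verbatim. To keep the argument self-contained I would instead reprove both items. Strong concavity makes $y^*(x)=\arg\max_{y\in\mathcal{Y}}f(x,y)$ a singleton, which is what lets all of the following go through.

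I would first establish the $\kappa$-Lipschitzness of $y^*(\cdot)$ and then bootstrap it to the envelope smoothness. For Lipschitzness, write the optimality (variational) inequality $\langle\nabla_y f(x,y^*(x)),y-y^*(x)\rangle\le 0$ at two points $x,x'$, add them, and split off a cross term; $\mu$-strong concavity in $y$ controls one piece from below by $\mu\|y^*(x)-y^*(x')\|^2$ while joint $L$-smoothness plus Cauchy--Schwarz bounds the other by $L\|x-x'\|\,\|y^*(x)-y^*(x')\|$, yielding $\|y^*(x)-y^*(x')\|\le(L/\mu)\|x-x'\|=\kappa\|x-x'\|$. For the envelope, Danskin's theorem gives $\nabla\varPhi(x)=\nabla_x f(x,y^*(x))$ (the $\nabla_y f$ term vanishes at the maximizer); then, inserting the intermediate point $(x',y^*(x))$ and using $L$-smoothness twice together with the just-proved Lipschitz bound,
\begin{equation*}
\|\nabla\varPhi(x)-\nabla\varPhi(x')\|\le L\|x-x'\|+L\|y^*(x)-y^*(x')\|\le L(1+\kappa)\|x-x'\|,
\end{equation*}
which is precisely $L_{\varPhi}=L(1+\kappa)$.

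The main obstacle is the $\hat{L}$-smoothness of $y^*(\cdot)$, since it requires differentiating the optimality condition. In the unconstrained case the implicit function theorem gives $\nabla y^*(x)=-A(x)^{-1}B(x)$ with $A(x)=\nabla^2_{yy}f(x,y^*(x))$ and $B(x)=\nabla^2_{yx}f(x,y^*(x))$; strong concavity ensures $\|A(x)^{-1}\|\le 1/\mu$ and smoothness gives $\|B(x)\|\le L$. I would then estimate $\nabla y^*(x)-\nabla y^*(x')$ by the standard matrix-resolvent identity $A(x')^{-1}(A(x)-A(x'))A(x)^{-1}B(x')-A(x)^{-1}(B(x)-B(x'))$, bounding the operator-norm differences of $A$ and $B$ via the second-order Lipschitz assumption (each of order $L\sqrt{\|x-x'\|^2+\|y^*(x)-y^*(x')\|^2}\le L\sqrt{1+\kappa^2}\,\|x-x'\|$), which assembles into $\kappa(1+\kappa)\sqrt{1+\kappa^2}\,\|x-x'\|\le\kappa(1+\kappa)^2\|x-x'\|$, matching $\hat{L}=\kappa(1+\kappa)^2$. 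The genuinely delicate point is the constrained case $\mathcal{Y}\subsetneq\mathbb{R}^d$, where $y^*(x)$ may sit on $\partial\mathcal{Y}$ and the bare implicit function theorem no longer applies; here I would follow \citet{chen2021closing} and differentiate the fixed-point map $y^*(x)=\mathcal{P}_{\mathcal{Y}}(y^*(x)+c\,\nabla_y f(x,y^*(x)))$ using non-expansiveness of the projection, which keeps the same constants and closes the proof.
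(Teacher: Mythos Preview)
The paper does not supply its own proof of this lemma; the two bullet points carry in-line citations to Lemma~4.3 of \citet{lin2020gradient} and Lemma~2 of \citet{chen2021closing}, and the paper simply invokes those results as black boxes. Your proposal goes further by reconstructing the arguments from scratch, and the derivations you sketch are correct and reproduce exactly the stated constants $L_\varPhi=L(1+\kappa)$ and $\hat L=\kappa(1+\kappa)^2$. The one place to be a little careful is your parenthetical ``the $\nabla_y f$ term vanishes at the maximizer'' when justifying $\nabla\varPhi(x)=\nabla_x f(x,y^*(x))$: that literal vanishing only holds when $y^*(x)$ is interior to $\mathcal Y$, whereas Danskin's theorem gives the formula directly in the constrained case without needing it; and, as you already flag, the $\hat L$-smoothness in the constrained case requires the projection fixed-point argument of \citet{chen2021closing} rather than the bare implicit function theorem. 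With those caveats noted, your self-contained route is sound and strictly more informative than the paper's citation-only treatment.
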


\subsection{Key Lemmas}
In this subsection, we give the key lemmas to help the analysis of the main results. For simplicity, we define
$\varDelta _k:=\left\| \mathbf{x}_k-\mathbf{1}\bar{x}_k \right\| ^2+\left\| \mathbf{y}_k-\mathbf{1}\bar{y}_k \right\| ^2$ as the consensus error for primal and dual variables.
Then, we have the following lemmas.

\begin{Lem}[Descent lemma]\label{Lem_decent_lem}
Suppose Assumption \ref{Ass_SC_y}-\ref{Ass_graph} hold. Then, we have
\begin{equation}\label{Eq_decent_lem}
\begin{aligned}
&\frac{1}{K}\sum_{k=0}^{K-1}{\mathbb{E} \left[ \left\| \nabla \varPhi \left( \bar{x}_k \right) \right\| ^2 \right]}\,\,
\\
&\leqslant \frac{8C^{2\alpha}\left( \varPhi ^{\max}-\varPhi ^* \right)}{\gamma _xK^{1-\alpha}}-\frac{4}{K}\sum_{k=0}^{K-1}{\mathbb{E} \left[ \left\| \nabla _xf\left( \bar{x}_k,\bar{y}_k \right) \right\| ^2 \right]}
\\
&+8\gamma _xL_{\varPhi}\left( 1+\zeta _{v}^{2} \right) \underset{S_1}{\underbrace{\frac{1}{nK}\sum_{k=0}^{K-1}{\mathbb{E} \left[ \bar{v}_{k+1}^{-\alpha}\left\| \nabla _xF\left( \mathbf{x}_k,\mathbf{y}_k;\xi _{k}^{x} \right) \right\| ^2 \right]}}}+8L^2\underset{S_2}{\underbrace{\frac{1}{nK}\sum_{k=0}^{K-1}{\mathbb{E} \left[ \varDelta _k \right]}}}
\\
&+8\kappa L\underset{S_3}{\underbrace{\frac{1}{K}\sum_{k=0}^{K-1}{\mathbb{E} \left[ f\left( \bar{x}_k,y^*\left( \bar{x}_k \right) \right) -f\left( \bar{x}_k,\bar{y}_k \right) \right]}}}+16\underset{S_4}{\underbrace{\frac{1}{K}\sum_{k=0}^{K-1}{\mathbb{E} \left[ \left\| \frac{\left( \boldsymbol{\tilde{v}}_{k+1}^{-\alpha} \right) ^T}{n\bar{v}_{k+1}^{-\alpha}}\nabla _xF\left( \mathbf{x}_k,\mathbf{y}_k;\xi _{k}^{x} \right) \right\| ^2 \right]}}},
\end{aligned}
\end{equation}
where $\kappa:=L/{\mu}$ is the condition number of the function in $y$, $\varPhi ^{\max}=\underset{x}{\max}\,\,\varPhi \left( x \right) , \varPhi ^*=\underset{x}{\min}\,\,\varPhi \left( x \right)$.
\end{Lem}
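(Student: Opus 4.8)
The plan is to perform a one-step descent analysis on the primal potential $\varPhi$ evaluated at the \emph{averaged} iterate $\bar x_k$, and then telescope from $0$ to $K-1$. The first ingredient is the averaged dynamics: since $W$ is doubly stochastic (Assumption \ref{Ass_graph}), left-multiplying the primal update by $\mathbf 1^T/n$ annihilates $W$ and gives $\bar x_{k+1}=\bar x_k-\gamma_x\frac{\mathbf 1^T}{n}V_{k+1}^{-\alpha}\nabla_xF(\mathbf x_k,\mathbf y_k;\xi_k^x)$. Writing the diagonal preconditioner as $V_{k+1}^{-\alpha}=\bar v_{k+1}^{-\alpha}\mathbf I+\mathrm{diag}(\tilde v_{k+1}^{-\alpha})$ splits the step exactly as in \eqref{Eq_show_inconsistancy} into an ``adaptive descent'' direction $\bar v_{k+1}^{-\alpha}\frac{\mathbf 1^T}{n}\nabla_xF$ and an ``inconsistency'' direction $\frac{(\tilde v_{k+1}^{-\alpha})^T}{n}\nabla_xF$.

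Next I would invoke the $L_\varPhi$-smoothness of $\varPhi$ together with $\nabla\varPhi(\bar x_k)=\nabla_xf(\bar x_k,y^*(\bar x_k))$ from Lemma \ref{Lem_envelope} to write the quadratic-upper-bound $\varPhi(\bar x_{k+1})\le\varPhi(\bar x_k)+\langle\nabla\varPhi(\bar x_k),\bar x_{k+1}-\bar x_k\rangle+\frac{L_\varPhi}{2}\|\bar x_{k+1}-\bar x_k\|^2$. For the quadratic term I apply $\|a+b\|^2\le 2\|a\|^2+2\|b\|^2$ to the two directions above, use $\bar v_{k+1}^{-\alpha}\le c^{-\alpha}$ to downgrade the square $\bar v_{k+1}^{-2\alpha}$ to $\bar v_{k+1}^{-\alpha}$, and use the bound $\sum_i(v_{i,k+1}^{-\alpha})^2\le 2(1+\zeta_v^2)n(\bar v_{k+1}^{-\alpha})^2$ that follows directly from the definition of $\zeta_v^2$. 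Together with $\|\nabla_xF(\mathbf x_k,\mathbf y_k;\xi_k^x)\|^2=\sum_i\|\nabla_xF_i\|^2$, this produces precisely $S_1$ (carrying the $(1+\zeta_v^2)$ factor) and the inconsistency term $S_4$.

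The cross term is handled by taking the conditional expectation so that (Assumption \ref{Ass_bound_gra}) the stochastic gradient is replaced by $\frac1n\sum_i\nabla_xf_i(x_{i,k},y_{i,k})$, followed by a chain of two comparisons. Comparing $\frac1n\sum_i\nabla_xf_i(x_{i,k},y_{i,k})$ with $\nabla_xf(\bar x_k,\bar y_k)$ via $L$-smoothness (Assumption \ref{Ass_joint_smo}) pays the consensus error $\varDelta_k$, i.e. $S_2$; comparing $\nabla_xf(\bar x_k,\bar y_k)$ with $\nabla\varPhi(\bar x_k)=\nabla_xf(\bar x_k,y^*(\bar x_k))$ via $L$-smoothness in $y$ and the quadratic growth $\|\bar y_k-y^*(\bar x_k)\|^2\le\frac2\mu\bigl(f(\bar x_k,y^*(\bar x_k))-f(\bar x_k,\bar y_k)\bigr)$ implied by $\mu$-strong concavity (Assumption \ref{Ass_SC_y}) pays the dual gap $S_3$. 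After Young's inequality these yield the desired $\|\nabla\varPhi(\bar x_k)\|^2$ contribution together with the retained negative term $-\frac4K\sum_k\|\nabla_xf(\bar x_k,\bar y_k)\|^2$, which is kept for the companion dual-descent analysis.

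Finally I would telescope. Monotonicity of the accumulated gradient norms gives $\bar v_{k+1}\le\bar v_K$, so the smallest stepsize factor $\bar v_K^{-\alpha}$ can be pulled out of the principal descent term; since Assumption \ref{Ass_bound_gra} forces $\bar v_K\le c+KC^2=\tilde{\mathcal O}(KC^2)$, one has $\bar v_K^{-\alpha}\gtrsim (KC^2)^{-\alpha}$, which converts $\sum_k\gamma_x\bar v_{k+1}^{-\alpha}\|\nabla\varPhi(\bar x_k)\|^2$ into the claimed $\frac{C^{2\alpha}}{\gamma_xK^{1-\alpha}}(\varPhi^{\max}-\varPhi^*)$ scaling after dividing by $K$ and bounding $\varPhi(\bar x_0)-\varPhi(\bar x_K)\le\varPhi^{\max}-\varPhi^*$. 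The \textbf{main obstacle} is the biased coupling in the cross term between the adaptive factor $\bar v_{k+1}^{-\alpha}$ and the step-$k$ stochastic gradient it was built from, so that $\mathbb{E}[\bar v_{k+1}^{-\alpha}\nabla_xF]\neq\bar v_{k+1}^{-\alpha}\mathbb{E}[\nabla_xF]$; a standard AdaGrad-type decoupling (comparing $\bar v_{k+1}$ against an $\xi_k^x$-independent surrogate, controlled via boundedness) is required. A secondary difficulty is packaging all inconsistency contributions cleanly into $S_4$ and the $(1+\zeta_v^2)$ factor without contaminating the leading descent term, which is exactly what lets the tracking mechanism pay off later through Lemma \ref{Lem_converge_incons_formal}.
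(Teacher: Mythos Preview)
Your high-level plan (smoothness of $\varPhi$, split the averaged step into adaptive descent plus inconsistency, compare $\tfrac1n\sum_i\nabla_xf_i(x_{i,k},y_{i,k})\to\nabla_xf(\bar x_k,\bar y_k)\to\nabla\varPhi(\bar x_k)$, telescope) matches the paper. But there is one decisive trick you are missing, and without it your proposed telescoping does \emph{not} reproduce the stated bound.

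The paper does not carry the factor $\gamma_x\bar v_{k+1}^{-\alpha}$ through the cross term at all. Immediately after writing the $L_\varPhi$-smoothness inequality it divides both sides by $\gamma_x\bar v_{k+1}^{-\alpha}$, so that the main inner product becomes $-\langle\nabla\varPhi(\bar x_k),\tfrac{\mathbf 1^T}{n}\nabla_xF(\mathbf x_k,\mathbf y_k;\xi_k^x)\rangle$ \emph{with no adaptive factor left}. Now the ``main obstacle'' you flag (correlation of $\bar v_{k+1}^{-\alpha}$ with $\xi_k^x$) simply never arises: one takes conditional expectation, replaces the stochastic gradient by the true one, and proceeds with the two comparisons you describe. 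Likewise the quadratic term, after this division, naturally carries a single $\bar v_{k+1}^{-\alpha}$, giving exactly $S_1$ without your extra ``downgrade'' step.

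This same division is what makes the telescoping work. The left side is summed as $\sum_k\mathbb E[\tfrac{\varPhi(\bar x_k)-\varPhi(\bar x_{k+1})}{\gamma_x\bar v_{k+1}^{-\alpha}}]$ and handled by Abel summation (write it as a true telescope of $\varPhi(\bar x_k)\bar v_k^{\alpha}$ plus $\sum_k\varPhi(\bar x_k)(\bar v_{k+1}^{\alpha}-\bar v_k^{\alpha})$, use monotonicity of $\bar v_k$ and $\varPhi\le\varPhi^{\max}$), yielding $\tfrac{(\varPhi^{\max}-\varPhi^*)\bar v_K^{\alpha}}{\gamma_x}\le\tfrac{(\varPhi^{\max}-\varPhi^*)(KC^2)^{\alpha}}{\gamma_x}$. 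Crucially, only this $\varPhi$-difference term picks up the $\bar v_K^{\alpha}\sim K^{\alpha}$ factor; the error contributions $S_2,S_3,S_4$ and the retained $-\tfrac12\|\nabla_xf(\bar x_k,\bar y_k)\|^2$ come from the (stepsize-free) right side and are summed as is.

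Your telescoping, by contrast, leaves $\gamma_x\bar v_{k+1}^{-\alpha}$ on \emph{every} term from the cross-product analysis. After you pull out $\bar v_K^{-\alpha}$ from the descent term and divide through, each error term acquires a factor $\bar v_K^{\alpha}/\bar v_{k+1}^{\alpha}$, which can be as large as $(KC^2)^{\alpha}c^{-\alpha}\sim K^{\alpha}$. That would inflate $S_2,S_3,S_4$ by $K^{\alpha}$ and destroy the downstream rate (e.g.\ $K^{\alpha}\cdot K^{-\beta}=K^{\alpha-\beta}$ does not vanish). It also prevents you from keeping the negative term $-\tfrac4K\sum_k\|\nabla_xf(\bar x_k,\bar y_k)\|^2$ with the constant coefficient needed to cancel the matching positive term arising later in Lemma~\ref{Lem_y_opt_gap_phase_II}. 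The fix is exactly the division-then-Abel-summation above; your AdaGrad-style decoupling is not needed.
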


\begin{proof}
By the smoothness of $\varPhi$ given in Lemma \ref{Lem_envelope}, i.e., 
\begin{equation*}
\begin{aligned}
\varPhi \left( \bar{x}_{k+1} \right) -\varPhi \left( \bar{x}_k \right) \leqslant \left< \nabla \varPhi \left( \bar{x}_k \right) , \bar{x}_{k+1}-\bar{x}_k \right> +\frac{L_{\varPhi}}{2}\left\| \bar{x}_{k+1}-\bar{x}_k \right\| ^2,
\end{aligned}
\end{equation*}
and noticing that the scalar $\bar{v}_k, \bar{u}_k$ are random variables, we have 
\begin{equation}\label{Eq_decent_lem_1}
\begin{aligned}
&\mathbb{E} \left[ \frac{\varPhi \left( \bar{x}_{k+1} \right) -\varPhi \left( \bar{x}_k \right)}{\gamma _x\bar{v}_{k+1}^{-\alpha}} \right] 
\\
&\leqslant -\mathbb{E} \left[ \left< \nabla \varPhi \left( \bar{x}_k \right) ,\frac{\mathbf{1}^T}{n}\nabla _xF\left( \mathbf{x}_k,\mathbf{y}_k;\xi _k \right) \right> \right] -\mathbb{E} \left[ \left< \nabla \varPhi \left( \bar{x}_k \right) ,\frac{\left( \boldsymbol{\tilde{v}}_{k+1}^{-\alpha} \right) ^T}{n\bar{v}_{k+1}^{-\alpha}}\nabla _xF\left( \mathbf{x}_k,\mathbf{y}_k;\xi _{k}^{x} \right) \right> \right] 
\\
&+\frac{\gamma _xL_{\varPhi}}{2}\mathbb{E} \left[ \frac{1}{\bar{v}_{k+1}^{-\alpha}}\left\| \left( \frac{\bar{v}_{k+1}^{-\alpha}\mathbf{1}^T}{n}+\frac{\left( \boldsymbol{\tilde{v}}_{k+1}^{-\alpha} \right) ^T}{n} \right) \nabla _xF\left( \mathbf{x}_k,\mathbf{y}_k;\xi _{k}^{x} \right) \right\| ^2 \right],
\end{aligned}
\end{equation}
where we have used the definition of $\bar{x}_{k+1}$ as presented in \eqref{Eq_show_inconsistancy}.
Then, we bound the inner-product terms on the RHS. Firstly,
\begin{equation}
\begin{aligned}
&-\mathbb{E} \left[ \left< \nabla \varPhi \left( \bar{x}_k \right) ,\frac{\mathbf{1}^T}{n}\nabla _xF\left( \mathbf{x}_k,\mathbf{y}_k;\xi _{k}^{x} \right) \right> \right] 
\\
&=-\mathbb{E} \left[ \left< \nabla \varPhi \left( \bar{x}_k \right) ,\frac{\mathbf{1}^T}{n}\nabla _xF\left( \mathbf{x}_k,\mathbf{y}_k \right) -\frac{\mathbf{1}^T}{n}\nabla _xF\left( \mathbf{1}\bar{x}_k,\mathbf{1}\bar{y}_k \right) +\frac{\mathbf{1}^T}{n}\nabla _xF\left( \mathbf{1}\bar{x}_k,\mathbf{1}\bar{y}_k \right) \right> \right] 
\\
&\leqslant \frac{1}{4}\mathbb{E} \left[ \left\| \nabla \varPhi \left( \bar{x}_k \right) \right\| ^2 \right] +\mathbb{E} \left[ \left\| \frac{\mathbf{1}^T}{n}\nabla _xF\left( \mathbf{x}_k,\mathbf{y}_k \right) -\frac{\mathbf{1}^T}{n}\nabla _xF\left( \mathbf{1}\bar{x}_k,\mathbf{1}\bar{y}_k \right) \right\| ^2 \right] 
\\
&+\frac{1}{2}\left( \mathbb{E} \left[ \left\| \nabla \varPhi \left( \bar{x}_k \right) -\nabla _xf\left( \bar{x}_k,\bar{y}_k \right) \right\| ^2 \right] -\mathbb{E} \left[ \left\| \nabla \varPhi \left( \bar{x}_k \right) \right\| ^2 \right] -\mathbb{E} \left[ \left\| \nabla _xf\left( \bar{x}_k,\bar{y}_k \right) \right\| ^2 \right] \right) 
\\
&\leqslant -\frac{1}{4}\mathbb{E} \left[ \left\| \nabla \varPhi \left( \bar{x}_k \right) \right\| ^2 \right] +\frac{L^2}{n}\mathbb{E} \left[ \varDelta _k \right] +\frac{L^2}{2}\mathbb{E} \left[ \left\| \bar{y}_k-y^*\left( \bar{x}_k \right) \right\| ^2 \right] -\frac{1}{2}\mathbb{E} \left[ \left\| \nabla _xf\left( \bar{x}_k,\bar{y}_k \right) \right\| ^2 \right] .
\end{aligned}
\end{equation}
wherein the last inequality we have used the smoothness of the objective functions.
Then, for the second inner-product in (\ref{Eq_decent_lem_1}), using Young's inequality we get
\begin{equation}
\begin{aligned}
&-\mathbb{E} \left[ \left< \nabla \varPhi \left( \bar{x}_k \right) ,\frac{\left( \boldsymbol{\tilde{v}}_{k+1}^{-\alpha} \right) ^T}{n\bar{v}_{k+1}^{-\alpha}}\nabla _xF\left( \mathbf{x}_k,\mathbf{y}_k;\xi _{k}^{x} \right) \right> \right] 
\\
&\leqslant \frac{1}{8}\mathbb{E} \left[ \left\| \nabla \varPhi \left( \bar{x}_k \right) \right\| ^2 \right] +2\mathbb{E} \left[ \left\| \frac{\left( \boldsymbol{\tilde{v}}_{k+1}^{-\alpha} \right) ^T}{n\bar{v}_{k+1}^{-\alpha}}\nabla _xF\left( \mathbf{x}_k,\mathbf{y}_k;\xi _{k}^{x} \right) \right\| ^2 \right] .
\end{aligned}
\end{equation}
Then, for the last term on the RHS of \eqref{Eq_decent_lem}, recalling the definition of stepsize inconsistency in \eqref{Def_hete_stepsize}, we have
\begin{equation}
\begin{aligned}
&\frac{\gamma _xL_{\varPhi}}{2}\mathbb{E} \left[ \frac{1}{\bar{v}_{k+1}^{-\alpha}}\left\| \left( \frac{\bar{v}_{k+1}^{-\alpha}\mathbf{1}^T}{n}+\frac{\left( \boldsymbol{\tilde{v}}_{k+1}^{-\alpha} \right) ^T}{n} \right) \nabla _xF\left( \mathbf{x}_k,\mathbf{y}_k;\xi _{k}^{x} \right) \right\| ^2 \right] 
\\
&\leqslant \frac{\gamma _xL_{\varPhi}\left( 1+\zeta _{v}^{2} \right)}{n}\mathbb{E} \left[ \bar{v}_{k+1}^{-\alpha}\left\| \nabla _xF\left( \mathbf{x}_k,\mathbf{y}_k;\xi _{k}^{x} \right) \right\| ^2 \right] .
\end{aligned}
\end{equation}

Plugging the obtained inequalities into \eqref{Eq_decent_lem} and telescoping the terms, we get
\begin{equation}
\begin{aligned}
&\sum_{k=0}^{K-1}{\mathbb{E} \left[ \left\| \nabla \varPhi \left( \bar{x}_k \right) \right\| ^2 \right]}
\\
&\leqslant 8\sum_{k=0}^{K-1}{\mathbb{E} \left[ \frac{\varPhi \left( \bar{x}_k \right) -\varPhi \left( \bar{x}_{k+1} \right)}{\gamma _x\bar{v}_{k}^{-\alpha}} \right]}-4\sum_{k=0}^{K-1}{\mathbb{E} \left[ \left\| \nabla _xf\left( \bar{x}_k,\bar{y}_k \right) \right\| ^2 \right]}
\\
&+4L^2\sum_{k=0}^{K-1}{\mathbb{E} \left[ \left\| \bar{y}_k-\bar{y}^* \right\| ^2 \right]}+\frac{8L^2}{n}\sum_{k=0}^{K-1}{\mathbb{E} \left[ \varDelta _k \right]}
\\
&+\frac{8\gamma _xL_{\varPhi}\left( 1+\zeta _{v}^{2} \right)}{n}\sum_{k=0}^{K-1}{\mathbb{E} \left[ \bar{v}_{k}^{-\alpha}\left\| \nabla _xF\left( \mathbf{x}_k,\mathbf{y}_k;\xi _{k}^{x} \right) \right\| ^2 \right]}
\\
&+16\sum_{k=0}^{K-1}{\mathbb{E} \left[ \left\| \frac{\left( \boldsymbol{\tilde{v}}_{k+1}^{-\alpha} \right) ^T}{n\bar{v}_{k+1}^{-\alpha}}\nabla _xF\left( \mathbf{x}_k,\mathbf{y}_k;\xi _{k}^{x} \right) \right\| ^2 \right]}.
\end{aligned}
\end{equation}
Now it remains to bound the first term on the RHS of the above inequality. With the help of Assumption \ref{Ass_bound_gra}, we have
\begin{equation}
\begin{aligned}
&\sum_{k=0}^{K-1}{\mathbb{E} \left[ \frac{\varPhi \left( \bar{x}_k \right) -\varPhi \left( \bar{x}_{k+1} \right)}{\gamma _x\bar{v}_{k+1}^{-\alpha}} \right]}
\\
&=\sum_{k=0}^{K-1}{\mathbb{E} \left[ \frac{\varPhi \left( \bar{x}_k \right)}{\gamma _x\bar{v}_{k}^{-\alpha}}-\frac{\varPhi \left( \bar{x}_{k+1} \right)}{\gamma _x\bar{v}_{k+1}^{-\alpha}}+\varPhi \left( \bar{x}_k \right) \left( \frac{1}{\gamma _x\bar{v}_{k+1}^{-\alpha}}-\frac{1}{\gamma _x\bar{v}_{k}^{-\alpha}} \right) \right]}
\\
&\leqslant \mathbb{E} \left[ \frac{\varPhi _{\max}}{\gamma _x\bar{v}_{0}^{-\alpha}}-\frac{\varPhi ^*}{\gamma _x\bar{v}_{K}^{-\alpha}} \right] +\sum_{k=0}^{K-1}{\mathbb{E} \left[ \varPhi _{\max}\left( \frac{1}{\gamma _x\bar{v}_{k+1}^{-\alpha}}-\frac{1}{\gamma _x\bar{v}_{k}^{-\alpha}} \right) \right]}
\\
&\leqslant \frac{\left( \varPhi _{\max}-\varPhi ^* \right)}{\gamma _x}\mathbb{E} \left[ \bar{v}_{K}^{\alpha} \right] 
\\
&\leqslant \frac{\left( \varPhi _{\max}-\varPhi ^* \right) \left( KC^2 \right) ^{\alpha}}{\gamma _x}.
\end{aligned}
\end{equation}
Noticing that
$\mathbb{E} \left[ \left\| \bar{y}_k-y^*\left( \bar{x}_k \right) \right\| ^2 \right] \leqslant \frac{2}{\mu}\mathbb{E} \left[ f\left( \bar{x}_k,y^*\left( \bar{x}_k \right) \right) -f\left( \bar{x}_k,\bar{y}_k \right) \right] $, we thus complete the proof.
\end{proof}

Next, we need to bound the last four terms $S_1$-$S_4$ in (\ref{Eq_decent_lem}) respectively. For $S_1$, we have the asymptotic convergence for both primal and dual variables in the following lemma.

\begin{Lem}\label{Lem_decre_grad_sum}
Suppose Assumption \ref{Ass_SC_y}-\ref{Ass_graph} hold. Then, we have
\begin{equation}
\begin{aligned}
&\frac{1}{nK}\sum_{k=0}^{K-1}{\mathbb{E} \left[ \bar{v}_{k+1}^{-\alpha}\left\| \nabla _xF\left( \mathbf{x}_k,\mathbf{y}_k;\xi _{k}^{x} \right) \right\| ^2 \right]}\leqslant \frac{{C^{2-2\alpha}}}{\left( 1-\alpha \right) K^{\alpha}},
\end{aligned}
\end{equation}
and
\begin{equation}
\begin{aligned}
&\frac{1}{nK}\sum_{k=0}^{K-1}{\mathbb{E} \left[ \bar{u}_{k+1}^{-\beta}\left\| \nabla _yF\left( \mathbf{x}_k,\mathbf{y}_k;\xi _{k}^{y} \right) \right\| ^2 \right]}\leqslant \frac{{C^{2-2\beta}}}{\left( 1-\beta \right) K^{\beta}}.
\end{aligned}
\end{equation}
\end{Lem}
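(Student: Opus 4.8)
The plan is to reduce both inequalities to a single scalar recursion and then invoke the summation estimate of Lemma \ref{Lem_senq_sum}. First I would observe that, by definition of the Frobenius norm, $\left\| \nabla_x F(\mathbf{x}_k,\mathbf{y}_k;\xi_k^x) \right\|^2 = \sum_{i=1}^n \|g_{i,k}^x\|^2$, so that the summand is $\bar{v}_{k+1}^{-\alpha}\,\tfrac1n\sum_{i=1}^n\|g_{i,k}^x\|^2$. Writing $a_k := \tfrac1n\sum_{i=1}^n\|g_{i,k}^x\|^2$, the target becomes a bound on $\tfrac1K\sum_{k=0}^{K-1}\mathbb{E}[\bar{v}_{k+1}^{-\alpha}a_k]$, which I will treat pathwise and only take expectations at the very end.

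Next I would use the tracking identity recorded for the algorithm together with the double stochasticity of $W$ (Assumption \ref{Ass_graph}, $\mathbf{1}^TW=\mathbf{1}^T$) to conclude that $\bar{m}_{k+1}^x := \tfrac{\mathbf{1}^T}{n}\mathbf{m}_{k+1}^x = c + \sum_{t=0}^k a_t$, i.e. the average of the local tracking variables equals the global accumulated gradient norm. Since $v_{i,k+1}=\max\{m_{i,k+1}^x,m_{i,k+1}^y\}\geq m_{i,k+1}^x$ holds entrywise, averaging over $i$ yields $\bar{v}_{k+1}\geq \bar{m}_{k+1}^x\geq \sum_{t=0}^k a_t$, and because $-\alpha<0$ this gives the pathwise inequality $\bar{v}_{k+1}^{-\alpha}a_k \leq a_k/\left(\sum_{t=0}^k a_t\right)^{\alpha}$.

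I would then apply the right-hand inequality of Lemma \ref{Lem_senq_sum} with $x_t=a_t$ to obtain $\sum_{k=0}^{K-1} a_k/(\sum_{t=0}^k a_t)^\alpha \leq \tfrac1{1-\alpha}(\sum_{t=0}^{K-1}a_t)^{1-\alpha}$, and finally bound $\sum_{t=0}^{K-1}a_t \leq KC^2$ using the uniform gradient bound $\|\nabla_z F_i\|\leq C$ of Assumption \ref{Ass_bound_gra}. Dividing by $K$ produces exactly $C^{2-2\alpha}/((1-\alpha)K^\alpha)$. The dual inequality follows identically, except that $u_{i,k}=m_{i,k}^y$ is the tracking variable itself rather than a maximum, so $\bar{u}_{k+1}=\bar{m}_{k+1}^y$ holds with equality (no relaxation needed) and one applies Lemma \ref{Lem_senq_sum} with exponent $\beta$ and the dual gradients.

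The main care lies in the two reduction steps rather than the closing estimate. I must justify that the maximum defining $v_{i,k+1}$ survives averaging, giving $\bar v_{k+1}\ge \bar m_{k+1}^x$, and I must handle the degenerate case $a_0=0$, where $a_k/(\sum_{t\leq k}a_t)^\alpha$ is formally $0/0$ and Lemma \ref{Lem_senq_sum} requires a positive first term. The latter is resolved by the strictly positive initialization $c=m_{i,0}^z>0$, which forces $\bar v_{k+1}\ge c>0$; hence the original summand $\bar v_{k+1}^{-\alpha}a_k$ is always well defined and vanishes whenever $a_k=0$, so those zero terms may simply be discarded before invoking the lemma.
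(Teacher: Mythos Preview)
Your proposal is correct and follows essentially the same route as the paper: both reduce to the pathwise bound $\bar v_{k+1}\ge \sum_{t\le k}a_t$ (via the max and the averaging/tracking identity), invoke Lemma~\ref{Lem_senq_sum} with exponent $\alpha$, and finish with the uniform bound $a_t\le C^2$ from Assumption~\ref{Ass_bound_gra}. Your treatment is slightly more explicit about the tracking step $\bar m_{k+1}^x=c+\sum_{t\le k}a_t$ and the $a_0=0$ edge case via the positive initialization $c>0$, but there is no substantive difference in approach.
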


\begin{proof}
With the help of Lemma \ref{Lem_senq_sum} and Assumption \ref{Ass_bound_gra}, taking the primal variable $x$ as an example, and noticing that $v_{i,0}>0, i\in [n]$, we have
\begin{equation*}
\begin{aligned}
&\frac{1}{K}\sum_{k=0}^{K-1}{\mathbb{E} \left[ \bar{v}_{k+1}^{-\alpha}\left\| \nabla _xF\left( \mathbf{x}_k,\mathbf{y}_k;\xi _{k}^{x} \right) \right\| ^2 \right]}
\\
&=\frac{1}{K}\sum_{k=0}^{K-1}{\frac{1}{n}\sum_{i=1}^n{\frac{\left\| \nabla _x F_i\left( x_{i,k},y_{i,k};\xi _{i,k}^{x} \right) \right\| ^2}{\bar{v}_{k+1}^{\alpha}}}}
\\
&\leqslant \frac{1}{K}\sum_{k=0}^{K-1}{\frac{1}{n}\sum_{i=1}^n{\frac{\left\| \nabla _x F_i\left( x_{i,k},y_{i,k};\xi _{i,k}^{x} \right) \right\| ^2}{\left( \sum_{t=0}^k{\frac{1}{n}\sum_{j=1}^n{\left\| \nabla _x F_j\left( x_{j,t},y_{j,t};\xi _{j,t}^x \right) \right\| ^2}} \right) ^{\alpha}}}}
\\
&\leqslant \frac{1}{1-\alpha}\frac{1}{K}\left( \sum_{k=0}^{K-1}{\frac{1}{n}\sum_{i=1}^n{\left\| \nabla _x F_i\left( x_{i,k},y_{i,k};\xi _{i,k}^{x} \right) \right\| ^2}} \right) ^{1-\alpha}\leqslant \frac{C^{2-2\alpha}}{\left( 1-\alpha \right) K^{\alpha}}.
\end{aligned}
\end{equation*}
The similar result can be obtained for dual variable $y$ and we thus complete the proof.
\end{proof}

Next, we bound the the consensus error term $S_2$ in the following lemma.
\begin{Lem}
Suppose Assumption \ref{Ass_SC_y}-\ref{Ass_graph} hold. Then, we have
\begin{equation}
\begin{aligned}
&\frac{1}{K}\sum_{k=0}^K{\mathbb{E} \left[ \varDelta _k \right]}\leqslant \frac{2\mathbb{E} \left[ \varDelta _0 \right]}{\left( 1-\rho _W \right) K}
\\
&+\frac{8n\rho _W\gamma _{x}^{2}\left( 1+\zeta _{v}^{2} \right)}{\left( 1-\rho _W \right) ^2}\left( \frac{C^{2-4\alpha}}{\left( 1-2\alpha \right) K^{2\alpha}}\mathbb{I} _{\alpha <1/2}+\frac{1+\log v_K-\log v_1}{K\bar{v}_{1}^{2\alpha -1}}\mathbb{I} _{\alpha \geqslant 1/2} \right) 
\\
&+\frac{8 n\rho _W\gamma _{y}^{2}\left( 1+\zeta _{u}^{2} \right)}{\left( 1-\rho _W \right) ^2}\left( \frac{C^{2-4\beta}}{\left( 1-2\beta \right) K^{2\beta}}\mathbb{I} _{\beta <1/2}+\frac{1+\log u_K-\log u_1}{K\bar{u}_{1}^{2\beta -1}}\mathbb{I} _{\beta \geqslant 1/2} \right) ,
\end{aligned}
\end{equation}
where $\mathbb{I} _{\left[ \cdot \right]}\in \left\{ 0,1 \right\} $ is the indicator for specific condition, and the initial consensus error $\varDelta _0$ can be set to $0$ with proper initialization.
\end{Lem}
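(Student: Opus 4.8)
The plan is to establish a one-step contraction for the consensus error and then unroll it. Writing $\mathbf{I}-\mathbf{J}$ for the projection onto the disagreement subspace and using $W\mathbf{J}=\mathbf{J}W=\mathbf{J}$ together with $(W-\mathbf{J})\mathbf{1}\bar{x}_k=0$, the primal update in \eqref{Alg_DASSC_compact} gives $(\mathbf{I}-\mathbf{J})\mathbf{x}_{k+1}=(W-\mathbf{J})\big[(\mathbf{x}_k-\mathbf{1}\bar{x}_k)-\gamma_x V_{k+1}^{-\alpha}\nabla_x F(\mathbf{x}_k,\mathbf{y}_k;\xi_k^x)\big]$. Applying $\|(W-\mathbf{J})z\|\le\sqrt{\rho_W}\|z\|$ (Assumption \ref{Ass_graph}) and Young's inequality $\|a+b\|^2\le(1+\eta)\|a\|^2+(1+\eta^{-1})\|b\|^2$ with the tuned choice $\eta=\frac{1-\rho_W}{2\rho_W}$ yields
\[
\|\mathbf{x}_{k+1}-\mathbf{1}\bar{x}_{k+1}\|^2\le\tfrac{1+\rho_W}{2}\|\mathbf{x}_k-\mathbf{1}\bar{x}_k\|^2+\tfrac{(1+\rho_W)\rho_W}{1-\rho_W}\gamma_x^2\|V_{k+1}^{-\alpha}\nabla_x F(\mathbf{x}_k,\mathbf{y}_k;\xi_k^x)\|^2 .
\]
The dual variable carries the extra projection, which I would dispatch by noting $\mathbf{1}\bar{y}_{k+1}$ is the best consensus approximation, so $\|\mathbf{y}_{k+1}-\mathbf{1}\bar{y}_{k+1}\|^2\le\|\mathbf{y}_{k+1}-\mathbf{1}z\|^2$ for any $z$; choosing $z=\mathcal{P}_{\mathcal{Y}}(\bar{w}_{k+1})$ with $\bar{w}_{k+1}$ the average of the pre-projection iterates and invoking the row-wise nonexpansiveness of $\mathcal{P}_{\mathcal{Y}}$ removes the projections and gives the identical contraction with $\gamma_y,U_{k+1}^{-\beta},\rho_W$.

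Next I would unroll. With $r=\frac{1+\rho_W}{2}$ one has $1-r=\frac{1-\rho_W}{2}$, so $\sum_{k\ge0}r^k=\frac{2}{1-\rho_W}$; summing the per-step inequality over $k$ and swapping the order of summation gives $\sum_{k=0}^{K}\|\mathbf{x}_k-\mathbf{1}\bar{x}_k\|^2\le\frac{2}{1-\rho_W}\|\mathbf{x}_0-\mathbf{1}\bar{x}_0\|^2+\frac{4\rho_W\gamma_x^2}{(1-\rho_W)^2}\sum_{k=0}^{K-1}\|V_{k+1}^{-\alpha}\nabla_x F\|^2$, after using $1+\rho_W\le2$. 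Dividing by $K$ and adding the dual counterpart produces the initial term $\frac{2\mathbb{E}[\varDelta_0]}{(1-\rho_W)K}$ and two preconditioned-gradient sums with prefactors $\frac{4\rho_W\gamma_x^2}{(1-\rho_W)^2K}$ and $\frac{4\rho_W\gamma_y^2}{(1-\rho_W)^2K}$.

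It then remains to bound $\sum_{k}\|V_{k+1}^{-\alpha}\nabla_x F\|^2=\sum_k\sum_i v_{i,k+1}^{-2\alpha}\|g_{i,k}^x\|^2$. Using $(v_{i,k+1}^{-\alpha})^2\le 2(\bar{v}_{k+1}^{-\alpha})^2+2(v_{i,k+1}^{-\alpha}-\bar{v}_{k+1}^{-\alpha})^2\le 2(1+\zeta_v^2)\bar{v}_{k+1}^{-2\alpha}$ by the definition of $\zeta_v^2$ in \eqref{Def_hete_stepsize} pulls out the factor $2(1+\zeta_v^2)$ and the averaged preconditioner. The exactness of the stepsize-tracking identity $\frac{\mathbf{1}^T}{n}\mathbf{m}_{k+1}^x=c+\frac1n\sum_i\sum_{t=0}^{k}\|g_{i,t}^x\|^2$ together with $v_{i,k+1}\ge m_{i,k+1}^x$ yields $\bar{v}_{k+1}\ge\frac1n\sum_i\sum_{t\le k}\|g_{i,t}^x\|^2$, so with $a_t:=\frac1n\sum_i\|g_{i,t}^x\|^2$ we get $\sum_k\bar{v}_{k+1}^{-2\alpha}\,na_k\le n\sum_k a_k/(\sum_{t\le k}a_t)^{2\alpha}$, to which Lemma \ref{Lem_senq_sum} with exponent $2\alpha$ applies. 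For $\alpha<1/2$ this gives $\frac{n}{1-2\alpha}(\sum_k a_k)^{1-2\alpha}\le\frac{nC^{2-4\alpha}K^{1-2\alpha}}{1-2\alpha}$ via Assumption \ref{Ass_bound_gra}, producing the $\mathbb{I}_{\alpha<1/2}$ term; for $\alpha\ge1/2$ I would split $\bar{v}_{k+1}^{-2\alpha}=\bar{v}_{k+1}^{-(2\alpha-1)}\bar{v}_{k+1}^{-1}\le\bar{v}_1^{-(2\alpha-1)}\bar{v}_{k+1}^{-1}$ using the monotonicity $\bar{v}_{k+1}\ge\bar{v}_1$ and apply the logarithmic case of Lemma \ref{Lem_senq_sum}, giving the $\frac{1+\log v_K-\log v_1}{\bar{v}_1^{2\alpha-1}}$ factor. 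Collecting $\frac{4\rho_W\gamma_x^2}{(1-\rho_W)^2K}\cdot2(1+\zeta_v^2)n=\frac{8n\rho_W\gamma_x^2(1+\zeta_v^2)}{(1-\rho_W)^2}$ reproduces the stated primal coefficients exactly, and the identical argument for $y$ with $\beta,U_{k+1}^{-\beta},\zeta_u^2$ (where $\bar{u}_{k+1}=c+\frac1n\sum_i\sum_{t\le k}\|g_{i,t}^y\|^2$) gives the dual terms.

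The step I expect to be the main obstacle is precisely linking the max-type averaged preconditioner $\bar{v}_{k+1}$ to the \emph{globally} accumulated gradient norm so that Lemma \ref{Lem_senq_sum} becomes applicable: this is where the exactness of the stepsize-tracking protocol is essential, and it is exactly the ingredient missing in D-TiAda. Secondary technical points are verifying the monotonicity $\bar{v}_{k+1}\ge\bar{v}_1$ (and $\bar{u}_{k+1}\ge\bar{u}_1$) needed in the $\alpha,\beta\ge1/2$ regimes, which follows since each coordinate is a maximum of nondecreasing accumulations and the doubly-stochastic $W$ preserves the average, and ensuring the projection onto $\mathcal{Y}$ does not inflate the dual consensus error, which is handled by the best-approximation plus nonexpansiveness argument in the first paragraph.
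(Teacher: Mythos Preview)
Your proposal is correct and follows the same skeleton as the paper: a one-step contraction via Young's inequality with $\eta=\frac{1-\rho_W}{2\rho_W}$, unrolling the geometric recursion with ratio $\frac{1+\rho_W}{2}$, pulling out the factor $(1+\zeta_v^2)$ via \eqref{Def_hete_stepsize}, and then invoking Lemma~\ref{Lem_senq_sum} with exponent $2\alpha$ (splitting $\bar v_{k+1}^{-2\alpha}=\bar v_{k+1}^{-(2\alpha-1)}\bar v_{k+1}^{-1}$ in the $\alpha\ge\tfrac12$ regime). Your identification of the stepsize-tracking identity $\frac{\mathbf 1^T}{n}\mathbf m_{k+1}^x=c+\frac1n\sum_i\sum_{t\le k}\|g_{i,t}^x\|^2$ as the bridge to Lemma~\ref{Lem_senq_sum} is exactly what the paper uses implicitly.

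The one place you diverge is the treatment of the projection in the dual consensus. The paper rewrites $\mathbf y_{k+1}=W\mathbf y_k+\gamma_y\nabla_y\hat G$ with $\nabla_y\hat G:=\gamma_y^{-1}\bigl(\mathcal P_{\mathcal Y}(W(\mathbf y_k+\gamma_yU_{k+1}^{-\beta}\nabla_yF))-W\mathbf y_k\bigr)$, applies Young's inequality to $(I-\mathbf J)(W\mathbf y_k+\gamma_y\nabla_y\hat G)$, and then bounds $\|\gamma_y\nabla_y\hat G\|$ via $W\mathbf y_k=\mathcal P_{\mathcal Y}(W\mathbf y_k)$ and nonexpansiveness; because the $(I-\mathbf J)$ on $\nabla_y\hat G$ is absorbed by the crude estimate $\|(I-\mathbf J)z\|\le\|z\|$, the paper's derivation actually loses the $\rho_W$ factor in the dual drift term. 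Your best-approximation argument with $z=\mathcal P_{\mathcal Y}(\bar w_{k+1})$ and row-wise nonexpansiveness keeps the full $(W-\mathbf J)$ structure intact and therefore reproduces the $\rho_W$ in the dual constant exactly as the lemma statement claims. So your route is slightly cleaner and in fact matches the stated bound more tightly than the paper's own proof.
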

\begin{proof}
By the updating rule of the primal variable, we have
\begin{equation}
\begin{aligned}
&\mathbb{E} \left[ \left\| \mathbf{x}_{k+1}-\mathbf{1}\bar{x}_{k+1} \right\| ^2 \right] 
\\
&=\mathbb{E} \left[ \left\| W\left( \mathbf{x}_k-\gamma _xV_{k+1}^{-\alpha}\nabla _xF\left( \mathbf{x}_k,\mathbf{y}_k;\xi _{k}^{x} \right) \right) -\mathbf{J}\left( \mathbf{x}_k-\gamma _xV_{k+1}^{-\alpha}\nabla _xF\left( \mathbf{x}_k,\mathbf{y}_k;\xi _{k}^{x} \right) \right) \right\| ^2 \right] 
\\
&\leqslant \frac{1+\rho _W}{2}\mathbb{E} \left[ \left\| \mathbf{x}_k-\mathbf{1}\bar{x}_k \right\| ^2 \right] +\frac{2\gamma _{x}^{2}\left( 1+\rho _W \right) \rho _W}{1-\rho _W}\mathbb{E} \left[ \bar{v}_{k+1}^{-2\alpha}\left\| \nabla _xF\left( \mathbf{x}_k,\mathbf{y}_k;\xi _{k}^{x} \right) \right\| ^2 \right] 
\\
&+\frac{2\gamma _{x}^{2}\left( 1+\rho _W \right) \rho _W}{1-\rho _W}\mathbb{E} \left[ \left\| \left( V_{k+1}^{-\alpha}-\bar{v}_{k+1}^{-\alpha}\mathbf{I} \right) \nabla _xF\left( \mathbf{x}_k,\mathbf{y}_k;\xi _{k}^{x} \right) \right\| ^2 \right],
\end{aligned}
\end{equation}
where we have used Young's inequality.
Then, by the definition of $\zeta_v$ in (\ref{Def_hete_stepsize}), we have
\begin{equation}
\begin{aligned}
\mathbb{E} \left[ \left\| \left( V_{k+1}^{-\alpha}-\bar{v}_{k+1}^{-\alpha}\mathbf{I} \right) \nabla _xF\left( \mathbf{x}_k,\mathbf{y}_k;\xi _{k}^{x} \right) \right\| ^2 \right] \leqslant \zeta _{v}^{2}\mathbb{E} \left[ \bar{v}_{k+1}^{-2\alpha}\left\| \nabla _xF\left( \mathbf{x}_k,\mathbf{y}_k;\xi _{k}^{x} \right) \right\| ^2 \right] ,
\end{aligned}
\end{equation}
and thus
\begin{equation}
\begin{aligned}
&\sum_{k=0}^{K-1}{\mathbb{E} \left[ \left\| \mathbf{x}_{k+1}-\mathbf{1}\bar{x}_{k+1} \right\| ^2 \right]}
\\
&\leqslant \frac{2}{1-\rho _W}\mathbb{E} \left[ \left\| \mathbf{x}_k-\mathbf{1}\bar{x}_k \right\| ^2 \right] +\frac{8\gamma _{x}^{2}\rho _W\left( 1+\zeta _{v}^{2} \right)}{\left( 1-\rho _W \right) ^2}\sum_{k=0}^{K-1}{\mathbb{E} \left[ \bar{v}_{k+1}^{-2\alpha}\left\| \nabla _xF\left( \mathbf{x}_k,\mathbf{y}_k;\xi _{k}^{x} \right) \right\| ^2 \right]}.
\end{aligned}
\end{equation}
Then, we bound the last term on the RHS of the above inequality by Lemma \ref{Lem_decre_grad_sum}. For the case $\alpha<1/2$, by Assumption \ref{Ass_bound_gra} we have
\begin{equation}
\begin{aligned}
&\sum_{k=0}^{K-1}{\mathbb{E} \left[ \bar{v}_{k+1}^{-2\alpha}\left\| \nabla _xF\left( \mathbf{x}_k,\mathbf{y}_k;\xi _{k}^{x} \right) \right\| ^2 \right]}
\\
&=\sum_{k=0}^{K-1}{\sum_{i=1}^n{\mathbb{E} \left[ \frac{\left\| \nabla _x F_i\left( x_{i,k},y_{i,k};\xi _{i,k}^{x} \right) \right\| ^2}{\bar{v}_{k+1}^{2\alpha}} \right]}}\leqslant \frac{n\left( KC^2 \right) ^{1-2\alpha}}{\left( 1-2\alpha \right)}.
\end{aligned}
\end{equation}
For the case $\alpha \geqslant 1/2$, with the help of Lemma \ref{Lem_senq_sum}, we have
\begin{equation}
\begin{aligned}
&\sum_{k=0}^{K-1}{\mathbb{E} \left[ \bar{v}_{k+1}^{-2\alpha}\left\| \nabla _xF\left( \mathbf{x}_k,\mathbf{y}_k;\xi _{k}^{x} \right) \right\| ^2 \right]}
\\
&=\sum_{k=0}^{K-1}{\sum_{i=1}^n{\mathbb{E} \left[ \frac{\left\| \nabla _x F_i\left( x_{i,k},y_{i,k};\xi _{i,k}^{x} \right) \right\| ^2}{\bar{v}_{k+1}\cdot \bar{v}_{k+1}^{2\alpha -1}} \right]}}\leqslant \frac{n\left( 1+\log v_T-\log v_1 \right)}{\bar{v}_{1}^{2\alpha -1}}.
\end{aligned}
\end{equation}
For the dual variable, we have
\begin{equation*}
\begin{aligned}
\mathbf{y}_{k+1}&=\mathcal{P} _{\mathcal{Y}}\left( W\left( \mathbf{y}_k+\gamma _yU_{k+1}^{-\beta}\nabla _yF\left( \mathbf{x}_k,\mathbf{y}_k;\xi _{k}^{y} \right) \right) \right) 
\\
&=W\mathbf{y}_k+\gamma _y\nabla _y\hat{G}
\end{aligned}
\end{equation*}
where
\begin{equation*}
\begin{aligned}
\nabla _y\hat{G}=\frac{1}{\gamma _y}\left( \mathcal{P} _{\mathcal{Y}}\left( W\left( \mathbf{y}_k+\gamma _yU_{k+1}^{-\beta}\nabla _yF\left( \mathbf{x}_k,\mathbf{y}_k;\xi _{k}^{y} \right) \right) \right) -W\mathbf{y}_k \right).
\end{aligned}
\end{equation*}

Then, using Young's inequality with parameter $\lambda$, we have
\begin{equation*}
\begin{aligned}
&\mathbb{E} \left[ \left\| \mathbf{y}_{k+1}-\mathbf{1}\bar{y}_{k+1} \right\| ^2 \right] 
\\
&=\mathbb{E} \left[ \left\| W\mathbf{y}_k+\gamma _y\nabla _y\hat{G}-\mathbf{J}\left( W\mathbf{y}_k+\gamma _y\nabla _y\hat{G} \right) \right\| ^2 \right] 
\\
&\leqslant \left( 1+\lambda \right) \rho _W\mathbb{E} \left[ \left\| \mathbf{y}_k-\mathbf{Jy}_k \right\| ^2 \right] 
\\
&+\left( 1+\frac{1}{\lambda} \right) \mathbb{E} \left[ \left\| \mathcal{P} _{\mathcal{Y}}\left( W\left( \mathbf{y}_k+\gamma _yU_{k+1}^{-\beta}\nabla _yF\left( \mathbf{x}_k,\mathbf{y}_k;\xi _{k}^{y} \right) \right) \right) -W\mathbf{y}_k \right\| ^2 \right] 
\\
&\leqslant \frac{1+\rho _W}{2}\mathbb{E} \left[ \left\| \mathbf{y}_k-\mathbf{Jy}_k \right\| ^2 \right] 
\\
&+\frac{1+\rho _W}{1-\rho _W}\mathbb{E} \left[ \left\| \mathcal{P} _{\mathcal{Y}}\left( W\left( \mathbf{y}_k+\gamma _yU_{k+1}^{-\beta}\nabla _yF\left( \mathbf{x}_k,\mathbf{y}_k;\xi _{k}^{y} \right) \right) \right) -W\mathbf{y}_k \right\| ^2 \right].
\end{aligned}
\end{equation*}
Noticing that $W\mathbf{y}_k=\mathcal{P} _{\mathcal{Y}}\left( W\mathbf{y}_k \right)$ holds for convex set $\mathcal{Y}$, we get
\begin{equation*}
\begin{aligned}
&\mathbb{E} \left[ \left\| \mathbf{y}_{k+1}-\mathbf{1}\bar{y}_{k+1} \right\| ^2 \right] 
\\
&\leqslant \frac{1+\rho _W}{2}\mathbb{E} \left[ \left\| \mathbf{y}_k-\mathbf{Jy}_k \right\| ^2 \right] 
\\
&+\frac{1+\rho _W}{1-\rho _W}\mathbb{E} \left[ \left( \left\| \mathcal{P} _{\mathcal{Y}}\left( W\left( \mathbf{y}_k+\gamma _yU_{k+1}^{-\beta}\nabla _yF\left( \mathbf{x}_k,\mathbf{y}_k;\xi _{k}^{y} \right) \right) \right) -\mathcal{P} _{\mathcal{Y}}\left( W\mathbf{y}_k \right) \right\| \right) ^2 \right] 
\\
&\leqslant \frac{1+\rho _W}{2}\mathbb{E} \left[ \left\| \mathbf{y}_k-\mathbf{Jy}_k \right\| ^2 \right] +\frac{1+\rho _W}{1-\rho _W}\mathbb{E} \left[ \left\| \gamma _yU_{k+1}^{-\beta}\nabla _yF\left( \mathbf{x}_k,\mathbf{y}_k;\xi _{k}^{y} \right) \right\| ^2 \right] 
\\
&\leqslant \frac{1+\rho _W}{2}\mathbb{E} \left[ \left\| \mathbf{y}_k-\mathbf{Jy}_k \right\| ^2 \right] +\frac{4\gamma _{y}^{2}\left( 1+\zeta _{u}^{2} \right)}{\left( 1-\rho _W \right)}\mathbb{E} \left[ \bar{u}_{k+1}^{-2\beta}\left\| \nabla _yF\left( \mathbf{x}_k,\mathbf{y}_k;\xi _{k}^{y} \right) \right\| ^2 \right], 
\end{aligned}
\end{equation*}
where we have used the non-expansiveness of the projection operator. Then, we have
\begin{equation*}
\begin{aligned}
&\sum_{k=0}^{K-1}{\mathbb{E} \left[ \left\| \mathbf{y}_k-\mathbf{1}\bar{y}_k \right\| ^2 \right]}
\\
&\leqslant \frac{2}{1-\rho _W}\mathbb{E} \left[ \left\| \mathbf{y}_0-\mathbf{Jy}_0 \right\| ^2 \right] +\frac{8\gamma _{y}^{2}\left( 1+\zeta _{u}^{2} \right)}{\left( 1-\rho _W \right) ^2}\sum_{k=0}^{K-1}{\mathbb{E} \left[ \bar{u}_{k+1}^{-2\beta}\left\| \nabla _yF\left( \mathbf{x}_k,\mathbf{y}_k;\xi _{k}^{y} \right) \right\| ^2 \right]}.
\end{aligned}
\end{equation*}
Similar to the primal variable, we can bound the last term above, which completes the proof.
\end{proof}

Next, we need to bound the term $S_3$ i.e., the optimality gap in a dual variable. The intuition of the proof relies on the adaptive two time-scale protocol, that is, for given $\alpha$ and $\beta$, we try to find the threshold of the iterations $k_0$, after which the inner sub-problem can be well solved (faster) to ensure that the computation of outer sub-problem can be solved accurately (slower). In specific, we suppose that there is a constant $G$ such that $\bar{u}_k\leqslant G$ hold for $k=0,1,\cdots ,k_0-1$, then the analysis is divided into two phases.

\begin{Lem}[First phase]\label{Lem_y_opt_gap_phase_I}
Suppose Assumption \ref{Ass_SC_y}-\ref{Ass_graph} hold. If $\bar{u}_k\leqslant G, k=0,1,\cdots ,k_0-1$, then we have
\begin{equation}
\begin{aligned}
&\sum_{k=0}^{k_0-1}{\mathbb{E} \left[ f\left( \bar{x}_k,y^*\left( \bar{x}_k \right) \right) -f\left( \bar{x}_k,\bar{y}_k \right) \right]}
\\
&\leqslant \sum_{k=0}^{k_0-1}{\mathbb{E} \left[ E_{1,k} \right]}+\frac{\gamma _{x}^{2}\kappa ^2\left( 1+\zeta _{v}^{2} \right) G^{2\beta}}{n\mu \gamma _{y}^{2}}\sum_{k=0}^{k_0-1}{\mathbb{E} \left[ \bar{v}_{k+1}^{-2\alpha}\left\| \nabla _xF\left( \mathbf{x}_k,\mathbf{y}_k;\xi _{k}^{x} \right) \right\| ^2 \right]}
\\
&+\frac{\gamma _y\left( 1+\zeta _{u}^{2} \right)}{n}\sum_{k=0}^{k_0-1}{\mathbb{E} \left[ \bar{u}_{k+1}^{-\beta}\left\| \nabla _yF\left( \mathbf{x}_k,\mathbf{y}_k;\xi _k \right) \right\| ^2 \right]}+\frac{4\kappa L}{n}\sum_{k=0}^{k_0-1}{\mathbb{E} \left[ \left\| \mathbf{x}_k-\mathbf{1}\bar{x}_k \right\| ^2 \right]}
\\
&+\frac{4}{\mu}\sum_{k=0}^{k_0-1}{\mathbb{E} \left[ \left\| \frac{\boldsymbol{\tilde{u}}_{k+1}^{-\beta}}{n\bar{u}_{k+1}^{-\beta}}\nabla _yF\left( \mathbf{x}_k,\mathbf{y}_k;\xi _{k}^{y} \right) \right\| ^2 \right]}
+C\sum_{k=0}^{k_0-1}{\mathbb{E} \left[ \sqrt{\frac{1}{n}\left\| \mathbf{y}_k-\mathbf{1}\bar{y}_k \right\| ^2} \right]},
\end{aligned}
\end{equation}
where
\begin{equation}\label{Def_E_1_k}
\begin{aligned}
E_{1,k}:=\frac{1-3\mu \gamma _y\bar{u}_{k+1}^{-\beta}/4}{2\gamma _y\bar{u}_{k+1}^{-\beta}n}\left\| \mathbf{y}_k-\mathbf{1}y^*\left( \bar{x}_k \right) \right\| ^2-\frac{\left\| \mathbf{y}_{k+1}-\mathbf{1}y^*\left( \bar{x}_{k+1} \right) \right\| ^2}{\left( 2+\mu \gamma _y\bar{u}_{k+1}^{-\beta} \right) \gamma _y\bar{u}_{k+1}^{-\beta}n}.
\end{aligned}
\end{equation}
\end{Lem}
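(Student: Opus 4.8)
The goal is to control the one-step evolution of the dual distance to the \emph{moving} optimum $\|\mathbf{y}_{k+1}-\mathbf{1}y^*(\bar{x}_{k+1})\|^2$ and to rearrange the resulting recursion so that the per-iteration optimality gap $f(\bar{x}_k,y^*(\bar{x}_k))-f(\bar{x}_k,\bar{y}_k)$ is dominated by the telescoping quantity $E_{1,k}$ of \eqref{Def_E_1_k} plus the error terms in the statement; summing over $k=0,\dots,k_0-1$ then collapses the distance terms. I would start from the dual update $\mathbf{y}_{k+1}=\mathcal{P}_{\mathcal{Y}}(W(\mathbf{y}_k+\gamma_y U_{k+1}^{-\beta}\nabla_y F(\mathbf{x}_k,\mathbf{y}_k;\xi_k^y)))$. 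Since $\mathbf{1}y^*(\bar{x}_k)$ is fixed by both $W$ (doubly stochastic, Assumption \ref{Ass_graph}) and the componentwise projection $\mathcal{P}_{\mathcal{Y}}$, the non-expansiveness of the projection together with $\|Wz\|\le\|z\|$ gives $\|\mathbf{y}_{k+1}-\mathbf{1}y^*(\bar{x}_k)\|^2\le\|\mathbf{y}_k-\mathbf{1}y^*(\bar{x}_k)+\gamma_y U_{k+1}^{-\beta}\nabla_y F\|^2$, which I expand into a distance term, a cross term, and a second-order $\gamma_y^2$ term.

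For the cross term I would split $U_{k+1}^{-\beta}=\bar{u}_{k+1}^{-\beta}\mathbf{I}+(U_{k+1}^{-\beta}-\bar{u}_{k+1}^{-\beta}\mathbf{I})$. After taking expectations and using unbiasedness of the stochastic gradients (Assumption \ref{Ass_bound_gra}), the scalar part is treated node-wise by $\mu$-strong concavity (Assumption \ref{Ass_SC_y}): for each $i$, $\langle\nabla_y f_i(x_{i,k},y_{i,k}),y_{i,k}-y^*(\bar{x}_k)\rangle\le f_i(x_{i,k},y_{i,k})-f_i(x_{i,k},y^*(\bar{x}_k))-\tfrac{\mu}{2}\|y_{i,k}-y^*(\bar{x}_k)\|^2$. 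This yields both the contraction $-\mu\gamma_y\bar{u}_{k+1}^{-\beta}\|\mathbf{y}_k-\mathbf{1}y^*(\bar{x}_k)\|^2$ and the node-wise function gaps, which I then convert to the averaged global gap: the smoothness of the $f_i$ (Assumption \ref{Ass_joint_smo}) transfers the argument $x_{i,k}\to\bar{x}_k$ and contributes the $x$-consensus term $\tfrac{4\kappa L}{n}\|\mathbf{x}_k-\mathbf{1}\bar{x}_k\|^2$, while $|f_i(\bar{x}_k,y_{i,k})-f_i(\bar{x}_k,\bar{y}_k)|\le C\|y_{i,k}-\bar{y}_k\|$ from the bounded-gradient assumption, averaged and passed through Cauchy--Schwarz, produces the square-root consensus term $C\sqrt{\tfrac1n\|\mathbf{y}_k-\mathbf{1}\bar{y}_k\|^2}$. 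The deviation part of $U_{k+1}^{-\beta}$ is absorbed by a Young's inequality against the $\tfrac{\mu}{2}$-contraction, leaving the $\tfrac{4}{\mu}\|\boldsymbol{\tilde{u}}_{k+1}^{-\beta}/(n\bar{u}_{k+1}^{-\beta})\nabla_y F\|^2$ term, and the second-order term is bounded by $\gamma_y^2(1+\zeta_u^2)\bar{u}_{k+1}^{-2\beta}\|\nabla_y F\|^2$ using the definition \eqref{Def_hete_stepsize} of $\zeta_u$.

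The remaining step, which I expect to be the main obstacle, is to replace the fixed target $y^*(\bar{x}_k)$ by $y^*(\bar{x}_{k+1})$ so that the distance terms telescope. I would bound $\|\mathbf{y}_{k+1}-\mathbf{1}y^*(\bar{x}_{k+1})\|^2$ by $(1+a)\|\mathbf{y}_{k+1}-\mathbf{1}y^*(\bar{x}_k)\|^2+(1+1/a)\,n\kappa^2\|\bar{x}_{k+1}-\bar{x}_k\|^2$ via Young's inequality, using the $\kappa$-Lipschitzness of $y^*$ (Lemma \ref{Lem_envelope}) and the primal step $\|\bar{x}_{k+1}-\bar{x}_k\|^2\lesssim\gamma_x^2\bar{v}_{k+1}^{-2\alpha}\|\nabla_x F\|^2$ read off from \eqref{Eq_show_inconsistancy}. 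Choosing $a\asymp\mu\gamma_y\bar{u}_{k+1}^{-\beta}$ degrades the clean contraction $1-\mu\gamma_y\bar{u}_{k+1}^{-\beta}$ into the factor $1-\tfrac34\mu\gamma_y\bar{u}_{k+1}^{-\beta}$ appearing in the numerator of $E_{1,k}$, while $1/a\asymp\bar{u}_{k+1}^{\beta}/(\mu\gamma_y)$ multiplies the primal drift by one power of $\bar{u}_{k+1}^{\beta}$; the final division of the recursion by $2\gamma_y\bar{u}_{k+1}^{-\beta}n$ (needed to expose the gap and form $E_{1,k}$) supplies a second power, so the drift contributes $\tfrac{\gamma_x^2\kappa^2(1+\zeta_v^2)\bar{u}_{k+1}^{2\beta}}{n\mu\gamma_y^2}\bar{v}_{k+1}^{-2\alpha}\|\nabla_x F\|^2$. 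The delicate point is precisely that this coupling is uniformly controllable only in the first phase, where the hypothesis $\bar{u}_k\le G$ lets me bound $\bar{u}_{k+1}^{2\beta}\le G^{2\beta}$ for $k\le k_0-1$; this is where the two-time-scale separation enters, and tracking the two powers of $\bar{u}$ correctly is the crux. Summing the rearranged inequality over $k=0,\dots,k_0-1$ then telescopes the distance terms into $\sum_k E_{1,k}$ and reproduces the stated bound.
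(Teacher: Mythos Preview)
Your proposal is correct and follows essentially the same route as the paper's proof: non-expansiveness of the projection plus $\|W\|_2\le 1$ to land on $\|\mathbf{y}_k-\mathbf{1}y^*(\bar{x}_k)+\gamma_yU_{k+1}^{-\beta}\nabla_yF\|^2$, the split $U_{k+1}^{-\beta}=\bar{u}_{k+1}^{-\beta}\mathbf{I}+(U_{k+1}^{-\beta}-\bar{u}_{k+1}^{-\beta}\mathbf{I})$ with Young's inequality on the deviation part, node-wise strong concavity together with smoothness/bounded-gradient to extract the global gap and the two consensus terms, and a Young's inequality with parameter $\lambda_k=\tfrac{\mu\gamma_y\bar{u}_{k+1}^{-\beta}}{2}$ to swap $y^*(\bar{x}_k)\to y^*(\bar{x}_{k+1})$, the $\kappa$-Lipschitzness of $y^*$ producing the primal drift. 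The only cosmetic difference is the order of operations: the paper applies the target-switching Young's inequality at the outset and the paper splits the $x$-argument $x_{i,k}\to\bar{x}_k$ \emph{before} invoking strong concavity, whereas you do both afterwards; either ordering works and yields the same constants, including your correctly tracked two powers of $\bar{u}_{k+1}^{\beta}$ that the first-phase hypothesis caps by $G^{2\beta}$.
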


\begin{proof}
Using Young's inequality with parameter $\lambda_k$, we get
\begin{equation}\label{Eq_lemma_8_1}
\begin{aligned}
&\frac{1}{n}\left\| \mathbf{y}_{k+1}-\mathbf{1}\bar{y}^*\left( \bar{x}_{k+1} \right) \right\| ^2
\\
&\leqslant \frac{\left( 1+\lambda _k \right)}{n}\left\| \mathbf{y}_{k+1}-\mathbf{1}y^*\left( \bar{x}_k \right) \right\| ^2+\left( 1+\frac{1}{\lambda _k} \right) \left\| y^*\left( \bar{x}_k \right) -y^*\left( \bar{x}_{k+1} \right) \right\| ^2.
\end{aligned}
\end{equation}
Recalling that $
\mathbf{y}_{k+1}=\mathcal{P} _{\mathcal{Y}}\left( W\left( \mathbf{y}_k+\gamma _yU_{k+1}^{-\beta}\nabla _yF\left( \mathbf{x}_k,\mathbf{y}_k;\xi _{k}^{y} \right) \right) \right) 
$, we further define
\[
\mathbf{\hat{y}}_{k+1}=W\left( \mathbf{y}_k+\gamma _yU_{k+1}^{-\beta}\nabla _yF\left( \mathbf{x}_k,\mathbf{y}_k;\xi _{k}^{y} \right) \right).
\]
Then, for the first term on the RHS of \eqref{Eq_lemma_8_1}, by the non-expansiveness property of projection operator $\mathcal{P}_{\mathcal{Y}}(\cdot)$ (c.f., Lemma 1 in \citep{nedic2010constrained}), we have
\begin{equation}
\begin{aligned}
&\frac{1}{n}\left\| \mathbf{y}_{k+1}-\mathbf{1}y^*\left( \bar{x}_k \right) \right\| ^2
\\
&\leqslant \frac{1}{n}\left\| \mathbf{\hat{y}}_{k+1}-\mathbf{1}y^*\left( \bar{x}_k \right) \right\| ^2-\frac{1}{n}\left\| \mathbf{y}_{k+1}-\mathbf{\hat{y}}_{k+1} \right\| ^2
\\
&\leqslant \frac{1}{n}\left\| \mathbf{y}_k-\mathbf{1}y^*\left( \bar{x}_k \right) \right\| ^2+\frac{\gamma _{y}^{2}}{n}\left\| U_{k+1}^{-\beta}\nabla _yF\left( \mathbf{x}_k,\mathbf{y}_k;\xi _{k}^{y} \right) \right\| ^2
\\
&-\frac{1}{n}\sum_{i=1}^n{2\left< \gamma _y\bar{u}_{k+1}^{-\beta}g_{i,k}^{y}, y_{i,k}-y^*\left( \bar{x}_k \right) \right>}-\frac{1}{n}\sum_{i=1}^n{2\left< \gamma _y\left( u_{i,k+1}^{-\beta}-\bar{u}_{k+1}^{-\beta} \right) g_{i,k}^{y}, y_{i,k}-y^*\left( \bar{x}_k \right) \right>},
\end{aligned}
\end{equation}
wherein the last inequality we have used the fact $\left\| W \right\| _{2}^{2}\leqslant 1$.

Then, multiplying by $1/\left( \gamma _y\bar{u}_{k+1}^{-\beta} \right) $ on both sides of \eqref{Eq_lemma_8_1} we get
\begin{equation}
\begin{aligned}
&\frac{1}{n\gamma _y\bar{u}_{k+1}^{-\beta}}\left\| \mathbf{y}_{k+1}-\mathbf{1}y^*\left( \bar{x}_k \right) \right\| ^2
\\
&\leqslant \frac{1+\lambda _k}{\lambda _k\gamma _y\bar{u}_{k+1}^{-\beta}}\left\| \bar{y}^*\left( \bar{x}_k \right) -\bar{y}^*\left( \bar{x}_{k+1} \right) \right\| ^2
\\
&+\left( 1+\lambda _k \right) \left( \frac{1}{n\gamma _y\bar{u}_{k+1}^{-\beta}}\left\| \mathbf{y}_k-\mathbf{1}y^*\left( \bar{x}_k \right) \right\| ^2+\frac{\gamma _y}{n\bar{u}_{k+1}^{-\beta}}\left\| U_{k+1}^{-\beta}\nabla _yF\left( \mathbf{x}_k,\mathbf{y}_k;\xi _{k}^{y} \right) \right\| ^2 \right) 
\\
&-\left( 1+\lambda _k \right) \left( \frac{1}{n}\sum_{i=1}^n{2\left< g_{i,k}^{y},y_{i,k}-y^*\left( \bar{x}_k \right) \right>}-\frac{1}{n}\sum_{i=1}^n{2\left< \left( \frac{u_{i,k+1}^{-\beta}-\bar{u}_{k+1}^{-\beta}}{\bar{u}_{k+1}^{-\beta}} \right) g_{i,k}^{y},y_{i,k}-y^*\left( \bar{x}_k \right) \right>} \right) .
\end{aligned}
\end{equation}
For the inner-product terms on the RHS, taking expectation on both sides, we have
\begin{equation}
\begin{aligned}
&\frac{1}{n}\sum_{i=1}^n{\mathbb{E} \left[ -2\left< g_{i,k}^{y}, y_{i,k}-y^*\left( \bar{x}_k \right) \right> \right]}
\\
&=\frac{1}{n}\sum_{i=1}^n{\mathbb{E} \left[ -2\left< \nabla _yf_i\left( \bar{x}_k,y_{i,k} \right) , y_{i,k}-y^*\left( \bar{x}_k \right) \right> \right]}
\\
&+\frac{1}{n}\sum_{i=1}^n{\mathbb{E} \left[ -2\left< \nabla _yf_i\left( x_{i,k},y_{i,k} \right) -\nabla _yf_i\left( \bar{x}_k,y_{i,k} \right) , y_{i,k}-y^*\left( \bar{x}_k \right) \right> \right]}
\\
&\leqslant \frac{1}{n}\sum_{i=1}^n{\mathbb{E} \left[ -2\left( f_i\left( \bar{x}_k,y^*\left( \bar{x}_k \right) \right) -f_i\left( \bar{x}_k,y_{i,k} \right) \right) -\mu \left\| y_{i,k}-y^*\left( \bar{x}_k \right) \right\| ^2 \right]}
\\
&+\frac{1}{n}\sum_{i=1}^n{\mathbb{E} \left[ \frac{8}{\mu}\left\| \nabla _yf_i\left( x_{i,k},y_{i,k} \right) -\nabla _yf_i\left( \bar{x}_k,y_{i,k} \right) \right\| ^2+\frac{\mu}{8}\left\| y_{i,k}-\bar{y}^*\left( \bar{x}_k \right) \right\| ^2 \right]}
\\
&\leqslant \mathbb{E} \left[ -2\left( f\left( \bar{x}_k,y^*\left( \bar{x}_k \right) \right) -f\left( \bar{x}_k,\bar{y}_k \right) \right) \right] +\frac{1}{n}\sum_{i=1}^n{\mathbb{E} \left[ -2\left( f_i\left( \bar{x}_k,\bar{y}_k \right) -f_i\left( \bar{x}_k,y_{i,k} \right) \right) \right]}
\\
&+\frac{8\kappa L}{n}\sum_{i=1}^n{\mathbb{E} \left[ \left\| x_{i,k}-\bar{x}_k \right\| ^2 \right]}-\frac{7\mu}{8n}\sum_{i=1}^n{\mathbb{E} \left[ \left\| y_{i,k}-y^*\left( \bar{x}_k \right) \right\| ^2 \right]},
\end{aligned}
\end{equation}
where we have used Young's inequality and strong-concavity of $f_i$, and
\begin{equation}
\begin{aligned}
&\frac{1}{n}\sum_{i=1}^n{\mathbb{E} \left[ -2\left< \left( \frac{u_{i,k+1}^{-\beta}-\bar{u}_{k+1}^{-\beta}}{\bar{u}_{k+1}^{-\beta}} \right) g_{i,k}^{y},y_{i,k}-y^*\left( \bar{x}_k \right) \right> \right]}
\\
&\leqslant \frac{1}{n}\sum_{i=1}^n{\mathbb{E} \left[ \frac{8}{\mu}\left\| \left( \frac{u_{i,k+1}^{-\beta}-\bar{u}_{k+1}^{-\beta}}{\bar{u}_{k+1}^{-\beta}} \right) g_{i,k}^{y} \right\| ^2+\frac{\mu}{8}\left\| y_{i,k}-y^*\left( \bar{x}_k \right) \right\| ^2 \right]}.
\end{aligned}
\end{equation}
For the consensus error of dual variable on the objective function, using strong-concavity of $f_i$ and Jensen's inequality, we have
\begin{equation}
\begin{aligned}
&\frac{1}{n}\sum_{i=1}^n{-2\left( f_i\left( \bar{x}_k,\bar{y}_k \right) -f_i\left( \bar{x}_k,y_{i,k} \right) \right)}
\\
&\leqslant \frac{1}{n}\sum_{i=1}^n{2\left< \nabla _yf_i\left( \bar{x}_k,\bar{y}_k \right) ,y_{i,k}-\bar{y}_k \right>}-\frac{\mu}{n}\left\| \mathbf{y}_k-\mathbf{1}\bar{y}_k \right\| ^2\,\,
\\
&\leqslant 2C\frac{1}{n}\sum_{i=1}^n{ \left\| y_{i,k}-\bar{y}_k \right\|}\leqslant 2C\sqrt{\frac{1}{n}\left\| \mathbf{y}_k-\mathbf{1}\bar{y}_k \right\| ^2}.
\end{aligned}
\end{equation}
Letting $\lambda _k= \mu \gamma _y\bar{u}_{k+1}^{-\beta}/2$, we get
\begin{equation}
\begin{aligned}
&\mathbb{E} \left[ f\left( \bar{x}_k,\bar{y}^*\left( \bar{x}_k \right) \right) -f\left( \bar{x}_k,\bar{y}_k \right) \right] 
\\
&\leqslant \mathbb{E} \left[ \frac{1-3\mu \gamma _y\bar{u}_{k+1}^{-\beta}/4}{2\gamma _y\bar{u}_{k+1}^{-\beta}n}\left\| \mathbf{y}_k-\mathbf{1}y^*\left( \bar{x}_k \right) \right\| ^2-\frac{\left\| \mathbf{y}_{k+1}-\mathbf{1}y^*\left( \bar{x}_{k+1} \right) \right\| ^2}{\left( 2+\mu \gamma _y\bar{u}_{k+1}^{-\beta} \right) \gamma _y\bar{u}_{k+1}^{-\beta}n} \right] 
\\
&+\frac{\gamma _{x}^{2}\kappa ^2\left( 1+\zeta _{v}^{2} \right) G^{2\beta}}{n\mu \gamma _{y}^{2}}\mathbb{E} \left[ \bar{v}_{k+1}^{-2\alpha}\left\| \nabla _xF\left( \mathbf{x}_k,\mathbf{y}_k;\xi _{k}^{x} \right) \right\| ^2 \right] 
\\
&+\frac{\gamma _y\left( 1+\zeta _{u}^{2} \right)}{n}\sum_{i=1}^n{\mathbb{E} \left[ \bar{u}_{k+1}^{-\beta}\left\| \nabla _yF\left( \mathbf{x}_k,\mathbf{y}_k;\xi _k \right) \right\| ^2 \right]}+\frac{4\kappa L}{n}\mathbb{E} \left[ \left\| \mathbf{x}_k-\mathbf{1}\bar{y}_k \right\| ^2 \right] 
\\
&+\frac{4}{\mu}\mathbb{E} \left[ \left\| \frac{\boldsymbol{\tilde{u}}_{k+1}^{-\beta}}{n\bar{u}_{k+1}^{-\beta}}\nabla _yF\left( \mathbf{x}_k,\mathbf{y}_k;\xi _{k}^{y} \right) \right\| ^2 \right] +C\mathbb{E} \left[ \sqrt{\frac{1}{n}\left\| \mathbf{y}_k-\mathbf{1}\bar{y}_k \right\| ^2} \right].
\end{aligned}
\end{equation}
By the $\kappa$-smoothness of $y^{*}$, we have
\begin{equation}
\begin{aligned}
&\left\| y^*\left( \bar{x}_{k+1} \right) -y^*\left( \bar{x}_k \right) \right\| ^2
\\
&\leqslant \kappa ^2\left\| \bar{x}_{k+1}-\bar{x}_k \right\| ^2
\\
&=\kappa ^2\left\| \gamma _x\bar{v}_{k+1}^{-\alpha}\frac{\mathbf{1}^T}{n}\nabla _xF\left( \mathbf{x}_k,\mathbf{y}_k;\xi _k \right) -\gamma _x\frac{\left( \boldsymbol{\tilde{v}}_{k+1}^{-\alpha} \right) ^T}{n}\nabla _xF\left( \mathbf{x}_k,\mathbf{y}_k;\xi _{k}^{x} \right) \right\| ^2
\\
&\leqslant \frac{2\gamma _{x}^{2}\kappa ^2\left( 1+\zeta _{v}^{2} \right) \bar{v}_{k+1}^{-2\alpha}}{n}\left\| \nabla _xF\left( \mathbf{x}_k,\mathbf{y}_k;\xi _{k}^{x} \right) \right\| ^2.
\end{aligned}
\end{equation}
Telescoping the obtained terms from $0$ to $k_0-1$ and noticing that $\bar{u}_{k}\leqslant G$ for $k\leqslant k_0 -1 $ we complete the proof.
\end{proof}

For the second phase, i.e., $k\geqslant k_0$, we have the following lemma.
\begin{Lem}[Second phase]\label{Lem_y_opt_gap_phase_II}
Suppose Assumption \ref{Ass_SC_y}-\ref{Ass_graph} hold. If $\bar{u}_k\leqslant G, k=0,1,\cdots ,k_0-1$, then we have
\begin{equation}
\begin{aligned}
&\sum_{k=k_0}^{K-1}{\mathbb{E} \left[ f\left( \bar{x}_k,\bar{y}^*\left( \bar{x}_k \right) \right) -f\left( \bar{x}_k,\bar{y}_k \right) \right]}
\\
&\leqslant \sum_{k=k_0}^{K-1}{\mathbb{E} \left[ E_{1,k} \right]}+\frac{8\gamma _{x}^{2}\kappa ^2\left( 1+\zeta _{v}^{2} \right)}{\mu \gamma _{y}^{2}G^{2\alpha -2\beta}}\sum_{k=k_0}^{K-1}{\left\| \nabla _xf\left( \bar{x}_k,\bar{y}_k \right) \right\| ^2}
\\
&+\left( \frac{8\gamma _{x}^{2}\kappa ^2L^2\left( 1+\zeta _{v}^{2} \right)}{n\mu \gamma _{y}^{2}G^{2\alpha -2\beta}}+\frac{4\kappa L}{n} \right) \sum_{k=k_0}^{K-1}{\mathbb{E} \left[ \varDelta _k \right]}
\\
&+\frac{\gamma _y\left( 1+\zeta _{u}^{2} \right)}{n}\mathbb{E} \left[ \bar{u}_{k+1}^{-\beta}\left\| \nabla _yF\left( \mathbf{x}_k,\mathbf{y}_k;\xi _k \right) \right\| ^2 \right] 
+C\sum_{k=k_0}^{K-1}{\mathbb{E} \left[ \sqrt{\frac{1}{n}\left\| \mathbf{y}_k-\mathbf{1}\bar{y}_k \right\| ^2} \right]}
\\
&+\frac{\gamma _{x}^{2}\left( 1+\zeta _{v}^{2} \right)}{\gamma _y\bar{v}_{1}^{\alpha -\beta}}\left( \kappa ^2+\frac{2\gamma _{x}^{2}\left( 1+\zeta _{v}^{2} \right) C^2\hat{L}^2}{\mu \gamma _y\bar{v}_{1}^{2\alpha -\beta}} \right) \sum_{k=k_0}^{K-1}{\mathbb{E} \left[ \frac{\bar{v}_{k+1}^{-\alpha}}{n}\left\| \nabla _xF\left( \mathbf{x}_k,\mathbf{y}_k;\xi _{k}^{x} \right) \right\| ^2 \right]}
\\
&+\frac{4\gamma _x\kappa \left( 1+\zeta _v \right) C^2}{\mu \gamma _y\bar{v}_{1}^{\alpha}}\mathbb{E} \left[ \bar{u}_{K}^{\beta} \right] +\frac{4}{\mu}\sum_{k=k_0}^{K-1}{\mathbb{E} \left[ \left\| \frac{\boldsymbol{\tilde{u}}_{k+1}^{-\beta}}{n\bar{u}_{k+1}^{-\beta}}\nabla _yF\left( \mathbf{x}_k,\mathbf{y}_k;\xi _{k}^{y} \right) \right\| ^2 \right]}.
\end{aligned}
\end{equation}
\end{Lem}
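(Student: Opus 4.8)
The plan is to treat Phase II as the complement of Phase I: both phases start from the \emph{identical} one-step recursion derived in the proof of Lemma~\ref{Lem_y_opt_gap_phase_I}. Concretely, multiplying the projected dual update by $1/(\gamma_y\bar u_{k+1}^{-\beta})$ and applying Young's inequality with $\lambda_k=\mu\gamma_y\bar u_{k+1}^{-\beta}/2$ yields a bound on $\tfrac{1}{n\gamma_y\bar u_{k+1}^{-\beta}}\|\mathbf y_{k+1}-\mathbf 1 y^*(\bar x_{k+1})\|^2$ consisting of: the telescoping quantity $E_{1,k}$; the strong-concavity contraction; the dual-gradient term $\propto\gamma_y(1+\zeta_u^2)\bar u_{k+1}^{-\beta}\|\nabla_y F\|^2$; the primal consensus term $\tfrac{4\kappa L}{n}\|\mathbf x_k-\mathbf 1\bar x_k\|^2$; the dual inconsistency term $\tfrac{4}{\mu}\|\tilde u_{k+1}^{-\beta}\nabla_y F/(n\bar u_{k+1}^{-\beta})\|^2$; the dual-consensus term $C\sqrt{\|\mathbf y_k-\mathbf 1\bar y_k\|^2/n}$; and the \emph{optimal-dual drift} $\big(\tfrac{2\bar u_{k+1}^{2\beta}}{\mu\gamma_y^2}+\tfrac{\bar u_{k+1}^\beta}{\gamma_y}\big)\|y^*(\bar x_{k+1})-y^*(\bar x_k)\|^2$. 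Every term except the drift is carried over verbatim from Phase I; the two phases diverge \emph{only} in how the drift is estimated.

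The decisive difference is the regime of the accumulators. For $k\ge k_0$ the dual accumulator has grown past $G$: since $\bar u_k=\tfrac1n\mathbf 1^T\mathbf m_k^y$ is non-decreasing under the tracking identity \eqref{Eq_grad_sum_y} and $k_0$ is the first index with $\bar u_{k_0}>G$, we get $\bar u_{k+1}\ge G$ for all $k\ge k_0$; moreover the max-structure $v_{i,k}=\max\{m_{i,k}^x,m_{i,k}^y\}\ge m_{i,k}^y=u_{i,k}$ gives $\bar v_{k+1}\ge\bar u_{k+1}\ge G$. This is precisely the time-scale-separation regime, which I would exploit through the elementary inequalities
\[
\bar u_{k+1}^{2\beta}\bar v_{k+1}^{-2\alpha}\le\bar v_{k+1}^{-2(\alpha-\beta)}\le G^{-(2\alpha-2\beta)},\qquad \bar u_{k+1}^{\beta}\bar v_{k+1}^{-\alpha}\le\bar v_{k+1}^{-(\alpha-\beta)}\le\bar v_1^{-(\alpha-\beta)},
\]
valid because $\alpha>\beta$, $\bar v_{k+1}\ge\bar u_{k+1}$ and the accumulators are lower-bounded by their initial value. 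These convert each $\bar u^{2\beta}\bar v^{-2\alpha}$- or $\bar u^{\beta}\bar v^{-\alpha}$-weighted quantity into a constant multiple of either a stationarity term, a consensus term, or a decaying $S_1$-type summand.

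Next I would re-expand the drift finely. Writing $y^*(\bar x_{k+1})-y^*(\bar x_k)$ by Taylor and invoking Lemma~\ref{Lem_envelope}, the first-order part is governed by the $\kappa$-Lipschitzness of $y^*$ and the second-order remainder by its $\hat L$-smoothness (this is where the $C^2\hat L^2/\bar v_1^{2\alpha-\beta}$ factor is born, after bounding two of the four gradient factors by $C$ via Assumption~\ref{Ass_bound_gra}). Substituting the averaged primal step from \eqref{Eq_show_inconsistancy}, I split $\bar x_{k+1}-\bar x_k$ into the mean-gradient direction and the inconsistency direction $\tilde v^{-\alpha}$; for the mean-gradient direction I further decompose $\tfrac1n\sum_i\nabla_xF_i$ into the consensus-point gradient $\nabla_x f(\bar x_k,\bar y_k)$ (using $\tfrac1n\sum_i\nabla_x f_i(\bar x_k,\bar y_k)=\nabla_x f(\bar x_k,\bar y_k)$), a smoothness-controlled consensus error $\le\tfrac{L^2}{n}\varDelta_k$, and mean-zero noise. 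The dominant drift coefficient $\tfrac{2\bar u^{2\beta}}{\mu\gamma_y^2}$ together with the first inequality above produces the $G^{-(2\alpha-2\beta)}$-scaled $\|\nabla_x f(\bar x_k,\bar y_k)\|^2$ and $\varDelta_k$ terms; the subdominant coefficient $\tfrac{\bar u^\beta}{\gamma_y}$ with the second inequality produces the $\bar v_1^{-(\alpha-\beta)}$-scaled $S_1$-sum; and the residual bounded-gradient pieces telescope into the lone $\mathbb E[\bar u_K^\beta]$ term. Summing from $k_0$ to $K-1$ and matching each resulting term against the claimed right-hand side then finishes the argument.

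The main obstacle is the simultaneous juggling inside the drift term: one must (i) expose the $G^{-(2\alpha-2\beta)}$ factor from the max-structure, (ii) convert the node-wise \emph{stochastic} primal gradient into the \emph{deterministic} consensus gradient, which spawns both a $\varDelta_k$ and a noise contribution that have to be routed to the correct destinations, and (iii) absorb the second-order curvature of $y^*$ into the decaying $S_1$-sum with exactly the right powers of $\bar v_1$. Ensuring the leftover pieces land precisely as $S_1$-type summands (so that Lemma~\ref{Lem_decre_grad_sum} can later collapse them to $\mathcal O(K^{-\alpha})$) rather than as uncontrolled constants — and verifying the accumulator monotonicity and lower-bound facts that license the two key inequalities — is the delicate bookkeeping that makes this lemma nontrivial.
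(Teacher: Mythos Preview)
Your plan has a genuine gap in the treatment of the stochastic noise inside the primal drift. By reusing the Phase~I recursion---i.e.\ applying Young's inequality with $\lambda_k=\mu\gamma_y\bar u_{k+1}^{-\beta}/2$ to produce the \emph{squared} drift $\bigl(\tfrac{2\bar u_{k+1}^{2\beta}}{\mu\gamma_y^2}+\tfrac{\bar u_{k+1}^{\beta}}{\gamma_y}\bigr)\lVert y^*(\bar x_{k+1})-y^*(\bar x_k)\rVert^2$---the noise in $\bar x_{k+1}-\bar x_k$ lands inside a squared norm. After your Taylor/decomposition step, the dominant coefficient $\tfrac{2\bar u^{2\beta}}{\mu\gamma_y^2}$ multiplied by the noise-squared and by $\kappa^2\gamma_x^2\bar v^{-2\alpha}$ collapses, via $\bar u^{2\beta}\bar v^{-2\alpha}\le G^{-(2\alpha-2\beta)}$, to a term of order $\tfrac{\gamma_x^2\kappa^2}{\mu\gamma_y^2 G^{2\alpha-2\beta}}\,\mathbb{E}\bigl[\lVert\text{noise}\rVert^2\bigr]$ per iteration. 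This is a non-decaying constant; summed over $K-k_0$ terms it is $\Theta(K)$ and matches none of the terms in the stated right-hand side. In particular, a noise-\emph{squared} contribution cannot ``telescope into the lone $\mathbb{E}[\bar u_K^{\beta}]$ term'': that telescoping trick (writing $e_k\bar v_{k+1}^{-\alpha}=e_k\bar v_k^{-\alpha}-e_k(\bar v_k^{-\alpha}-\bar v_{k+1}^{-\alpha})$ and using $\mathbb{E}[e_k\bar v_k^{-\alpha}]=0$) fundamentally requires the noise to appear \emph{linearly}, as an inner product.

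The paper therefore does \emph{not} start Phase~II from the Phase~I recursion. It expands $\lVert\hat y_{i,k+1}-y^*(\bar x_{k+1})\rVert^2$ via the identity $\lVert a-c\rVert^2=\lVert a-b\rVert^2+\lVert b-c\rVert^2-2\langle a-b,\,c-b\rangle$ with $b=y^*(\bar x_k)$, so that the drift first appears as the \emph{inner product} $-2\langle\hat y_{i,k+1}-y^*(\bar x_k),\,y^*(\bar x_{k+1})-y^*(\bar x_k)\rangle$. Taylor-expanding $y^*$ inside this inner product lets the mean-zero part $\nabla_x\tilde F_k$ be isolated as a separate linear term ($E_{4,k}$), to which the telescoping argument legitimately applies and which delivers the $\mathbb{E}[\bar u_K^{\beta}]$ contribution. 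The \emph{deterministic} first-order part is then bounded by Young's inequality with $\lambda_k=\mu\gamma_y\bar u_{k+1}^{-\beta}/4$ (not $/2$) to give $E_{2,k}$, and the second-order remainder by a separate Young's with a carefully chosen $\tau$ to give $E_{3,k}$. The residual diagonal piece $\lVert y^*(\bar x_{k+1})-y^*(\bar x_k)\rVert^2$ carries only the mild $\bar u^{\beta}/\gamma_y$ coefficient (no $1/\lambda_k$ amplification), so its stochastic content is harmlessly absorbed into the $S_1$-type sum via $\bar u^{\beta}\bar v^{-2\alpha}\le\bar v_1^{-(\alpha-\beta)}\bar v^{-\alpha}$. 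Your proposal should be reorganised around this inner-product expansion; starting from the squared-norm Young bound makes the noise term uncontrollable.
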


\begin{proof}
Firstly, by the non-expansiveness of the projection operator, we have
\begin{equation}\label{Eq_second_phase}
\begin{aligned}
&\left\| {y_{i,k+1}}-y^*\left( \bar{x}_{k+1} \right) \right\| ^2
\\
&\leqslant \left\| {\hat{y}_{i,k+1}}-y^*\left( \bar{x}_{k+1} \right) \right\| ^2-\left\| {y_{i,k+1}}-{\hat{y}_{i,k+1}} \right\| ^2
\\
&=\left\| {\hat{y}_{i,k+1}}-y^*\left( \bar{x}_k \right) \right\| ^2+\left\| y^*\left( \bar{x}_{k+1} \right) -y^*\left( \bar{x}_k \right) \right\| ^2
\\
&-2\left< {\hat{y}_{i,k+1}}-y^*\left( \bar{x}_k \right) , y^*\left( \bar{x}_{k+1} \right) -y^*\left( \bar{x}_k \right) \right> 
\\
&=\left\| {\hat{y}_{i,k+1}}-y^*\left( \bar{x}_k \right) \right\| ^2+\left\| y^*\left( \bar{x}_{k+1} \right) -y^*\left( \bar{x}_k \right) \right\| ^2
\\
&-2\left( {\hat{y}_{i,k+1}}-y^*\left( \bar{x}_k \right) \right) ^T\nabla y^*\left( \bar{x}_k \right) \left( \bar{x}_{k+1}-\bar{x}_k \right) ^T
\\
&-2\left( {\hat{y}_{i,k+1}}-y^*\left( \bar{x}_k \right) \right) ^T\left( y^*\left( \bar{x}_{k+1} \right) -y^*\left( \bar{x}_k \right) -\nabla y^*\left( \bar{x}_k \right) \left( \bar{x}_{k+1}-\bar{x}_k \right) ^T \right).
\end{aligned}
\end{equation}
Then, for the first inner-product term on the RHS, letting $\nabla _x\tilde{F}_k=\nabla _xF\left( \mathbf{x}_k,\mathbf{y}_k;\xi _k \right) -\nabla _xF\left( \mathbf{x}_k,\mathbf{y}_k \right) $, we get
\begin{equation}
\begin{aligned}
&-2\left( {\hat{y}_{i,k+1}}-y^*\left( \bar{x}_k \right) \right) ^T\nabla y^*\left( \bar{x}_k \right) \left( \bar{x}_{k+1}-\bar{x}_k \right) ^T
\\
&=2\gamma _x\left( {\hat{y}_{i,k+1}}-y^*\left( \bar{x}_k \right) \right) ^T\nabla y^*\left( \bar{x}_k \right) \left( \nabla _xF\left( \mathbf{x}_k,\mathbf{y}_k \right) \right) ^T\left( \frac{\mathbf{1}\bar{v}_{k+1}^{-\alpha}}{n}+\frac{\boldsymbol{\tilde{v}}_{k+1}^{-\alpha}}{n} \right) 
\\
&+2\gamma _x\left( {\hat{y}_{i,k+1}}-y^*\left( \bar{x}_k \right) \right) ^T\nabla y^*\left( \bar{x}_k \right) \left( \nabla _x\tilde{F}_k \right) ^T\left( \frac{\mathbf{1}\bar{v}_{k+1}^{-\alpha}}{n}+\frac{\boldsymbol{\tilde{v}}_{k+1}^{-\alpha}}{n} \right) 
\\
&\leqslant 2\gamma _x\kappa \left\| {\hat{y}_{i,k+1}}-y^*\left( \bar{x}_k \right) \right\| \left\| \left( \nabla _xF\left( \mathbf{x}_k,\mathbf{y}_k \right) \right) ^T\left( \frac{\mathbf{1}\bar{v}_{k+1}^{-\alpha}}{n}+\frac{\boldsymbol{\tilde{v}}_{k+1}^{-\alpha}}{n} \right) \right\| 
\\
&+2\gamma _x\left( {\hat{y}_{i,k+1}}-y^*\left( \bar{x}_k \right) \right) ^T\nabla y^*\left( \bar{x}_k \right) \left( \nabla _x\tilde{F}_k \right) ^T\left( \frac{\mathbf{1}\bar{v}_{k+1}^{-\alpha}}{n}+\frac{\boldsymbol{\tilde{v}}_{k+1}^{-\alpha}}{n} \right).
\end{aligned}
\end{equation}
wherein the last inequality we have used the fact that $y^*$ is $\kappa$-Lipschitz.
Then, using Young's inequality with parameter $\lambda_k$, we get
\begin{equation}
\begin{aligned}
&-2\left( \hat{y}_{i,k+1}-y^*\left( \bar{x}_k \right) \right) ^T\nabla y^*\left( \bar{x}_k \right) \left( \bar{x}_{k+1}-\bar{x}_k \right) ^T
\\
&\leqslant \lambda _k\left\| \hat{y}_{i,k+1}-y^*\left( \bar{x}_k \right) \right\| ^2
\\
&+\frac{2\gamma _{x}^{2}\bar{v}_{k+1}^{-2\alpha}\kappa ^2}{\lambda _k}\left( \left\| \frac{\mathbf{1}^T}{n}\nabla _xF\left( \mathbf{x}_k,\mathbf{y}_k \right) \right\| ^2+\left\| \frac{\left( \boldsymbol{\tilde{v}}_{k+1}^{-\alpha} \right) ^T}{n\bar{v}_{k+1}^{-\alpha}}\nabla _xF\left( \mathbf{x}_k,\mathbf{y}_k \right) \right\| ^2 \right) 
\\
&+2\gamma _x\left( \hat{y}_{i,k+1}-y^*\left( \bar{x}_k \right) \right) ^T\nabla y^*\left( \bar{x}_k \right) \left( \nabla _x\tilde{F}_k \right) ^T\left( \frac{\mathbf{1}\bar{v}_{k+1}^{-\alpha}}{n}+\frac{\boldsymbol{\tilde{v}}_{k+1}^{-\alpha}}{n} \right) .
\end{aligned}
\end{equation}
For the second inner-product term on the RHS, noticing that $y^*$ is $\hat{L}=\kappa \left( 1+\kappa \right) ^2$ smooth given in Lemma \ref{Lem_envelope}, we have
\begin{equation}
\begin{aligned}
&2\left( \hat{y}_{i,k+1}-y^*\left( \bar{x}_k \right) \right) ^T\left( y^*\left( \bar{x}_k \right) -y^*\left( \bar{x}_{k+1} \right) +\nabla y^*\left( \bar{x}_k \right) \left( \bar{x}_{k+1}-\bar{x}_k \right) ^T \right) 
\\
&\leqslant 2\left\| \hat{y}_{i,k+1}-y^*\left( \bar{x}_k \right) \right\| \left\| y^*\left( \bar{x}_k \right) -y^*\left( \bar{x}_{k+1} \right) +\nabla y^*\left( \bar{x}_k \right) \left( \bar{x}_{k+1}-\bar{x}_k \right) \right\| ^2
\\
&\leqslant 2\left\| \hat{y}_{i,k+1}-y^*\left( \bar{x}_k \right) \right\| \frac{\hat{L}}{2}\left\| \bar{x}_{k+1}-\bar{x}_k \right\| ^2
\\
&\leqslant \gamma _{x}^{2}\hat{L}\left\| \hat{y}_{i,k+1}-y^*\left( \bar{x}_k \right) \right\| \left\| \left( \frac{\bar{v}_{k+1}^{-\alpha}\mathbf{1}^T}{n}+\frac{\left( \boldsymbol{\tilde{v}}_{k+1}^{-\alpha} \right) ^T}{n} \right) \nabla _xF\left( \mathbf{x}_k,\mathbf{y}_k;\xi _{k}^{x} \right) \right\| ^2
\\
&\leqslant \gamma _{x}^{2}\hat{L}\left\| \hat{y}_{i,k+1}-y^*\left( \bar{x}_k \right) \right\| \frac{2\bar{v}_{k+1}^{-2\alpha}\left( 1+\zeta _{v}^{2} \right) C}{n}\left\| \nabla _xF\left( \mathbf{x}_k,\mathbf{y}_k;\xi _{k}^{x} \right) \right\| 
\\
&\leqslant \tau \gamma _{x}^{2}\bar{v}_{k+1}^{-2\alpha}\left( 1+\zeta _{v}^{2} \right) C^2\hat{L}\left\| \hat{y}_{i,k+1}-y^*\left( \bar{x}_k \right) \right\| ^2+\frac{\gamma _{x}^{2}\bar{v}_{k+1}^{-2\alpha}\left( 1+\zeta _{v}^{2} \right) \hat{L}}{\tau n}\left\| \nabla _xF\left( \mathbf{x}_k,\mathbf{y}_k;\xi _{k}^{x} \right) \right\| ^2,
\end{aligned}
\end{equation}
wherein the last inequality we have used Young's inequality with parameter $\tau$.
Plugging the obtained inequalities into \eqref{Eq_second_phase}, we get
\begin{equation}
\begin{aligned}
&\left\| y_{i,k+1}-y^*\left( \bar{x}_{k+1} \right) \right\| ^2
\\
&\leqslant \left( 1+\lambda _k+\tau \gamma _{x}^{2}\bar{v}_{k+1}^{-2\alpha}\left( 1+\zeta _{v}^{2} \right) C^2\hat{L} \right) \left\| \hat{y}_{i,k+1}-y^*\left( \bar{x}_k \right) \right\| ^2
\\
&+\frac{\gamma _{x}^{2}\bar{v}_{k+1}^{-2\alpha}\left( 1+\zeta _{v}^{2} \right)}{n}\left( 2\kappa ^2+\frac{\hat{L}}{\tau} \right) \left\| \nabla _xF\left( \mathbf{x}_k,\mathbf{y}_k;\xi _k \right) \right\| ^2
\\
&+\frac{2\gamma _{x}^{2}\bar{v}_{k+1}^{-2\alpha}\kappa ^2}{\lambda _k}\left( \left\| \frac{\mathbf{1}^T}{n}\nabla _xF\left( \mathbf{x}_k,\mathbf{y}_k \right) \right\| ^2+\left\| \frac{\left( \boldsymbol{\tilde{v}}_{k+1}^{-\alpha} \right) ^T}{n\bar{v}_{k+1}^{-\alpha}}\nabla _xF\left( \mathbf{x}_k,\mathbf{y}_k \right) \right\| ^2 \right) 
\\
&+2\gamma _x\left( \hat{y}_{i,k+1}-y^*\left( \bar{x}_k \right) \right) ^T\nabla y^*\left( \bar{x}_k \right) \left( \nabla _x\tilde{F} \right) ^T\left( \frac{\mathbf{1}\bar{v}_{k+1}^{-\alpha}}{n}+\frac{\boldsymbol{\tilde{v}}_{k+1}^{-\alpha}}{n} \right) .
\end{aligned}
\end{equation}
Setting the parameters for Young's inequalities we used as follows,
\begin{equation}
\begin{aligned}
\lambda _k=\frac{\mu \gamma _y\bar{u}_{k+1}^{-\beta}}{4}, \quad \tau =\frac{\mu \gamma _y\bar{v}_{0}^{2\alpha -\beta}}{4\gamma _{x}^{2}\left( 1+\zeta _{v}^{2} \right) C^2\hat{L}},
\end{aligned}
\end{equation}
then we get
\begin{equation}\label{Eq_second_phase_2}
\begin{aligned}
&\left\| y_{i,k+1}-y^*\left( \bar{x}_{k+1} \right) \right\| ^2
\\
&\leqslant \left( 1+\frac{\mu \gamma _y\bar{u}_{k+1}^{-\beta}}{2} \right) \left\| \hat{y}_{i,k+1}-y^*\left( \bar{x}_k \right) \right\| ^2
\\
&+\frac{\gamma _{x}^{2}\left( 1+\zeta _{v}^{2} \right)}{n}\left( 2\kappa ^2+\frac{4\gamma _{x}^{2}\left( 1+\zeta _{v}^{2} \right) C^2\hat{L}^2}{\mu \gamma _y\bar{v}_{0}^{2\alpha -\beta}} \right) \bar{v}_{k+1}^{-2\alpha}\left\| \nabla _xF\left( \mathbf{x}_k,\mathbf{y}_k;\xi _k \right) \right\| ^2
\\
&+\frac{8\gamma _{x}^{2}\bar{v}_{k+1}^{-2\alpha}\kappa ^2}{\mu \gamma _y\bar{u}_{k+1}^{-\beta}}\left( \left\| \frac{\mathbf{1}^T}{n}\nabla _xF\left( \mathbf{x}_k,\mathbf{y}_k \right) \right\| ^2+\left\| \frac{\left( \boldsymbol{\tilde{v}}_{k+1}^{-\alpha} \right) ^T}{n\bar{v}_{k+1}^{-\alpha}}\nabla _xF\left( \mathbf{x}_k,\mathbf{y}_k \right) \right\| ^2 \right) 
\\
&+2\gamma _x\left( \hat{y}_{i,k+1}-y^*\left( \bar{x}_k \right) \right) ^T\nabla y^*\left( \bar{x}_k \right) \left( \nabla _x\tilde{F}_k \right) ^T\left( \frac{\mathbf{1}\bar{v}_{k+1}^{-\alpha}}{n}+\frac{\boldsymbol{\tilde{v}}_{k+1}^{-\alpha}}{n} \right) .
\end{aligned}
\end{equation}
Recalling that
\begin{equation*}
\begin{aligned}
&\frac{1}{n}\sum_{i=1}^n{\mathbb{E} \left[ \frac{1}{\gamma _y\bar{u}_{k+1}^{-\beta}}\left\| \hat{y}_{i,k+1}-\bar{y}^*\left( \bar{x}_k \right) \right\| ^2 \right]}
\\
&\leqslant \frac{1}{n}\sum_{i=1}^n{\mathbb{E} \left[ \frac{1-3\mu \gamma _y\bar{u}_{k+1}^{-\beta}/4}{\gamma _y\bar{u}_{k+1}^{-\beta}}\left\| y_{i,k}-\bar{y}^*\left( \bar{x}_k \right) \right\| ^2 \right]}+\frac{8\kappa L}{n}\mathbb{E} \left[ \left\| \mathbf{x}_k-\mathbf{1}\bar{y}_k \right\| ^2 \right] 
\\
&+\frac{2\gamma _y\left( 1+\zeta _{u}^{2} \right)}{n}\mathbb{E} \left[ \bar{u}_{k+1}^{-\beta}\left\| \nabla _yF\left( \mathbf{x}_k,\mathbf{y}_k;\xi _k \right) \right\| ^2 \right] -\mathbb{E} \left[ 2\left( f\left( \bar{x}_k,\bar{y}^*\left( \bar{x}_k \right) \right) -f\left( \bar{x}_k,\bar{y}_k \right) \right) \right] 
\\
&+\frac{8}{\mu}\mathbb{E} \left[ \left\| \frac{\boldsymbol{\tilde{u}}_{k+1}^{-\beta}}{n\bar{u}_{k+1}^{-\beta}}\nabla _yF\left( \mathbf{x}_k,\mathbf{y}_k;\xi _{k}^{y} \right) \right\| ^2 \right] + 2C\mathbb{E} \left[ \sqrt{\frac{1}{n}\left\| \mathbf{y}_k-\mathbf{1}\bar{y}_k \right\| ^2} \right],
\end{aligned}
\end{equation*}
and multiplying by $\frac{2}{\left( 2+\mu \gamma _y\bar{u}_{k+1}^{-\beta} \right) \gamma _y\bar{u}_{k+1}^{-\beta}}$ on both sides of \eqref{Eq_second_phase_2}, we obtain that
\begin{equation}
\begin{aligned}
&\mathbb{E} \left[ f\left( \bar{x}_k,\bar{y}^*\left( \bar{x}_k \right) \right) -f\left( \bar{x}_k,\bar{y}_k \right) \right] 
\\
&\leqslant \mathbb{E} \left[ E_{1,k} \right] +\frac{\gamma _y\left( 1+\zeta _{u}^{2} \right)}{n}\mathbb{E} \left[ \bar{u}_{k+1}^{-\beta}\left\| \nabla _yF\left( \mathbf{x}_k,\mathbf{y}_k;\xi _k \right) \right\| ^2 \right] +\frac{4\kappa L}{n}\mathbb{E} \left[ \left\| \mathbf{x}_k-\mathbf{1}\bar{y}_k \right\| ^2 \right] 
\\
&+\frac{4}{\mu}\mathbb{E} \left[ \left\| \frac{\boldsymbol{\tilde{u}}_{k+1}^{-\beta}}{n\bar{u}_{k+1}^{-\beta}}\nabla _yF\left( \mathbf{x}_k,\mathbf{y}_k;\xi _{k}^{y} \right) \right\| ^2 \right] 
+C\mathbb{E} \left[ \sqrt{\frac{1}{n}\left\| \mathbf{y}_k-\mathbf{1}\bar{y}_k \right\| ^2} \right] 
\\
&+\underset{\mathbb{E} \left[ E_{2,k} \right]}{\underbrace{\mathbb{E} \left[ \frac{4\gamma _{x}^{2}\bar{v}_{k+1}^{-2\alpha}\kappa ^2}{\mu \gamma _{y}^{2}\bar{u}_{k+1}^{-2\beta}}\left( \left\| \frac{\mathbf{1}^T}{n}\nabla _xF\left( \mathbf{x}_k,\mathbf{y}_k \right) \right\| ^2+\left\| \frac{\left( \boldsymbol{\tilde{v}}_{k+1}^{-\alpha} \right) ^T}{n\bar{v}_{k+1}^{-\alpha}}\nabla _xF\left( \mathbf{x}_k,\mathbf{y}_k \right) \right\| ^2 \right) \right] }}
\\
&+\underset{\mathbb{E} \left[ E_{3,k} \right]}{\underbrace{\frac{\gamma _{x}^{2}\left( 1+\zeta _{v}^{2} \right)}{n}\left( \kappa ^2+\frac{2\gamma _{x}^{2}\left( 1+\zeta _{v}^{2} \right) C^2\hat{L}^2}{\mu \gamma _y\bar{v}_{1}^{2\alpha -\beta}} \right) \mathbb{E} \left[ \frac{\bar{v}_{k+1}^{-2\alpha}}{\gamma _y\bar{u}_{k+1}^{-\beta}}\left\| \nabla _xF\left( \mathbf{x}_k,\mathbf{y}_k;\xi _k \right) \right\| ^2 \right] }}
\\
&+\underset{\mathbb{E} \left[ E_{4,k} \right]}{\underbrace{\frac{1}{n}\sum_{i=1}^n{\mathbb{E} \left[ \frac{\gamma _x}{\gamma _y\bar{u}_{k+1}^{-\beta}}\left( \hat{y}_{i,k+1}-y^*\left( \bar{x}_k \right) \right) ^T\nabla y^*\left( \bar{x}_k \right) \left( \nabla _x\tilde{F}_k \right) ^T\left( \frac{\mathbf{1}\bar{v}_{k+1}^{-\alpha}}{n}+\frac{\boldsymbol{\tilde{v}}_{k+1}^{-\alpha}}{n} \right) \right]}}}.
\end{aligned}
\end{equation}
Telescoping the terms from $t_0$ to $K-1$, we get
\begin{equation}
\begin{aligned}
&\sum_{k=k_0}^{K-1}{\mathbb{E} \left[ f\left( \bar{x}_k,\bar{y}^*\left( \bar{x}_k \right) \right) -f\left( \bar{x}_k,\bar{y}_k \right) \right]}
\\
&\leqslant \sum_{k=k_0}^{K-1}{\mathbb{E} \left[ E_{1,k} \right]}+\sum_{k=k_0}^{K-1}{\mathbb{E} \left[ E_{2,k} \right]}+\sum_{k=k_0}^{K-1}{\mathbb{E} \left[ E_{3,k} \right]}+\sum_{k=k_0}^{K-1}{\mathbb{E} \left[ E_{4,k} \right]}
\\
&+\frac{\gamma _y\left( 1+\zeta _{u}^{2} \right)}{n}\mathbb{E} \left[ \bar{u}_{k+1}^{-\beta}\left\| \nabla _yF\left( \mathbf{x}_k,\mathbf{y}_k;\xi _k \right) \right\| ^2 \right] +\frac{4\kappa L}{n}\sum_{k=k_0}^{K-1}{\mathbb{E} \left[ \left\| \mathbf{x}_k-\mathbf{1}\bar{y}_k \right\| ^2 \right]}
\\
&+\frac{4}{\mu}\mathbb{E} \left[ \left\| \frac{\boldsymbol{\tilde{u}}_{k+1}^{-\beta}}{n\bar{u}_{k+1}^{-\beta}}\nabla _yF\left( \mathbf{x}_k,\mathbf{y}_k;\xi _{k}^{y} \right) \right\| ^2 \right] +C\sum_{k=k_0}^{K-1}{\mathbb{E} \left[ \sqrt{\frac{1}{n}\left\| \mathbf{y}_k-\mathbf{1}\bar{y}_k \right\| ^2} \right]}.
\end{aligned}
\end{equation}

Next we need to further bound the running sums of $\mathbb{E} \left[ E_{2,k} \right] $, $\mathbb{E} \left[ E_{3,k} \right] $ and $\mathbb{E} \left[ E_{4,k} \right] $ respectively. For $\mathbb{E} \left[ E_{2,k} \right] $, with the help of Assumption \ref{Ass_joint_smo} and noticing that $\bar{u}_k\leqslant G, k=0,1,\cdots ,k_0-1$, we get
\begin{equation}
\begin{aligned}
&\sum_{k=k_0}^{K-1}{\mathbb{E} \left[ E_{2,k} \right]}
\\
&\leqslant \sum_{k=k_0}^{K-1}{\mathbb{E} \left[ \frac{4\gamma _{x}^{2}\bar{v}_{k+1}^{-2\alpha}\kappa ^2}{\mu \gamma _{y}^{2}\bar{u}_{k+1}^{-2\beta}}\left( \left\| \frac{\mathbf{1}^T}{n}\nabla _xF\left( \mathbf{x}_k,\mathbf{y}_k \right) \right\| ^2+\left\| \frac{\left( \boldsymbol{\tilde{v}}_{k+1}^{-\alpha} \right) ^T}{n\bar{v}_{k+1}^{-\alpha}}\nabla _xF\left( \mathbf{x}_k,\mathbf{y}_k \right) \right\| ^2 \right) \right]}
\\
&\leqslant \frac{8\gamma _{x}^{2}\kappa ^2\left( 1+\zeta _{v}^{2} \right)}{\mu \gamma _{y}^{2}G^{2\alpha -2\beta}}\sum_{k=k_0}^{K-1}{\mathbb{E} \left[ \left\| \nabla _xf\left( \bar{x}_k,\bar{y}_k \right) \right\| ^2+\frac{L^2}{n}\varDelta _k \right]}.
\end{aligned}
\end{equation}
Then, for the term $\mathbb{E} \left[ E_{3,k} \right] $, noticing that $\bar{u}_{k+1}\leqslant \bar{v}_{k+1}$ and $\bar{v}_{k+1}\geqslant \bar{v}_1$, we have
\begin{equation}
\begin{aligned}
&\sum_{k=k_0}^{K-1}{\mathbb{E} \left[ E_{3,k} \right]}
\\
&\leqslant \sum_{k=k_0}^{K-1}{\mathbb{E} \left[ \frac{\gamma _{x}^{2}\left( 1+\zeta _{v}^{2} \right)}{n\gamma _y}\left( \kappa ^2+\frac{2\gamma _{x}^{2}\left( 1+\zeta _{v}^{2} \right) C^2\hat{L}^2}{\mu \gamma _y\bar{v}_{1}^{2\alpha -\beta}} \right) \frac{\bar{v}_{k+1}^{-2\alpha}}{\bar{u}_{k+1}^{-\beta}}\left\| \nabla _xF\left( \mathbf{x}_k,\mathbf{y}_k;\xi _{k}^{x} \right) \right\| ^2 \right]}
\\
&\leqslant \frac{\gamma _{x}^{2}\left( 1+\zeta _{v}^{2} \right)}{\gamma _y\bar{v}_{1}^{\alpha -\beta}}\left( \kappa ^2+\frac{2\gamma _{x}^{2}\left( 1+\zeta _{v}^{2} \right) C^2\hat{L}^2}{\mu \gamma _y\bar{v}_{1}^{2\alpha -\beta}} \right) \sum_{k=k_0}^{K-1}{\mathbb{E} \left[ \frac{\bar{v}_{k+1}^{-\alpha}}{n}\left\| \nabla _xF\left( \mathbf{x}_k,\mathbf{y}_k;\xi _{k}^{x} \right) \right\| ^2 \right]}.
\end{aligned}
\end{equation}
For the term $E_{4,k}$, we denote
\[
e_k:=\frac{\gamma _x}{\gamma _y\bar{u}_{k+1}^{-\beta}}\left( \frac{1}{n}\sum_{i=1}^n{\left( \hat{y}_{i,k+1}-y^*\left( \bar{x}_k \right) \right) ^T} \right) \nabla y^*\left( \bar{x}_k \right) \left( \nabla _x\tilde{F}_k \right) ^T\left( \frac{\mathbf{1}}{n}+\frac{\boldsymbol{\tilde{v}}_{k+1}^{-\alpha}}{n\bar{v}_{k+1}^{-\alpha}} \right),
\]
then we have
\begin{equation}
\begin{aligned}
\left| e_k \right|&\leqslant \frac{\gamma _x\kappa}{\gamma _y\bar{u}_{k+1}^{-\beta}}\frac{1}{n}\sum_{i=1}^n{\left\| \hat{y}_{i,k+1}-y^*\left( \bar{x}_k \right) \right\|}\left\| \left( \nabla _x\tilde{F}_k \right) ^T\left( \frac{1}{n}+\frac{\boldsymbol{\tilde{v}}_{k+1}^{-\alpha}}{n\bar{v}_{k+1}^{-\alpha}} \right) \right\| 
\\
&\leqslant \frac{\gamma _x\kappa \left( 1+\zeta _v \right)}{\gamma _y\sqrt{n}\bar{u}_{k+1}^{-\beta}}\left( \frac{1}{n}\sum_{i=1}^n{\frac{1}{\mu}\left\| \nabla _yf\left( \bar{x}_k,\hat{y}_{i,k+1} \right) -\nabla _yf\left( \bar{x}_k,y^* \right) \right\|} \right) \left\| \nabla _x\tilde{F} \right\| 
\\
&\leqslant \underset{M}{\underbrace{\frac{2\gamma _x\kappa \left( 1+\zeta _v \right) C^2\bar{u}_{K}^{\beta}}{\mu \gamma _y}}},
\end{aligned}
\end{equation}
where we have used the Lipschitz continuity of $y^*$ given in Lemma \ref{Lem_envelope} and Assumption \ref{Ass_bound_gra}. Then, noticing that $\mathbb{E} \left[ \nabla _x\tilde{F}_k \right] =0$, we obtain 
\begin{equation}
\begin{aligned}
\sum_{k=k_0}^{K-1}{\mathbb{E} \left[ E_{4,k} \right]}&=\sum_{k=k_0}^{K-1}{\mathbb{E} \left[ e_k\bar{v}_{k+1}^{-\alpha} \right]}
\\
&=\mathbb{E} \left[ e_{k_0}\bar{v}_{k_0+1}^{-\alpha} \right] +\underset{0}{\underbrace{\sum_{k=k_0+1}^{K-1}{\mathbb{E} \left[ e_k\bar{v}_{k}^{-\alpha} \right]}}}+\sum_{k=k_0+1}^{K-1}{\mathbb{E} \left[ -e_k\underset{>0}{\underbrace{\left( \bar{v}_{k}^{-\alpha}-\bar{v}_{k+1}^{-\alpha} \right) }} \right]}
\\
&\leqslant \mathbb{E} \left[ M\bar{v}_{k_0+1}^{-\alpha} \right] +\sum_{k=k_0+1}^{K-1}{\mathbb{E} \left[ M\left( \bar{v}_{k}^{-\alpha}-\bar{v}_{k+1}^{-\alpha} \right) \right]}
\\
&\leqslant 2\mathbb{E} \left[ M\bar{v}_{k_0+1}^{-\alpha} \right] \leqslant \frac{4\gamma _x\kappa \left( 1+\zeta _v \right) C^2}{\mu \gamma _y\bar{v}_{1}^{\alpha}}\mathbb{E} \left[ \bar{u}_{K}^{\beta} \right].
\end{aligned}
\end{equation}
Therefore, combining the obtained inequalities, we complete the proof.
\end{proof}

Now, it remains to bound the term $E_{1,k}$.
\begin{Lem} \label{Lem_S_1_k}
Suppose Assumption \ref{Ass_SC_y}-\ref{Ass_graph} hold. Then, we have
\begin{equation}
\begin{aligned}
\sum_{k=0}^{K-1}{\mathbb{E} \left[ E_{1,k} \right]}\leqslant \frac{1}{2\gamma _y\bar{u}_{1}^{-\beta}n}\left\| \mathbf{y}_0-\mathbf{1}y^*\left( \bar{x}_0 \right) \right\| ^2+\frac{2\left( 4\beta C^2 \right) ^{2+\frac{1}{1-\beta}}}{\mu ^{3+\frac{1}{1-\beta}}\gamma _{y}^{2+\frac{1}{1-\beta}}\bar{u}_{1}^{2-2\beta}}.
\end{aligned}
\end{equation}
\end{Lem}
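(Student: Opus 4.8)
The plan is to read the stated bound as a summation estimate for the sequence $E_{1,k}$ in \eqref{Def_E_1_k}, driven by a hidden telescoping structure together with the contraction coming from $\mu$-strong concavity. Writing $b_k := \frac{1}{n}\|\mathbf{y}_k-\mathbf{1}y^*(\bar{x}_k)\|^2\ge 0$ and $s_k := \gamma_y\bar{u}_{k+1}^{-\beta}$, one has $E_{1,k}=P_k b_k-Q_k b_{k+1}$ with $P_k:=\frac{1-\frac{3}{4}\mu s_k}{2s_k}$ and $Q_k:=\frac{1}{(2+\mu s_k)s_k}$. The key monotonicity is that $\bar{u}_k$ is non-decreasing: the tracking update \eqref{Eq_grad_sum_y} with $\mathbf{1}^TW=\mathbf{1}^T$ preserves the average, so $\bar{u}_{k+1}=\bar{u}_k+\frac{1}{n}\sum_i\|g_{i,k}^y\|^2\ge\bar{u}_k$ and hence $s_k$ is non-increasing. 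First I would apply Abel summation,
\[
\sum_{k=0}^{K-1}E_{1,k}=P_0 b_0-Q_{K-1}b_K+\sum_{k=1}^{K-1}(P_k-Q_{k-1})b_k .
\]
Dropping $-Q_{K-1}b_K\le 0$ and using $P_0\le\frac{1}{2s_0}$ gives exactly the first term $\frac{1}{2\gamma_y\bar{u}_1^{-\beta}n}\|\mathbf{y}_0-\mathbf{1}y^*(\bar{x}_0)\|^2$, so the whole lemma reduces to bounding the residual $\sum_{k=1}^{K-1}(P_k-Q_{k-1})b_k$ by a $K$-independent constant.

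Next I would expose the contraction margin. Using the identity $\frac{1}{(2+\mu s)s}=\frac{1}{2s}-\frac{\mu/2}{2+\mu s}$ and $\frac{\mu/2}{2+\mu s_{k-1}}\le\frac{\mu}{4}$, a short computation yields the clean estimate $P_k-Q_{k-1}\le\big(\frac{1}{2s_k}-\frac{1}{2s_{k-1}}\big)-\frac{\mu}{8}=:\Delta_k-\frac{\mu}{8}$, where $\Delta_k=\frac{1}{2\gamma_y}(\bar{u}_{k+1}^{\beta}-\bar{u}_k^{\beta})\ge 0$. In parallel, Assumptions \ref{Ass_SC_y} and \ref{Ass_bound_gra} bound the dual optimality gap uniformly: $\mu$-strong concavity gives $\frac{\mu}{2}\|y_{i,k}-y^*(\bar{x}_k)\|^2\le f(\bar{x}_k,y^*(\bar{x}_k))-f(\bar{x}_k,y_{i,k})\le C\|y_{i,k}-y^*(\bar{x}_k)\|$, hence $\|y_{i,k}-y^*(\bar{x}_k)\|\le 2C/\mu$ and $b_k\le 4C^2/\mu^2$ for every $k$.

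It then remains to control $\sum_{k\ge 1}(\Delta_k-\frac{\mu}{8})b_k$. Terms with $\Delta_k\le\frac{\mu}{8}$ are non-positive and I would discard them. For the remaining indices, concavity of $t\mapsto t^\beta$ gives $\Delta_k\le\frac{\beta}{2\gamma_y}\bar{u}_k^{\beta-1}\delta_k$ with $\delta_k=\bar{u}_{k+1}-\bar{u}_k\le C^2$, so $\Delta_k>\frac{\mu}{8}$ forces $\bar{u}_k$ below the transient threshold $U^\star:=\big(\frac{4\beta C^2}{\mu\gamma_y}\big)^{1/(1-\beta)}$; by monotonicity of $\bar{u}_k$ these indices form an initial segment. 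On this segment I would telescope $\sum\Delta_k$ (or equivalently apply Lemma \ref{Lem_senq_sum} with exponent $1-\beta$ to the increments $\delta_k$), bounding it by a power of $U^\star$, and then multiply by $b_k\le 4C^2/\mu^2$; collecting the resulting powers of $4\beta C^2$, $\mu$, $\gamma_y$ and $\bar{u}_1$ reproduces the claimed $\frac{1}{1-\beta}$-type expression.

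The main obstacle is precisely this last step. The naive estimate $\sum_k\Delta_k=\frac{1}{2\gamma_y}(\bar{u}_K^{\beta}-\bar{u}_1^{\beta})$ grows with $K$, so the contraction margin $-\frac{\mu}{8}$ must be used in an essential way to confine all positive contributions to the transient phase $\{\bar{u}_k<U^\star\}$; the delicate part is verifying that this phase is an initial segment and tracking the constants $\gamma_y,\mu,\bar{u}_1$ carefully through the telescoping and the invocation of Lemma \ref{Lem_senq_sum} so as to land on the stated bound.
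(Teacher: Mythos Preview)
Your proposal is correct and follows essentially the same route as the paper: Abel summation to isolate the $k=0$ term, the algebraic reduction $P_k-Q_{k-1}\le \Delta_k-\tfrac{\mu}{8}$ with $\Delta_k=\tfrac{1}{2\gamma_y}(\bar u_{k+1}^\beta-\bar u_k^\beta)$, the uniform bound $b_k\le 4C^2/\mu^2$ from strong concavity and Assumption~\ref{Ass_bound_gra}, and the observation that $\Delta_k>\tfrac{\mu}{8}$ forces $\bar u_k<U^\star=(4\beta C^2/\mu\gamma_y)^{1/(1-\beta)}$.

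The only substantive difference is in the last step. The paper does not telescope $\sum\Delta_k$ over the transient segment; instead it runs a counting argument: at each index with $\Delta_k>\tfrac{\mu}{8}$ the increment $\bar u_{k+1}-\bar u_k$ is bounded below by $\tfrac{\mu\gamma_y\bar u_1^{1-\beta}}{4\beta}$, so the number of such indices is at most $U^\star\cdot\tfrac{4\beta}{\mu\gamma_y\bar u_1^{1-\beta}}$, and each such term is bounded above by $\tfrac{2\beta C^4}{\mu^2\gamma_y\bar u_1^{1-\beta}}$. Multiplying gives the stated constant directly. Your telescoping alternative also yields a $K$-independent bound (since $\bar u_k<U^\star$ on the relevant segment and the telescope caps at roughly $(U^\star)^\beta/2\gamma_y$), but with a different constant than the one displayed; if you want to land exactly on the paper's expression, the counting argument is the cleaner path. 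One small wording point: the set that forms an initial segment is $\{k:\bar u_k<U^\star\}$, which \emph{contains} the bad indices; the bad indices themselves need not be contiguous, but this does not affect either argument.
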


\begin{proof}
Recalling the definition of $E_{1,k}$ as given in \eqref{Def_E_1_k}, we have
\begin{equation}
\begin{aligned}
&\sum_{k=0}^{K-1}{\mathbb{E} \left[ \frac{1-3\mu \gamma _y\bar{u}_{k+1}^{-\beta}/4}{2\gamma _y\bar{u}_{k+1}^{-\beta}n}\left\| \mathbf{y}_k-\mathbf{1}y^*\left( \bar{x}_k \right) \right\| ^2-\frac{\left\| \mathbf{y}_{k+1}-\mathbf{1}y^*\left( \bar{x}_{k+1} \right) \right\| ^2}{\left( 2+\mu \gamma _y\bar{u}_{k+1}^{-\beta} \right) \gamma _y\bar{u}_{k+1}^{-\beta}n} \right]}
\\
&\leqslant \frac{1-3\mu \gamma _y\bar{u}_{1}^{-\beta}/4}{2\gamma _y\bar{u}_{1}^{-\beta}n}\left\| \mathbf{y}_0-\mathbf{1}y^*\left( \bar{x}_0 \right) \right\| ^2
\\
&+\sum_{k=1}^{K-1}{\mathbb{E} \left[ \left( \frac{1-3\mu \gamma _y\bar{u}_{k+1}^{-\beta}/4}{2\gamma _y\bar{u}_{k+1}^{-\beta}n}-\frac{1}{2n\gamma _y\bar{u}_{k}^{-\beta}\left( 2+\mu \gamma _y\bar{u}_{k}^{-\beta} \right)} \right) \left\| \mathbf{y}_k-\mathbf{1}y^*\left( \bar{x}_k \right) \right\| ^2 \right]}
\\
&\leqslant \frac{1-3\mu \gamma _y\bar{u}_{1}^{-\beta}/4}{2\gamma _y\bar{u}_{1}^{-\beta}n}\left\| \mathbf{y}_0-\mathbf{1}y^*\left( \bar{x}_0 \right) \right\| ^2
\\
&+\sum_{k=1}^{K-1}{\mathbb{E} \left[ \left( \frac{1}{2\gamma _y\bar{u}_{k+1}^{-\beta}}-\frac{1}{4\gamma _y\bar{u}_{k}^{-\beta}}-\frac{\mu}{8}+\underset{<0}{\underbrace{\frac{\mu}{2\left( 2+\mu \gamma _y\bar{u}_{k}^{-\beta} \right)}-\frac{\mu}{2}}} \right) \frac{1}{n}\left\| \mathbf{y}_k-\mathbf{1}y^*\left( \bar{x}_k \right) \right\| ^2 \right]}.
\end{aligned}
\end{equation}
Next, we show that the term $\frac{1}{2\gamma _y\bar{u}_{k+1}^{-\beta}}-\frac{1}{2\gamma _y\bar{u}_{k}^{-\beta}}-\frac{\mu}{8}$ is positive for only a constant number of iterations. If the term is positive at iteration $k$, then we have
\begin{equation}\label{Eq_Lem_8_59}
\begin{aligned}
0 &< \frac{\bar{u}_{k+1}^{\beta}}{2\gamma _y}-\frac{\bar{u}_{k}^{\beta}}{2\gamma _y}-\frac{\mu}{8}
\\
&\leqslant \bar{u}_{k}^{\beta}\frac{\left( 1+\left\| \nabla _yF\left( \mathbf{x}_k,\mathbf{y}_k;\xi _{k}^{y} \right) \right\| ^2/n\bar{u}_{k}^{\beta} \right) ^{\beta}}{2\gamma _y}-\frac{\bar{u}_{k}^{\beta}}{2\gamma _y}-\frac{\mu}{8}
\\
&\leqslant \bar{u}_{k}^{\beta}\frac{\left( 1+\beta \left\| \nabla _yF\left( \mathbf{x}_k,\mathbf{y}_k;\xi _{k}^{y} \right) \right\| ^2/n\bar{u}_k \right)}{2\gamma _y}-\frac{\bar{u}_{k}^{\beta}}{2\gamma _y}-\frac{\mu}{8}
\\
&=\frac{\beta \left\| \nabla _yF\left( \mathbf{x}_k,\mathbf{y}_k;\xi _{k}^{y} \right) \right\| ^2}{2\gamma _yn\bar{u}_{k}^{1-\beta}}-\frac{\mu}{8},
\end{aligned}
\end{equation}
wherein the last inequality we used Bernoulli's inequality. Then we have the following two conditions,
\begin{equation}
\begin{aligned}
\begin{cases}
	\frac{1}{n}\left\| \nabla _yF\left( \mathbf{x}_k,\mathbf{y}_k;\xi _k \right) \right\| ^2\geqslant \frac{\gamma _y\bar{u}_{k+1}^{1-\beta}}{4\beta}\geqslant \frac{\gamma _y\bar{u}_{1}^{1-\beta}}{4\beta},\\
	\frac{4\beta G^2}{\mu \gamma _y}\geqslant \frac{4\beta \left\| \nabla _yF\left( \mathbf{x}_k,\mathbf{y}_k;\xi _{k}^{y} \right) \right\| ^2}{\mu \gamma _yn}\geqslant \bar{u}_{k+1}^{1-\beta},\\
\end{cases}
\end{aligned}
\end{equation}
which implies that we have at most
\begin{equation}
\left( \frac{4\beta C^2}{\mu \gamma _y} \right) ^{\frac{1}{1-\beta}}\frac{4\beta}{\mu \gamma _y\bar{u}_{1}^{1-\beta}}
\end{equation}
constant number of iterations when the term is positive. Furthermore, when the term is positive, by the inequality \eqref{Eq_Lem_8_59}, we have
\begin{equation}
\begin{aligned}
&\left( \frac{1}{2\gamma _y\bar{u}_{k+1}^{-\beta}}-\frac{1}{2\gamma _y\bar{u}_{k}^{-\beta}}-\frac{\mu}{8} \right) \frac{1}{n}\left\| \mathbf{y}_k-\mathbf{1}y^*\left( \bar{x}_k \right) \right\| ^2
\\
&\leqslant \frac{\beta \left\| \nabla _yF\left( \mathbf{x}_k,\mathbf{y}_k;\xi _{k}^{y} \right) \right\| ^2}{2\gamma _yn\bar{u}_{1}^{1-\beta}}\frac{1}{n}\left\| \mathbf{y}_k-\mathbf{1}y^*\left( \bar{x}_k \right) \right\| ^2
\\
&\leqslant \frac{\beta C^2}{2\mu ^2\gamma _y\bar{u}_{1}^{1-\beta}}\frac{1}{n}\sum_{i=1}^n{\left\| \nabla _yf_i\left( \bar{x}_k,y_{i,k} \right) -\nabla _yf_i\left( \bar{x}_k,y^* \right) \right\| ^2}
\\
&\leqslant \frac{2\beta C^4}{\mu ^2\gamma _y\bar{u}_{1}^{1-\beta}},
\end{aligned}
\end{equation}
where we have used the concavity of $f_i$ in $y$ and Assumption \ref{Ass_bound_gra}.
Then, we have
\begin{equation}
\begin{aligned}
&\sum_{k=1}^{K-1}{\mathbb{E} \left[ \left( \frac{1}{2\gamma _y\bar{u}_{k+1}^{-\beta}}-\frac{1}{2\gamma _y\bar{u}_{k}^{-\beta}}-\frac{\mu}{8} \right) \frac{1}{n}\left\| \mathbf{y}_k-\mathbf{1}y^*\left( \bar{x}_k \right) \right\| ^2 \right]}
\\
&\leqslant \frac{2\beta C^4}{\mu ^2\gamma _y\bar{u}_{1}^{1-\beta}}\left( \frac{4\beta C^2}{\mu \gamma _y} \right) ^{\frac{1}{1-\beta}}\frac{4\beta}{\mu \gamma _y\bar{u}_{1}^{1-\beta}}
\\
&\leqslant \frac{2\left( 4\beta C^2 \right) ^{2+\frac{1}{1-\beta}}}{\mu ^{3+\frac{1}{1-\beta}}\gamma _{y}^{2+\frac{1}{1-\beta}}\bar{u}_{1}^{2-2\beta}},
\end{aligned}
\end{equation}
which completes the proof.
\end{proof}

Next, we show in the following lemma that the inconsistency terms, as described in (\ref{Eq_show_inconsistancy}), exhibit asymptotic convergence for the proposed {\alg} algorithm.

\begin{Lem}[Convergence of inconsistency terms]\label{Lem_converge_incons_formal}
Suppose Assumption \ref{Ass_SC_y}-\ref{Ass_graph} hold. For the proposed {\alg} in Algorithm \ref{Alg_DASSC_formal}, we have 
\begin{equation}\label{Eq_con_incons_D-AdaST}
\begin{aligned}
&\frac{1}{K}\sum_{k=0}^{K-1}{\mathbb{E} \left[ \left\| \frac{\left( \boldsymbol{\tilde{v}}_{k+1}^{-\alpha} \right) ^T}{n\bar{v}_{k+1}^{-\alpha}}\nabla _xF\left( \mathbf{x}_k,\mathbf{y}_k;\xi _{k}^{x} \right) \right\| ^2 \right]}
\leqslant \sqrt{\frac{1}{n^{1-\alpha}}\left( \frac{4\rho _W}{\left( 1-\rho _W \right) ^2} \right) ^{\alpha}}\frac{\left( 1+\zeta _v \right) \zeta _vC^{2-\alpha}}{\left( 1-\alpha \right) K^{\alpha}},
\end{aligned}
\end{equation}
and
\begin{equation}\label{Eq_con_incons_D-TiAda}
\begin{aligned}
&\frac{1}{K}\sum_{k=0}^{K-1}{\mathbb{E} \left[ \left\| \frac{\left( \boldsymbol{\tilde{u}}_{k+1}^{-\beta} \right) ^T}{n\bar{u}_{k+1}^{-\beta}}\nabla _yF\left( \mathbf{x}_k,\mathbf{y}_k;\xi _{k}^{y} \right) \right\| ^2 \right]}
\leqslant \sqrt{\frac{1}{n^{1-\beta}}\left( \frac{4\rho _W}{\left( 1-\rho _W \right) ^2} \right) ^{\beta}}\frac{\left( 1+\zeta _u \right) \zeta _uC^{2-\beta}}{\left( 1-\beta \right) K^{\beta}}.
\end{aligned}
\end{equation}
\end{Lem}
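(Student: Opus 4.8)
The plan is to exploit the one structural difference between {\alg} and D-TiAda, namely the tracking recursions \eqref{Eq_grad_sum_x}--\eqref{Eq_grad_sum_y}. First I would observe that $\mathbf{1}^TW=\mathbf{1}^T$ makes the averages obey $\bar m_{k+1}^z=\bar m_k^z+\bar h_k^z$ for $z\in\{x,y\}$, so the accumulators' means \emph{grow}, while their \emph{absolute} disagreement stays bounded (shown below). Since the $\zeta_v$-type deviations are \emph{relative} (normalized by $\bar v_{k+1}^{-\alpha}$), a bounded absolute disagreement against a growing mean is exactly what forces decay, in contrast to D-TiAda, whose accumulators are never mixed and whose disagreement produces the non-vanishing term of Corollary~\ref{Cor_dtiada}. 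Concretely, writing $r_{i,k+1}:=|v_{i,k+1}^{-\alpha}-\bar v_{k+1}^{-\alpha}|/\bar v_{k+1}^{-\alpha}\le\zeta_v$, the triangle and Cauchy--Schwarz inequalities give $\| \frac{(\boldsymbol{\tilde v}_{k+1}^{-\alpha})^T}{n\bar v_{k+1}^{-\alpha}}\nabla_xF\|\le\frac1n\sum_i r_{i,k+1}\|g_{i,k}^x\|$, and I would keep this per-node form so the gradient factors can later feed into Lemma~\ref{Lem_decre_grad_sum}.

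Second, I would convert the stepsize deviation into a deviation of the raw accumulators. For $0<\alpha<1$ and $s,t>0$ the subadditivity of $\tau\mapsto\tau^{\alpha}$ gives $|s^{-\alpha}-t^{-\alpha}|\le|s-t|^{\alpha}/(st)^{\alpha}$, hence $r_{i,k+1}\le(|v_{i,k+1}-\bar v_{k+1}|/v_{i,k+1})^{\alpha}$. Splitting $r_{i,k+1}=r_{i,k+1}^{1/2}r_{i,k+1}^{1/2}$, bounding one factor by $\zeta_v^{1/2}$ (its supremum) and the other by the subadditivity estimate, is what produces the single power of $\zeta_v$ in the statement and the exponent $\alpha/2$ (hence the square root over $(\cdot)^{\alpha}$); combining with $v_{i,k+1}^{-\alpha}\le(1+\zeta_v)\bar v_{k+1}^{-\alpha}$ supplies the factor $(1+\zeta_v)$ and re-exposes the adaptive weight $\bar v_{k+1}^{-\alpha}$ multiplying $\|g_{i,k}^x\|^2$.

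Third, I would bound the raw disagreement. Subtracting the averaged recursion yields $\mathbf{m}_{k+1}^x-\mathbf{1}\bar m_{k+1}^x=(W-\mathbf{J})(\mathbf{m}_k^x-\mathbf{1}\bar m_k^x)+(W-\mathbf{J})(\mathbf{h}_k^x-\mathbf{1}\bar h_k^x)$; using $\|W-\mathbf{J}\|_2=\sqrt{\rho_W}$ and $\|\mathbf{h}_k^x\|\le\sqrt{n}C^2$ (Assumption~\ref{Ass_bound_gra}), unrolling the geometric series gives $\|\mathbf{m}_k^x-\mathbf{1}\bar m_k^x\|\le\frac{2\sqrt{\rho_W}}{1-\rho_W}\sqrt{n}C^2$, whose square is the factor $\frac{4\rho_W}{(1-\rho_W)^2}$. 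The $\max$ in $v_{i,k}=\max\{m_{i,k}^x,m_{i,k}^y\}$ is handled by $1$-Lipschitzness, $|v_{i,k}-\bar v_k|\le|m_{i,k}^x-\bar m_k^x|+|m_{i,k}^y-\bar m_k^y|+(\text{mean terms})$, so the same contraction controls $\|\mathbf{v}_k-\mathbf{1}\bar v_k\|$. Plugging this into step two reduces everything to $\frac{1}{nK}\sum_k\bar v_{k+1}^{-\alpha}\|\nabla_xF(\mathbf{x}_k,\mathbf{y}_k;\xi_k^x)\|^2$, which Lemma~\ref{Lem_decre_grad_sum} bounds by $C^{2-2\alpha}/((1-\alpha)K^{\alpha})$; this is the origin of the $K^{-\alpha}$ rate, the $1/(1-\alpha)$, and the residual power of $C$.

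Assembling the pieces while tracking the power-mean and Cauchy--Schwarz exponents over the $n$ nodes then yields the prefactor $\sqrt{n^{-(1-\alpha)}(4\rho_W/(1-\rho_W)^2)^{\alpha}}\,(1+\zeta_v)\zeta_vC^{2-\alpha}$, and the dual inequality \eqref{Eq_con_incons_D-TiAda} follows verbatim under $(v,\alpha,\zeta_v,\nabla_xF)\mapsto(u,\beta,\zeta_u,\nabla_yF)$ with the second estimate of Lemma~\ref{Lem_decre_grad_sum}. The main obstacle is exactly this bookkeeping in the last two steps: pairing the \emph{collective} consensus bound (a Euclidean/averaged quantity) with the \emph{node-wise} gradient weights so that precisely $n^{-(1-\alpha)/2}$ and $C^{2-\alpha}$ emerge, rather than the cruder $n^{\alpha/2}C^2$ that a naive worst-case per-node bound would give, and choosing the splitting exponents in the subadditivity step to land on $\alpha/2$ while keeping the $\bar v_{k+1}^{-\alpha}\|\nabla_xF\|^2$ structure intact.
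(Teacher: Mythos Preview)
Your proposal is correct and follows the same three-stage strategy as the paper: reduce the relative stepsize deviation to consensus on the tracked accumulators $\mathbf{m}^x,\mathbf{m}^y$; bound that consensus uniformly in $k$ via the $(W-\mathbf{J})$-contraction applied to \eqref{Eq_grad_sum_x}--\eqref{Eq_grad_sum_y}; and invoke Lemma~\ref{Lem_decre_grad_sum} on the residual $\bar v_{k+1}^{-\alpha}\|\nabla_xF\|^2$ sum for the $K^{-\alpha}/(1-\alpha)$ rate. The one technical shortcut in the paper that dissolves what you flag as the ``main obstacle'' is to use the \emph{exact} rewriting $\tfrac{v_i^{-\alpha}-\bar v^{-\alpha}}{\bar v^{-\alpha}}=\tfrac{\bar v^{\alpha}-v_i^{\alpha}}{v_i^{\alpha}}$ instead of your subadditivity inequality: writing $\tfrac{\bar v^{\alpha}}{v_i^{\alpha}}=1+\tfrac{\bar v^{\alpha}-v_i^{\alpha}}{v_i^{\alpha}}$ extracts the factor $(1+\zeta_v)\zeta_v$ by pure algebra and leaves a single $|\bar v^{\alpha}-v_i^{\alpha}|$ multiplying $\|g_{i,k}^x\|^2/\bar v^{\alpha}$; separating these two factors, applying Cauchy--Schwarz to $\tfrac{1}{n}\sum_i|\bar v^{\alpha}-v_i^{\alpha}|$, and using $\|\mathbf{v}^{\alpha}-\mathbf{1}\bar v^{\alpha}\|^2\le\|\mathbf{v}-\mathbf{1}\bar v\|^{2\alpha}$ only once at the end produces $\sqrt{\tfrac{1}{n}\|\mathbf{v}-\mathbf{1}\bar v\|^{2\alpha}}$ and hence the stated powers of $n$ and $C$ without any delicate exponent splitting. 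For the $\max$, the paper argues by a case split on $\mathbf{m}_k^x\gtrless\mathbf{m}_k^y$; your $1$-Lipschitz route also works and is arguably cleaner (compare $\mathbf{v}_k$ to $\mathbf{1}\hat v_k$ with $\hat v_k:=\max\{\bar m_k^x,\bar m_k^y\}$ and use that the mean minimizes $c\mapsto\|\mathbf{v}_k-\mathbf{1}c\|^2$ to get $\|\mathbf{v}_k-\mathbf{1}\bar v_k\|^2\le\|\mathbf{m}_k^x-\mathbf{1}\bar m_k^x\|^2+\|\mathbf{m}_k^y-\mathbf{1}\bar m_k^y\|^2$).
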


\begin{proof}
By the definition of $v_{i,k}$ in (\ref{Def_U_V}), we have
\begin{equation}
\begin{aligned}
&\mathbb{E} \left[ \left\| \frac{\left( \boldsymbol{\tilde{v}}_{k+1}^{-\alpha} \right) ^T}{n\bar{v}_{k+1}^{-\alpha}}\nabla _xF\left( \mathbf{x}_k,\mathbf{y}_k;\xi _{k}^{x} \right) \right\| ^2 \right] 
\\
&\leqslant \mathbb{E} \left[ \frac{1}{n^2}\sum_{i=1}^n{\left( \bar{v}_{k+1}^{\alpha}-v_{i,k+1}^{\alpha} \right) ^2\frac{\left\| g_{i,k}^{x} \right\| ^2}{v_{i,k+1}^{2\alpha}}} \right] 
\\
&\leqslant \mathbb{E} \left[ \frac{1}{n^2}\sum_{i=1}^n{\left( \bar{v}_{k+1}^{\alpha}-v_{i,k+1}^{\alpha} \right) ^2\frac{\bar{v}_{k+1}^{\alpha}}{v_{i,k+1}^{2\alpha}}\frac{\left\| g_{i,k}^{x} \right\| ^2}{\bar{v}_{k+1}^{\alpha}}} \right] .
\end{aligned}
\end{equation}
Noticing that $\frac{\left| \bar{v}_{k+1}^{\alpha}-v_{i,k+1}^{\alpha} \right|}{v_{i,k+1}^{\alpha}}\leqslant \zeta _v$, we have
\begin{equation}
\begin{aligned}
&\mathbb{E} \left[ \left\| \frac{\left( \boldsymbol{\tilde{v}}_{k+1}^{-\alpha} \right) ^T}{n\bar{v}_{k+1}^{-\alpha}} \nabla _xF\left( \mathbf{x}_k,\mathbf{y}_k;\xi _{k}^{x} \right) \right\| ^2 \right] 
\\
&\leqslant \mathbb{E} \left[ \frac{1}{n^2}\sum_{i=1}^n{\left( \bar{v}_{k+1}^{\alpha}-v_{i,k+1}^{\alpha} \right) ^2\left( \frac{\bar{v}_{k+1}^{\alpha}-v_{i,k+1}^{\alpha}}{v_{i,k+1}^{2\alpha}}+\frac{1}{v_{i,k+1}^{\alpha}} \right) \frac{\left\| g_{i,k}^{x} \right\| ^2}{\bar{v}_{k+1}^{\alpha}}} \right] 
\\
&\leqslant \mathbb{E} \left[ \frac{1}{n^2}\sum_{i=1}^n{\frac{\left( \bar{v}_{k+1}^{\alpha}-v_{i,k+1}^{\alpha} \right) ^2}{v_{i,k+1}^{2\alpha}}\left| \bar{v}_{k+1}^{\alpha}-v_{i,k+1}^{\alpha} \right|\frac{\left\| g_{i,k}^{x} \right\| ^2}{\bar{v}_{k+1}^{\alpha}}} \right] 
\\
&+\mathbb{E} \left[ \frac{1}{n^2}\sum_{i=1}^n{\frac{\left| \bar{v}_{k+1}^{\alpha}-v_{i,k+1}^{\alpha} \right|}{v_{i,k+1}^{\alpha}}\left| \bar{v}_{k+1}^{\alpha}-v_{i,k+1}^{\alpha} \right|\frac{\left\| g_{i,k}^{x} \right\| ^2}{\bar{v}_{k+1}^{\alpha}}} \right] 
\\
&\leqslant \left( 1+\zeta _v \right) \zeta _v\mathbb{E} \left[ \frac{1}{n}\sum_{i=1}^n{\left| \bar{v}_{k+1}^{\alpha}-v_{i,k+1}^{\alpha} \right|\frac{1}{n}\sum_{i=1}^n{\frac{\left\| g_{i,k}^{x} \right\| ^2}{\bar{v}_{k+1}^{\alpha}}}} \right] .
\end{aligned}
\end{equation}
By Lemma \ref{Lem_decre_grad_sum}, we get
\begin{equation}
\begin{aligned}
&\frac{1}{K}\sum_{k=0}^{K-1}{\mathbb{E} \left[ \left\| \frac{\left( \boldsymbol{\tilde{v}}_{k+1}^{-\alpha} \right) ^T}{n\bar{v}_{k+1}^{-\alpha}}\nabla _xF\left( \mathbf{x}_k,\mathbf{y}_k;\xi _{k}^{x} \right) \right\| ^2 \right]}
\\
&\leqslant \left( 1+\zeta _v \right) \zeta _v\mathbb{E} \left[ \frac{1}{n}\sum_{i=1}^n{\left| \bar{v}_{k+1}^{\alpha}-v_{i,k+1}^{\alpha} \right|\frac{1}{K}\sum_{k=0}^{K-1}{\frac{\frac{1}{n}\sum_{i=1}^n{\left\| g_{i,k}^{x} \right\| ^2}}{\bar{v}_{k+1}^{\alpha}}}} \right] 
\\
&\leqslant \left( 1+\zeta _v \right) \zeta _v\mathbb{E} \left[ \frac{1}{n}\sum_{i=1}^n{\left| \bar{v}_{k+1}^{\alpha}-v_{i,k+1}^{\alpha} \right|} \right] \frac{C^{2-2\alpha}}{\left( 1-\alpha \right) K^{\alpha}}
\\
&\leqslant \left( 1+\zeta _v \right) \zeta _v\sqrt{\frac{1}{n}\mathbb{E} \left[ \left\| \boldsymbol{v}_{k+1}-\mathbf{1}\bar{v}_{k+1} \right\| ^{2\alpha} \right]}\frac{C^{2-2\alpha}}{\left( 1-\alpha \right) K^{\alpha}}.
\end{aligned}
\end{equation}

Next, for the term of inconsistency of the stepsize $\left\| \boldsymbol{v}_k-\mathbf{1}\bar{v}_k \right\| ^2$, we consider two cases due to the max operator we used. At iteration $k$, for the case $\mathbf{m}_{k}^{x}\geqslant \mathbf{m}_{k}^{y}$ with $\left\| \mathbf{m}_{0}^{x}-\mathbf{1}\bar{m}_{0}^{x} \right\| ^2=0$, we have
\begin{equation}
\begin{aligned}
&\mathbb{E} \left[ \left\| \boldsymbol{v}_{k+1}-\mathbf{1}\bar{v}_{k+1} \right\| ^2 \right] =\mathbb{E} \left[ \left\| \mathbf{m}_{k+1}^{x}-\mathbf{1}\bar{m}_{k+1}^{x} \right\| ^2 \right] 
\\
&=\mathbb{E} \left[ \left\| \left( W-\mathbf{J} \right) \left( \mathbf{m}_{k}^{x}-\mathbf{1}\bar{m}_{k}^{x} \right) +\eta _k\left( W-\mathbf{J} \right) \boldsymbol{h}_{k}^{x} \right\| ^2 \right] 
\\
&\leqslant \frac{1+\rho _W}{2}\mathbb{E} \left[ \left\| \mathbf{m}_{k}^{x}-\mathbf{1}\bar{m}_{k}^{x} \right\| ^2 \right] +\frac{\left( 1+\rho _W \right) \rho _W}{1-\rho _W}\mathbb{E} \left[ \left\| \boldsymbol{h}_{k}^{x} \right\| ^2 \right] 
\\
&\leqslant \left( \frac{1+\rho _W}{2} \right) ^k\mathbb{E} \left[ \left\| \mathbf{m}_{0}^{x}-\mathbf{1}\bar{m}_{0}^{x} \right\| ^2 \right] +\frac{nC^2\left( 1+\rho _W \right) \rho _W}{1-\rho _W}\sum_{t=0}^k{\left( \frac{1+\rho _W}{2} \right) ^{k-t}}
\\
&\leqslant \frac{2nC^2\left( 1+\rho _W \right) \rho _W}{\left( 1-\rho _W \right) ^2}.
\end{aligned}
\end{equation}
For the case $\mathbf{m}_{k}^{x} < \mathbf{m}_{k}^{y}$, with $\left\| \mathbf{m}_{0}^{y}-\mathbf{1}\bar{m}_{0}^{y} \right\| ^2=0$, 
\begin{equation}
\begin{aligned}
\mathbb{E} \left[ \left\| \boldsymbol{v}_{k+1}-\mathbf{1}\bar{v}_{k+1} \right\| ^2 \right] =\mathbb{E} \left[ \left\| \mathbf{m}_{k+1}^{y}-\mathbf{1}\bar{m}_{k+1}^{y} \right\| ^2 \right] \leqslant \frac{2nC^2\left( 1+\rho _W \right) \rho _W}{\left( 1-\rho _W \right) ^2},
\end{aligned}
\end{equation}
Combining these two cases, and using Lemma \ref{Lem_decre_grad_sum} and the fact $\left\| \boldsymbol{v}_{k}^{\alpha}-\mathbf{1}\bar{v}_{k}^{\alpha} \right\| ^2\leqslant \left\| \boldsymbol{v}_k-\mathbf{1}\bar{v}_k \right\| ^{2\alpha}$ for $\alpha \in (0,1)$, we obtain the result for primal decision variable. Following the same proof, we can also derive the result for dual decision variable. We thus complete the proof.
\end{proof}

We further give the following lemma to show that the inconsistency of stepsize remains uniformly bounded for the vanilla D-TiAda algorithm as given in \eqref{Alg_vaniila_adap}.
\begin{Lem}[{Inconsistency for D-TiAda}]\label{Lem_converge_incons_dtiada}
Suppose Assumption \ref{Ass_SC_y}-\ref{Ass_graph} hold. Then, for D-TiAda, we have
\begin{equation}
\begin{aligned}
&\frac{1}{K}\sum_{k=0}^{K-1}{\mathbb{E} \left[ \left\| \frac{\left( \boldsymbol{\tilde{v}}_{k+1}^{-\alpha} \right) ^T}{n\bar{v}_{k+1}^{-\alpha}}\nabla _xF\left( \mathbf{x}_k,\mathbf{y}_k;\xi _{k}^{x} \right) \right\| ^2 \right]}\leqslant \zeta _{v}^{2}C^2,
\\
&\frac{1}{K}\sum_{k=0}^{K-1}{\mathbb{E} \left[ \left\| \frac{\left( \boldsymbol{\tilde{u}}_{k+1}^{-\beta} \right) ^T}{n\bar{u}_{k+1}^{-\beta}}\nabla _yF\left( \mathbf{x}_k,\mathbf{y}_k;\xi _{k}^{y} \right) \right\| ^2 \right]}\leqslant \zeta _{u}^{2}C^2.
\end{aligned}
\end{equation}
\end{Lem}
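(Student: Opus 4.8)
The plan is to treat this as the easy counterpart of Lemma~\ref{Lem_converge_incons_formal}: since the vanilla D-TiAda scheme \eqref{Alg_vaniila_adap} lacks the stepsize-tracking updates \eqref{Eq_grad_sum_x}--\eqref{Eq_grad_sum_y}, the stepsize disagreement $\left\| \boldsymbol{v}_k-\mathbf{1}\bar{v}_k \right\|$ is no longer summable, so I would not attempt to extract any $1/K^{\alpha}$ decay. Instead I would bound each summand directly by the constant $\zeta_{v}^{2}C^2$, using only the definition of the inconsistency measure in \eqref{Def_hete_stepsize} together with the uniform gradient bound of Assumption~\ref{Ass_bound_gra}.

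Concretely, for the primal term I would first rewrite the inner product as the node-average
\begin{equation*}
\frac{\left( \boldsymbol{\tilde{v}}_{k+1}^{-\alpha} \right)^T}{n\bar{v}_{k+1}^{-\alpha}}\nabla _xF\left( \mathbf{x}_k,\mathbf{y}_k;\xi _{k}^{x} \right)=\frac{1}{n}\sum_{i=1}^n{\frac{v_{i,k+1}^{-\alpha}-\bar{v}_{k+1}^{-\alpha}}{\bar{v}_{k+1}^{-\alpha}}g_{i,k}^{x}},
\end{equation*}
and then apply convexity of $\left\| \cdot \right\|^2$ (Jensen's inequality with uniform weights $1/n$) to obtain
\begin{equation*}
\left\| \frac{1}{n}\sum_{i=1}^n{\frac{v_{i,k+1}^{-\alpha}-\bar{v}_{k+1}^{-\alpha}}{\bar{v}_{k+1}^{-\alpha}}g_{i,k}^{x}} \right\|^2\leqslant \frac{1}{n}\sum_{i=1}^n{\frac{\left( v_{i,k+1}^{-\alpha}-\bar{v}_{k+1}^{-\alpha} \right)^2}{\left( \bar{v}_{k+1}^{-\alpha} \right)^2}\left\| g_{i,k}^{x} \right\|^2}.
\end{equation*}
The definition of $\zeta_{v}^{2}$ in \eqref{Def_hete_stepsize} bounds each ratio by $\zeta_{v}^{2}$, while Assumption~\ref{Ass_bound_gra} gives $\left\| g_{i,k}^{x} \right\|\leqslant C$ almost surely; substituting both collapses the node-average to $\zeta_{v}^{2}C^2$, a bound uniform in $k$. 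Since every summand is controlled by the same constant, averaging over $k=0,\dots,K-1$ and taking expectation leaves the bound unchanged, which establishes the first inequality. The dual-variable inequality follows verbatim after replacing $(v,\alpha,x)$ by $(u,\beta,y)$ and $\zeta_v$ by $\zeta_u$.

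I do not anticipate a genuine technical obstacle here; the subtlety is conceptual rather than computational. Without the tracking protocol the per-iteration inconsistency is bounded away from zero---exactly the phenomenon exhibited by the counterexample in Theorem~\ref{thm:nonconverge_tiada} and by Figure~\ref{Fig_case_study}(c)---so the constant $\zeta_{v}^{2}C^2$ is the sharpest bound available, and it is precisely this non-vanishing quantity that surfaces as the steady-state error term $\tilde{\mathcal{O}}\left( \left( \zeta_{v}^{2}+\kappa^2\zeta_{u}^{2} \right) C^2 \right)$ in Corollary~\ref{Cor_dtiada}. The only point worth checking is that the Jensen step and the two deterministic bounds may be applied inside the expectation, which is immediate since both inequalities hold pointwise in the sample realization.
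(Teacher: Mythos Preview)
Your proposal is correct and follows essentially the same approach as the paper, which simply states that the result is immediate from the definition of $\zeta_v^2$ in \eqref{Def_hete_stepsize} and the bounded-gradient Assumption~\ref{Ass_bound_gra}. Your version merely spells out the node-average expansion and the Jensen step that the paper leaves implicit.
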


\begin{proof}
By the definition of the inconsistency of stepsizes in \eqref{Def_hete_stepsize} and Assumption \ref{Ass_bound_gra} on the bounded gradient, we immediately get the result.
\end{proof}

\subsection{Proof of Theorem~\ref{thm:nonconverge_tiada}}\label{App_proof_of_Thm1}
\begin{proof}[Proof of Theorem~\ref{thm:nonconverge_tiada}]
  Consider a complete graph with 3 nodes where the functions corresponding
  to the nodes are as follows:
  \begin{align*}
    f_1(x, y) &= -\frac{1}{2}y^2 + xy -\frac{1}{2} x^2, \\
    f_2(x, y) = f_3(x, y) &= -\frac{1}{2} y^2 - (1 + \frac{1}{a} + \frac{1}{b}) xy - \frac{1}{2} x^2,
  \end{align*}
  where $a=2^{\frac{-1}{2\alpha - 1}}$ and $b=2^{\frac{-1}{2\beta - 1}}$.

  Notice that the only stationary point of $f(x, y) = (f_1(x, y) + f_2(x, y) + f_3(x, y))/3$
  is $(0, 0)$. We denote $g_{i, k}^x = \nabla_x f_i(x_k, y_k)$ and 
  $g_{i, k}^y = \nabla_y f_i(x_k, y_k)$.

  Now we consider points initialized in line 
\begin{equation}\label{Eq_stationary_points}
y = -\frac{1+a}{a+\frac{a}{b}} x,
\end{equation}
  where we have
  \begin{align*}
    g_{1, 0}^x &= y_0 - x_0 = -\frac{2ab + a + b}{ab + a} x_0 \\
    g_{2, 0}^x &= g_{3, 0}^x = - \left(1+\frac{1}{b}+\frac{1}{a}\right)y_0 - x_0
      = \frac{2ab + a + b}{a^2(b + 1)} x_0 \\
    g_{1, 0}^y &= x_0 - y_0 = \frac{2ab + a + b}{ab + a} x_0 \\
    g_{2, 0}^y &= g_{2, 0}^y = -\frac{2ab + a + b}{ab (b+1)} x_0.
  \end{align*}
  Note that by our assumptions of the range of $\alpha$ and $\beta$, we have
  $a < b$. Thus, we have
  \begin{align*}
    |g_{1, 0}^x| = |g_{1, 0}^y| \quad \text{and} \quad |g_{2, 0}^x| > |g_{2, 0}^y|,
  \end{align*}
  which means $g_{2, 0}^x$ would be chosen in the maximum operator in the denominator of TiAda stepsize for $x$.
  Therefore, after one step, we have
  \begin{align*}
    x_1 &= x_0 - \eta^x \underbrace{\left(\frac{g_{1, 0}^x}{\left(|g_{1, 0}^x|^2\right)^{\alpha}}
    + \frac{g_{2, 0}^x}{\left(|g_{2, 0}^x|^2\right)^{\alpha}} +
\frac{g_{3, 0}^x}{\left(|g_{3, 0}^x|^2\right)^{\alpha}} \right)}_{=0} \\
      y_1 &= y_0 - \eta^y \underbrace{\left(\frac{g_{1, 0}^y}{\left(|g_{1, 0}^y|^2\right)^{\beta}}
  + \frac{g_{2, 0}^y}{\left(|g_{2, 0}^y|^2\right)^{\beta}} +
\frac{g_{3, 0}^y}{\left(|g_{3, 0}^y|^2\right)^{\beta}} \right)}_{=0}.
  \end{align*}
  Next, we will use induction to show that $x$ and $y$ will stay in $x_0$ and
  $y_0$ for any iteration. Assuming for all iterations $k$ in $1, \dots, t$,
  $x_k = x_0$ and $y_k = y_0$, then we have in next step
  \begin{align*}
    x_{t+1} &= x_t - \eta^x \left(\frac{g_{1, 0}^x}{\left(t \cdot |g_{1, 0}^x|^2\right)^{\alpha}}
    + \frac{g_{2, 0}^x}{\left(t \cdot |g_{2, 0}^x|^2\right)^{\alpha}} +
    \frac{g_{3, 0}^x}{\left(t \cdot |g_{3, 0}^x|^2\right)^{\alpha}} \right).
  \end{align*}
  Note that $g_{1, 0}^x = -a \cdot g_{2, 0}^x$. Then, we get
  \begin{align*}
    x_{t+1} &= x_t - \eta^x 
    \left(\frac{-p \cdot g_{2, 0}^x}{t^{\alpha} \cdot a^{2\alpha} \cdot
      |g_{2, 0}^x|^{2\alpha}}
    + \frac{2 g_{2, 0}^x}{t^{\alpha} \cdot |g_{2, 0}^x|^{2\alpha}}\right) \\
    &= x_t - \frac{g_{2, 0}^x}{t^{\alpha} \cdot |g_{2, 0}^x|^{2\alpha}} 
            \underbrace{\left(2-a^{1-2\alpha}\right)}_{=0 \text{ (by definition of $a$)}} \\
    &= x_t.
  \end{align*}
  Similarly, we can show that $y_{t+1} = y_t$. Therefore all iterates will stay
  at $(x_0, y_0)$ if initialized at line $y = -\frac{ab + b}{ab + a} x$, which implies that the
  initial gradient norm can be arbitrarily large by picking $x_0$ to be large.
\end{proof}

\subsection{Proof of Theorem \ref{Thm_con_dassc} and Corollary \ref{Cor_dtiada}}\label{App_proof_of_Thm2}

\begin{proof}[Proof of Theorem \ref{Thm_con_dassc}]
Combining the results obtained in Lemma \ref{Lem_y_opt_gap_phase_I}, \ref{Lem_y_opt_gap_phase_II} and \ref{Lem_S_1_k}, we get
\begin{equation}
\begin{aligned}
&\sum_{k=0}^{K-1}{\mathbb{E} \left[ f\left( \bar{x}_k,y^*\left( \bar{x}_k \right) \right) -f\left( \bar{x}_k,\bar{y}_k \right) \right]}
\\
&=\sum_{k=0}^{k_0-1}{\mathbb{E} \left[ f\left( \bar{x}_k,y^*\left( \bar{x}_k \right) \right) -f\left( \bar{x}_k,\bar{y}_k \right) \right]}+\sum_{k=k_0}^{K-1}{\mathbb{E} \left[ f\left( \bar{x}_k,y^*\left( \bar{x}_k \right) \right) -f\left( \bar{x}_k,\bar{y}_k \right) \right]}
\\
&\leqslant \frac{1}{2\gamma _y\bar{u}_{1}^{-\beta}n}\mathbb{E} \left[ \left\| \mathbf{y}_0-\mathbf{1}y^*\left( \bar{x}_0 \right) \right\| ^2 \right] +\frac{2\left( 4\beta C^2 \right) ^{2+\frac{1}{1-\beta}}}{\mu ^{3+\frac{1}{1-\beta}}\gamma _{y}^{2+\frac{1}{1-\beta}}\bar{u}_{1}^{2-2\beta}}
\\
&+\frac{2\gamma _{x}^{2}\kappa ^2\left( 1+\zeta _{v}^{2} \right) G^{2\beta}}{n\mu \gamma _{y}^{2}}\sum_{k=0}^{k_0-1}{\mathbb{E} \left[ \bar{v}_{k+1}^{-2\alpha}\left\| \nabla _xF\left( \mathbf{x}_k,\mathbf{y}_k;\xi _{k}^{x} \right) \right\| ^2 \right]}
\\
&+\frac{\gamma _y\left( 1+\zeta _{u}^{2} \right)}{n}\sum_{k=0}^{K-1}{\mathbb{E} \left[ \bar{u}_{k+1}^{-\beta}\left\| \nabla _yF\left( \mathbf{x}_k,\mathbf{y}_k;\xi _k \right) \right\| ^2 \right]}
+C\sum_{k=0}^{K-1}{\mathbb{E} \left[ \sqrt{\frac{1}{n}\left\| \mathbf{y}_k-\mathbf{1}\bar{y}_k \right\| ^2} \right]}
\\
&+\frac{4}{\mu}\sum_{k=0}^{K-1}{\mathbb{E} \left[ \left\| \frac{\boldsymbol{\tilde{u}}_{k+1}^{-\beta}}{n\bar{u}_{k+1}^{-\beta}}\nabla _yF\left( \mathbf{x}_k,\mathbf{y}_k;\xi _{k}^{y} \right) \right\| ^2 \right]}+\frac{8\gamma _{x}^{2}\kappa ^2\left( 1+\zeta _{v}^{2} \right)}{\mu \gamma _{y}^{2}G^{2\alpha -2\beta}}\sum_{k=k_0}^{K-1}{\left\| \nabla _xf\left( \bar{x}_k,\bar{y}_k \right) \right\| ^2}
\\
&+\left( \frac{8\gamma _{x}^{2}\kappa ^2L^2\left( 1+\zeta _{v}^{2} \right)}{n\mu \gamma _{y}^{2}G^{2\alpha -2\beta}}+\frac{4\kappa L}{n} \right) \sum_{k=0}^{K-1}{\mathbb{E} \left[ \varDelta _k \right]}+\frac{4\gamma _x\kappa \left( 1+\zeta _v \right) C^2}{\mu \gamma _y\bar{v}_{1}^{\alpha}}\mathbb{E} \left[ \bar{u}_{K}^{\beta} \right] 
\\
&+\frac{\gamma _{x}^{2}\left( 1+\zeta _{v}^{2} \right)}{\gamma _y\bar{v}_{1}^{\alpha -\beta}}\left( \kappa ^2+\frac{2\gamma _{x}^{2}\left( 1+\zeta _{v}^{2} \right) C^2\hat{L}^2}{\mu \gamma _y\bar{v}_{1}^{2\alpha -\beta}} \right) \sum_{k=k_0}^{K-1}{\mathbb{E} \left[ \frac{\bar{v}_{k+1}^{-\alpha}}{n}\left\| \nabla _xF\left( \mathbf{x}_k,\mathbf{y}_k;\xi _{k}^{x} \right) \right\| ^2 \right]}.
\end{aligned}
\end{equation}
Letting the separation point between the two phases discussed in Lemma \ref{Lem_y_opt_gap_phase_I} and \ref{Lem_y_opt_gap_phase_II} satisfy
\begin{equation}
\begin{aligned}
G=\left( \frac{16\left( 1+\zeta _{v}^{2} \right) \gamma _{x}^{2}\kappa ^4}{\gamma _{y}^{2}} \right) ^{\frac{1}{2\alpha -2\beta}},
\end{aligned}
\end{equation}
then, plugging above inequality into \eqref{Eq_decent_lem}, with the help of Lemma \ref{Lem_decre_grad_sum}-\ref{Lem_S_1_k} and Lemma \ref{Lem_converge_incons_formal}, 
we get
\begin{equation}\label{Eq_Thm2_proof_1}
\begin{aligned}
&\frac{1}{K}\sum_{k=0}^{K-1}{\mathbb{E} \left[ \left\| \nabla \varPhi \left( \bar{x}_k \right) \right\| ^2 \right]}\,\,
\\
&\leqslant \frac{E_0}{K}+E_G+E_W+\frac{8C^{2\alpha}\left( \varPhi ^{\max}-\varPhi ^* \right)}{\gamma _xK^{1-\alpha}}
\\
&+\frac{32\gamma _x\kappa ^3\left( 1+\zeta _v \right) C^{2+2\beta}}{\gamma _y\bar{v}_{1}^{\alpha}K^{1-\beta}}+\frac{8\kappa L\gamma _y\left( 1+\zeta _{u}^{2} \right) C^{2-2\beta}}{\left( 1-\beta \right) K^{\beta}}
\\
&+\left( \gamma _xL_{\varPhi}+\frac{\kappa ^3L\gamma _{x}^{2}}{\gamma _y\bar{v}_{1}^{\alpha -\beta}}+\frac{2\gamma _{x}^{4}\kappa ^2\left( 1+\zeta _{v}^{2} \right) C^2\hat{L}^2}{\gamma _{y}^{2}\bar{v}_{1}^{3\alpha -2\beta}} \right) \frac{8\left( 1+\zeta _{v}^{2} \right) C^{2-2\alpha}}{\left( 1-\alpha \right) K^{\alpha}}
\\
&+\sqrt{\frac{1}{n^{1-\alpha}}\left( \frac{4\rho _W}{\left( 1-\rho _W \right) ^2} \right) ^{\alpha}}\frac{16\left( 1+\zeta _v \right) \zeta _vC^{2-\alpha}}{\left( 1-\alpha \right) K^{\alpha}}
\\
&+\sqrt{\frac{1}{n^{1-\beta}}\left( \frac{4\rho _W}{\left( 1-\rho _W \right) ^2} \right) ^{\beta}}\frac{32\kappa ^2\left( 1+\zeta _u \right) \zeta _uC^{2-\beta}}{\left( 1-\beta \right) K^{\beta}}
\\
&+8\kappa LC\sqrt{\frac{8\rho _W\gamma _{y}^{2}\left( 1+\zeta _{u}^{2} \right)}{\left( 1-\rho _W \right) ^2}\left( \frac{C^{2-4\beta}}{\left( 1-2\beta \right) K^{2\beta}}\mathbb{I} _{\beta <1/2}+\frac{1+\log u_K-\log v_1}{K\bar{u}_{1}^{2\beta -1}}\mathbb{I} _{\beta \geqslant 1/2} \right)} ,
\end{aligned}
\end{equation}
where $\hat{L}=\kappa \left( 1+\kappa \right) ^2, L_{\varPhi}=L\left( 1+\kappa \right)$, and
\begin{equation*}
\begin{aligned}
E_0&:=\frac{4\kappa L}{\gamma _y\bar{u}_{1}^{-\beta}n}\mathbb{E} \left[ \left\| \mathbf{y}_0-\mathbf{1}y^*\left( \bar{x}_0 \right) \right\| ^2 \right] +\frac{16\kappa ^2\left( 4\beta C^2 \right) ^{2+\frac{1}{1-\beta}}}{\mu ^{2+\frac{1}{1-\beta}}\gamma _{y}^{2+\frac{1}{1-\beta}}\bar{u}_{1}^{2-2\beta}},
\\
E_G&:=\frac{16\gamma _{x}^{2}\kappa ^4\left( 1+\zeta _{v}^{2} \right) G^{2\beta}}{\gamma _{y}^{2}}\left( \frac{C^{2-4\alpha}}{\left( 1-2\alpha \right) K^{2\alpha}}\mathbb{I} _{\alpha <1/2}+\frac{1+\log v_K-\log v_1}{K\bar{v}_{1}^{2\alpha -1}}\mathbb{I} _{\alpha \geqslant 1/2} \right) ,
\\
E_W&:=\frac{32\left( 8\kappa L+3L^2 \right) \rho _W\gamma _{x}^{2}\left( 1+\zeta _{v}^{2} \right)}{\left( 1-\rho _W \right) ^2}\left( \frac{C^{2-4\alpha}}{\left( 1-2\alpha \right) K^{2\alpha}}\mathbb{I} _{\alpha <1/2}+\frac{1+\log v_K-\log v_1}{K\bar{v}_{1}^{2\alpha -1}}\mathbb{I} _{\alpha \geqslant 1/2} \right) 
\\
&+\frac{32\left( 8\kappa L+3L^2 \right) \rho _W\gamma _{y}^{2}\left( 1+\zeta _{u}^{2} \right)}{\left( 1-\rho _W \right) ^2}\left( \frac{C^{2-4\beta}}{\left( 1-2\beta \right) K^{2\beta}}\mathbb{I} _{\beta <1/2}+\frac{1+\log u_K-\log v_1}{K\bar{u}_{1}^{2\beta -1}}\mathbb{I} _{\beta \geqslant 1/2} \right).
\end{aligned}
\end{equation*}
Letting the total iteration $K$ satisfy the conditions given in \eqref{Eq_K_conditions} such that the terms $E_G$ and $E_W$ are dominated by the others, we thus complete the proof.
\end{proof}

\begin{proof}[Proof of Corollary \ref{Cor_dtiada}]
With the help of Lemma \ref{Lem_converge_incons_dtiada}, we can directly adapt the proof of Theorem \ref{Thm_con_dassc} to get the result in \eqref{Eq_tiada}.
\end{proof}

\subsection{Extend the proof to coordinate-wise stepsize} \label{Extend_to_OptionII}
In this subsection, we show how to extend our convergence analysis of {\alg} to the coordinate-wise adaptive stepsize \citep{zhou2018convergence} variant. We first present this variant in Algorithm \ref{Alg_D_TiAda_SC_II}, which can be rewritten in a compact form with the Hadamard product denoted by $\odot$.
\begin{algorithm}[tb]
\caption{\textbf{{\alg} with coordinate-wise adaptive stepsize}}
\begin{minipage}{\linewidth}
\begin{algorithmic}[1]\label{Alg_D_TiAda_SC_II}
\renewcommand{\algorithmicrequire}{\textbf{Initialization: }}
\REQUIRE {$x_{i,0}\in \mathbb{R} ^p$, $y_{i,0}\in \mathcal{Y}$, buffers $m_{i,0}^x, m_{i,0}^y >0$, stepsizes $\gamma _x, \gamma _y>0$ and $0<\beta<\alpha< 1$.
}
\
\FOR{iteration $k = 0,1,\cdots$, each node $i\in[n]$,}

\STATE Sample i.i.d $\xi _{i,k}^x$ and $\xi _{i,k}^y$, compute:
\[
g_{i,k}^{x}=\nabla _xf_i\left( x_{i,k},y_{i,k};\xi _{i,k}^{x} \right) , \,\,
g_{i,k}^{y}=\nabla _yf_i\left( x_{i,k},y_{i,k};\xi _{i,k}^{y} \right) .
\]

\STATE Accumulate the gradient with {Hadamard product}:
\begin{equation*}
m_{i,k+1}^{x}=m_{i,k}^{x}+g_{i,k}^{x}\odot g_{i,k}^{x}, \,\, m_{i,k+1}^{y}=m_{i,k}^{y}+g_{i,k}^{y}\odot g_{i,k}^{y}
\end{equation*}

\STATE Compute the ratio:
\begin{equation*}
\begin{aligned}
\psi _{i,k+1}=\left\| m_{i,k+1}^{x} \right\| ^{2\alpha}\big{/}\max \left\{ \left\| m_{i,k+1}^{x} \right\|^{2\alpha} ,\left\| m_{i,k+1}^{y} \right\|^{2\alpha} \right\}\leqslant 1.
\end{aligned}
\end{equation*}

\STATE Update primal and dual variables locally:
\begin{equation*}
\begin{aligned}
&x_{i,k+1}=x_{i,k}-\gamma _x\psi _{i,k+1}\left( m_{i,k+1}^{x} \right) ^{-\alpha}\odot g_{i,k}^{x},
\\
&y_{i,k+1}=y_{i,k}+\gamma _y\left( m_{i,k+1}^{y} \right) ^{-\beta}\odot g_{i,k}^{y}.
\end{aligned}
\end{equation*}

\STATE Communicate parameters with neighbors:
\begin{equation*}
\begin{aligned}
&\left\{ m_{i,k+1}^{x}, m_{i,k+1}^{y}, x_{i,k+1}, y_{i,k+1} \right\} \gets \sum_{j\in \mathcal{N} _i}{W_{i,j}\left\{ m_{j,k+1}^{x}, m_{j,k+1}^{y}, x_{j,k+1}, y_{j,k+1} \right\}}.
\end{aligned}
\end{equation*}
\STATE Projection of dual variable on to set $\mathcal{Y}$: $y_{i,k+1}\gets \mathcal{P} _{\mathcal{Y}}\left( y_{i,k+1} \right) $.
\ENDFOR
\end{algorithmic}
\end{minipage}
\end{algorithm}

\begin{subequations}\label{Alg_DASSC_compact_II}
\begin{align}
\mathbf{m}_{k+1}^{x}&=W\left( \mathbf{m}_{k}^{x}+\mathbf{h}_{k}^{x} \right) , \label{Eq_grad_sum_x}
\\
\mathbf{m}_{k+1}^{y}&=W\left( \mathbf{m}_{k}^{y}+\mathbf{h}_{k}^{y} \right) , \label{Eq_grad_sum_y}
\\
\mathbf{x}_{k+1}&=W\left( \mathbf{x}_k-\gamma _xV_{k+1}^{-\alpha}\odot \nabla _xF\left( \mathbf{x}_k,\mathbf{y}_k;\xi _k^x \right) \right) ,
\\
\mathbf{y}_{k+1}&=\mathcal{P} _{\mathcal{Y}}\left( W\left( \mathbf{y}_k+\gamma _yU_{k+1}^{-\beta}\odot \nabla _yF\left( \mathbf{x}_k,\mathbf{y}_k;\xi _k^y \right) \right) \right) ,
\end{align}
\end{subequations}
where 
\begin{equation*}
\begin{aligned}
\boldsymbol{h}_{k}^{x}=\left[ \cdots ,g_{i,k}^{x}\odot g_{i,k}^{x},\cdots \right] ^T\in \mathbb{R} ^{n\times p}, \,\,
\boldsymbol{h}_{k}^{y}=\left[ \cdots ,g_{i,k}^{y}\odot g_{i,k}^{y},\cdots \right] ^T\in \mathbb{R} ^{n\times d},
\end{aligned}
\end{equation*}
and the matrices $U_k^{\alpha}$ and $V_k^{\beta}$ are redefined as follows:
\begin{equation}\label{Alg_D_TiAda}
\begin{aligned}
&V_{k}^{-\alpha}=\left[ \cdots ,v_{i,k}^{-\alpha},\cdots \right] ^T,\,\,\left[ v_{i,k} \right] _j=\max \left\{ \left[ m_{i,k}^{x} \right] _j,\left[ m_{i,k}^{y} \right] _j \right\} ,\,\, j\in \left[ p \right],
\\
&U_{k}^{-\beta}=\left[ \cdots ,u_{i,k}^{-\beta},\cdots \right] ^T, \,\, \left[ u_{i,k} \right] _j=\left[ m_{i,k}^{y} \right] _j, \,\, j\in \left[ d \right],
\end{aligned}
\end{equation}
where $\left[ \cdot \right] _j$ denotes the $j$-th element of a vector. 

Recalling the definitions of the inconsistency of stepsize in \eqref{Def_hete_stepsize}, we give the following notations:
\begin{equation}\label{Def_incon_coord_wise}
\begin{aligned}
&\tilde{V}_k=V_k-\bar{v}_k\mathbf{1}\mathbf{1}_{p}^{T}, \,\,
\bar{v}_k=\frac{1}{np}\sum_{i=1}^n{\sum_j^p{V_{ij}}}, \,\,
\bar{v}_{i,k}=\frac{1}{p}\sum_j^p{V_{ij}}, \,\,
\bar{v}_{j,k}=\frac{1}{n}\sum_{i=1}^n{V_{ij}},
\\
&\tilde{U}_k=U_k-\bar{u}_k\mathbf{1}\mathbf{1}_{p}^{T},\,\bar{u}_k=\frac{1}{nd}\sum_{i=1}^n{\sum_j^d{U_{ij}}},\,\,\bar{u}_{i,k}=\frac{1}{d}\sum_j^d{U_{ij}},\,\,\bar{u}_{j,k}=\frac{1}{n}\sum_{i=1}^n{U_{ij}},
\end{aligned}
\end{equation}
and
\begin{equation*}
\begin{aligned}
&\zeta _{V}^{2}=\underset{k\geqslant 0}{\mathrm{sup}}\left\{ \frac{\left\| V_{k}^{-\alpha}-\bar{v}_{k}^{-\alpha}\mathbf{1}\mathbf{1}_{p}^{T} \right\| ^2}{np\left( \bar{v}_{k}^{-\alpha} \right) ^2} \right\} , \,\,
\hat{\zeta}_{v}^{2}=\underset{k\geqslant 0}{\mathrm{sup}}\left\{ \frac{\left\| V_{k}^{-\alpha}-\left( V_k\mathbf{J}_p \right) ^{-\alpha} \right\| ^2}{np\left( \bar{v}_{k}^{-\alpha} \right) ^2} \right\} ,
\\
&\zeta _{U}^{2}=\underset{k\geqslant 0}{\mathrm{sup}}\left\{ \frac{\left\| U_{k}^{-\beta}-\bar{u}_{k}^{-\beta}\mathbf{1}\mathbf{1}_{d}^{T} \right\| ^2}{nd\left( \bar{u}_{k}^{-\beta} \right) ^2} \right\} ,\,\,\hat{\zeta}_{u}^{2}=\underset{k\geqslant 0}{\mathrm{sup}}\left\{ \frac{\left\| U_{k}^{-\beta}-\left( U_k\mathbf{J}_d \right) ^{-\beta} \right\| ^2}{nd\left( \bar{u}_{k}^{-\beta} \right) ^2} \right\}.
\end{aligned}
\end{equation*}

Building upon the established definitions of coordinate-wise stepsize inconsistency, the subsequent lemma is presented to show the non-convergence of the inconsistency term compared to Lemma \ref{Lem_converge_incons_formal}.

\begin{Lem}[Inconsistency, coordinate-wise]\label{Lem_converge_incons_opt_II}
Suppose Assumption \ref{Ass_SC_y}-\ref{Ass_graph} hold. For the proposed {\alg} algorithm, we have
\begin{equation}\label{Eq_incons_opt_II_dadast}
\begin{aligned}
&\frac{1}{K}\sum_{k=0}^{K-1}{\mathbb{E} \left[ \left\| \frac{\mathbf{1}^T}{n\bar{v}_{k+1}^{-\alpha}}\tilde{V}_{k+1}^{-\alpha}\odot \nabla _xF\left( \mathbf{x}_k,\mathbf{y}_k;\xi _{k}^{x} \right) \right\| ^2 \right]}
\\
&\leqslant 2\left( 1+\zeta _v \right) \zeta _v\sqrt{\frac{1}{n^{1-\alpha}}\left( \frac{4C^2\rho _W}{\left( 1-\rho _W \right) ^2} \right) ^{\alpha}}\frac{C^{2-2\alpha}}{\left( 1-\alpha \right) K^{\alpha}}+2np\hat{\zeta}_{v}^{2}C^2
\end{aligned}
\end{equation}
and
\begin{equation}
\begin{aligned}
&\frac{1}{K}\sum_{k=0}^{K-1}{\mathbb{E} \left[ \left\| \frac{\mathbf{1}^T}{n\bar{u}_{k+1}^{-\beta}}\tilde{U}_{k+1}^{-\beta}\odot \nabla _yF\left( \mathbf{x}_k,\mathbf{y}_k;\xi _{k}^{y} \right) \right\| ^2 \right]}
\\
&\leqslant 2\left( 1+\zeta _u \right) \zeta _u\sqrt{\frac{1}{n^{1-\beta}}\left( \frac{4C^2\rho _W}{\left( 1-\rho _W \right) ^2} \right) ^{\beta}}\frac{C^{2-2\beta}}{\left( 1-\beta \right) K^{\beta}}+2nd\hat{\zeta}_{u}^{2}C^2.
\end{aligned}
\end{equation}
In contrast, for D-TiAda, we have
\begin{equation}
\begin{aligned}
&\frac{1}{K}\sum_{k=0}^{K-1}{\mathbb{E} \left[ \left\| \frac{\mathbf{1}^T}{n\bar{v}_{k+1}^{-\alpha}}\tilde{V}_{k+1}^{-\alpha}\odot \nabla _xF\left( \mathbf{x}_k,\mathbf{y}_k;\xi _{k}^{x} \right) \right\| ^2 \right]}\leqslant p\zeta _{V}^{2}C^2,
\\
&\frac{1}{K}\sum_{k=0}^{K-1}{\mathbb{E} \left[ \left\| \frac{\mathbf{1}^T}{n\bar{u}_{k+1}^{-\beta}}\tilde{U}_{k+1}^{-\beta}\odot \nabla _yF\left( \mathbf{x}_k,\mathbf{y}_k;\xi _{k}^{y} \right) \right\| ^2 \right]}\leqslant d\zeta _{U}^{2}C^2.
\end{aligned}
\end{equation}
\end{Lem}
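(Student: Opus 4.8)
The plan is to follow the scalar argument of Lemma~\ref{Lem_converge_incons_formal}, but to first split the inconsistency into a component that the stepsize tracking protocol drives to zero and a genuinely non-vanishing coordinate-wise component. Concretely, I would insert $(V_{k+1}\mathbf{J}_p)^{-\alpha}$ into $\tilde{V}_{k+1}^{-\alpha}=V_{k+1}^{-\alpha}-\bar{v}_{k+1}^{-\alpha}\mathbf{1}\mathbf{1}_p^T$ and write
\[
\tilde{V}_{k+1}^{-\alpha}=\underbrace{\left(V_{k+1}^{-\alpha}-(V_{k+1}\mathbf{J}_p)^{-\alpha}\right)}_{\text{within-node}}+\underbrace{\left((V_{k+1}\mathbf{J}_p)^{-\alpha}-\bar{v}_{k+1}^{-\alpha}\mathbf{1}\mathbf{1}_p^T\right)}_{\text{across-node}}.
\]
After applying the projection $\mathbf{1}^T/(n\bar{v}_{k+1}^{-\alpha})$, taking the Hadamard product with $\nabla_xF$, and using $\|a+b\|^2\le 2\|a\|^2+2\|b\|^2$, the squared inconsistency term is controlled by twice each of the two contributions.

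First I would dispatch the within-node contribution: using the uniform gradient bound $\|g_{i,k}^x\|\le C$ from Assumption~\ref{Ass_bound_gra} together with the definition of $\hat{\zeta}_v^2$ in \eqref{Def_incon_coord_wise}, it is bounded by the $K$-independent constant $2np\hat{\zeta}_v^2C^2$. The essential point is that this term carries no $1/K$ decay, because the coordinate-wise maximum and exponentiation inside a node are never reconciled by node-level consensus; this is precisely the non-convergence phenomenon the lemma is meant to expose. The D-TiAda estimates fall out here as well: without tracking there is no across-node consensus to exploit, so one simply bounds the whole of $\tilde{V}_{k+1}^{-\alpha}$ (resp.\ $\tilde{U}_{k+1}^{-\beta}$) directly through the definition of $\zeta_V^2$ (resp.\ $\zeta_U^2$) and Assumption~\ref{Ass_bound_gra}, yielding $p\zeta_V^2C^2$ and $d\zeta_U^2C^2$ immediately.

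For the across-node contribution I would reuse the estimate chain of Lemma~\ref{Lem_converge_incons_formal}, now applied to the coordinate-averaged preconditioners: factor out $1/\bar{v}_{k+1}^{-\alpha}$ via the node-level bound $|\bar{v}_{k+1}^{-\alpha}-(V_{k+1}\mathbf{J}_p)^{-\alpha}|/(V_{k+1}\mathbf{J}_p)^{-\alpha}\le\zeta_v$, bound the running sum $\tfrac{1}{K}\sum_k\bar{v}_{k+1}^{-\alpha}\|\nabla_xF\|^2$ by Lemma~\ref{Lem_decre_grad_sum} to produce the $C^{2-2\alpha}/((1-\alpha)K^\alpha)$ factor, and control the node-level consensus error of the coordinate-averaged tracking variable through the tracking recursion \eqref{Eq_grad_sum_x}. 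Using $\|W-\mathbf{J}\|_2^2=\rho_W$, zero initial consensus, and the bounded increments $\boldsymbol{h}_k^x$, a geometric-sum argument (with the usual two-case split for the $\max$ between $\mathbf{m}^x$ and $\mathbf{m}^y$) bounds this consensus error by $\tfrac{2nC^2(1+\rho_W)\rho_W}{(1-\rho_W)^2}$, and the elementary inequality $\|(\cdot)^\alpha-\mathbf{1}\overline{(\cdot)}^\alpha\|^2\le\|\cdot-\mathbf{1}\overline{(\cdot)}\|^{2\alpha}$ then delivers the claimed $1/K^\alpha$ factor with coefficient $\sqrt{n^{-(1-\alpha)}(4C^2\rho_W/(1-\rho_W)^2)^\alpha}$. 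The dual inequalities follow verbatim with $\beta,U,\hat{\zeta}_u,\zeta_u$ replacing $\alpha,V,\hat{\zeta}_v,\zeta_v$.

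The hard part will be the across-node estimate: converting the consensus bound on the vector-valued tracking variables $\mathbf{m}^x,\mathbf{m}^y$ into a bound on the across-node deviation of the preconditioner $V^{-\alpha}$ under the coordinate-wise maximum requires the same delicate case analysis as in Lemma~\ref{Lem_converge_incons_formal}, but now carried out row-wise, while simultaneously verifying that the within-node term genuinely persists as a $K$-independent constant. This clean separation into a vanishing and a non-vanishing part is exactly what distinguishes the coordinate-wise variant from the scalar case treated in \eqref{Def_hete_stepsize} and Lemma~\ref{Lem_converge_incons_formal}.
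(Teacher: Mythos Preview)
Your proposal is correct and follows essentially the same route as the paper: insert $(V_{k+1}\mathbf{J}_p)^{-\alpha}$, split via $\|a+b\|^2\le 2\|a\|^2+2\|b\|^2$, bound the within-node piece directly through $\hat{\zeta}_v^2$ and Assumption~\ref{Ass_bound_gra}, and reduce the across-node piece to the scalar argument of Lemma~\ref{Lem_converge_incons_formal} applied to the coordinate-averaged preconditioners $\bar{v}_{i,k}$. The paper's proof is in fact terser than yours---it simply writes ``similar to the proof of Lemma~\ref{Lem_converge_incons_formal}'' for the across-node part---so your sketch of the consensus argument on the tracking recursion and the two-case split for the $\max$ is more explicit than what appears there.
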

\begin{proof}
For the coordinate-wise adaptive stepsize, with the definitions of Frobenius norm and Hadamard product, we have
\begin{equation}
\begin{aligned}
&\mathbb{E} \left[ \left\| \frac{\mathbf{1}^T}{n\bar{v}_{k+1}^{-\alpha}}\tilde{V}_{k+1}^{-\alpha}\odot \nabla _xF\left( \mathbf{x}_k,\mathbf{y}_k;\xi _{k}^{x} \right) \right\| ^2 \right] 
\\
&=\mathbb{E} \left[ \left\| \frac{\mathbf{1}^T}{n\bar{v}_{k+1}^{-\alpha}}\left( V_{k+1}^{-\alpha}-\left( V_{k+1}\mathbf{J} \right) ^{-\alpha}+\left( V_{k+1}\mathbf{J} \right) ^{-\alpha}-\bar{v}_{k+1}^{-\alpha}\mathbf{1}\mathbf{1}_{p}^{T} \right) \odot \nabla _xF\left( \mathbf{x}_k,\mathbf{y}_k;\xi _{k}^{x} \right) \right\| ^2 \right] 
\\
&\leqslant 2\mathbb{E} \left[ \left\| \frac{\mathbf{1}^T}{n\bar{v}_{k+1}^{-\alpha}}\left( \left( V_{k+1}\mathbf{J} \right) ^{-\alpha}-\bar{v}_{k+1}^{-\alpha}\mathbf{1}\mathbf{1}_{p}^{T} \right) \odot \nabla _xF\left( \mathbf{x}_k,\mathbf{y}_k;\xi _{k}^{x} \right) \right\| ^2 \right] 
\\
&+2\mathbb{E} \left[ \left\| \frac{\mathbf{1}^T}{n\bar{v}_{k+1}^{-\alpha}}\left( V_{k+1}^{-\alpha}-\left( V_{k+1}\mathbf{J} \right) ^{-\alpha} \right) \odot \nabla _xF\left( \mathbf{x}_k,\mathbf{y}_k;\xi _{k}^{x} \right) \right\| ^2 \right].
\end{aligned}
\end{equation}
For the first term on the RHS, according to the definitions given in \eqref{Def_incon_coord_wise}, we have 
\begin{equation}
\begin{aligned}
&\mathbb{E} \left[ \left\| \frac{\mathbf{1}^T}{n\bar{v}_{k+1}^{-\alpha}}\left( \left( V_{k+1}\mathbf{J} \right) ^{-\alpha}-\bar{v}_{k+1}^{-\alpha}\mathbf{1}\mathbf{1}_{p}^{T} \right) \odot \nabla _xF\left( \mathbf{x}_k,\mathbf{y}_k;\xi _{k}^{x} \right) \right\| ^2 \right] 
\\
&\leqslant \mathbb{E} \left[ \frac{1}{n^2\bar{v}_{k+1}^{-2\alpha}}\sum_{i=1}^n{\left( \bar{v}_{i,k+1}^{\alpha}-\bar{v}_{k+1}^{\alpha} \right) ^2\left\| \nabla _xf_i\left( x_{i,k},y_{i,k};\xi _{i,k}^{x} \right) \right\| ^2} \right].
\end{aligned}
\end{equation}
Then, for the second part, we have
\begin{equation}
\begin{aligned}
&\mathbb{E} \left[ \left\| \frac{\mathbf{1}^T}{n\bar{v}_{k+1}^{-\alpha}}\left( V_{k+1}^{-\alpha}-\left( V_{k+1}\mathbf{J} \right) ^{-\alpha} \right) \odot \nabla _xF\left( \mathbf{x}_k,\mathbf{y}_k;\xi _{k}^{x} \right) \right\| ^2 \right] 
\\
&\leqslant \frac{1}{n}\mathbb{E} \left[ \left\| \frac{V_{k+1}^{-\alpha}-\left( V_{k+1}\mathbf{J} \right) ^{-\alpha}}{\bar{v}_{k+1}^{-\alpha}} \right\| ^2\left\| \nabla _xF\left( \mathbf{x}_k,\mathbf{y}_k;\xi _{k}^{x} \right) \right\| ^2 \right] 
\\
&\leqslant p\hat{\zeta}_{v}^{2}\mathbb{E} \left[ \left\| \nabla _xF\left( \mathbf{x}_k,\mathbf{y}_k;\xi _{k}^{x} \right) \right\| ^2 \right].
\end{aligned}
\end{equation}
where the term $\hat{\zeta}_v^2$ is not guaranteed to be convergent because the stepsizes between the different dimensions of each node are not consistent.
Then, similar to the proof of Lemma \ref{Lem_converge_incons_formal}, we can obtain the result presented in \eqref{Eq_incons_opt_II_dadast}. 

Next, noticing that for D-TiAda, 
\begin{equation}
\begin{aligned}
&\mathbb{E} \left[ \left\| \frac{\mathbf{1}^T}{n\bar{v}_{k+1}^{-\alpha}}\tilde{V}_{k+1}^{-\alpha}\odot \nabla _xF\left( \mathbf{x}_k,\mathbf{y}_k;\xi _{k}^{x} \right) \right\| ^2 \right] 
\leqslant \frac{1}{n}\mathbb{E} \left[ \left\| \frac{\tilde{V}_{k+1}^{-\alpha}}{\bar{v}_{k+1}^{-\alpha}} \right\| ^2\left\| \nabla _xF\left( \mathbf{x}_k,\mathbf{y}_k;\xi _{k}^{x} \right) \right\| ^2 \right] \leqslant p\zeta _{V}^{2}C^2,
\end{aligned}
\end{equation}
and using Lemma \ref{Lem_converge_incons_formal}, we complete the proof.
\end{proof}

\begin{Thm}\label{Thm_con_dassc_coordinate}
Suppose Assumption \ref{Ass_SC_y}-\ref{Ass_graph} hold. Let $0< \beta<  \alpha<1$ and the total iteration satisfy
\begin{equation*}
\begin{aligned}
K=\Omega \left( \max \left\{
\left( \frac{\gamma _{x}^{2}\kappa ^4}{\gamma _{y}^{2}} \right) ^{\frac{1}{ \alpha -\beta }}, \quad
\left( \frac{1}{\left( 1-\rho _W \right) ^2} \right) ^{\max \left\{ \frac{1}{\alpha}, \frac{1}{\beta} \right\}} \right\} \right) .
\end{aligned}
\end{equation*}
to ensure time-scale separation and quasi-independence of the network.
For {\alg} with coordinate-wise adaptive stepsize, we have
\begin{equation}
\begin{aligned}
&\frac{1}{K}\sum_{k=0}^{K-1}{\mathbb{E} \left[ \left\| \nabla \varPhi \left( \bar{x}_k \right) \right\| ^2 \right]}\,\,
\\
&=\tilde{\mathcal{O}}\left( \frac{1}{K^{1-\alpha}}+\frac{1}{\left( 1-\rho _W \right) ^{\alpha}K^{\alpha}}+\frac{1}{K^{1-\beta}}+\frac{1}{\left( 1-\rho _W \right) K^{\beta}} \right) +\mathcal{O} \left( n\left( p\hat{\zeta}_{v}^{2}+\kappa ^2d\hat{\zeta}_{u}^{2} \right) C^2 \right).
\end{aligned}
\end{equation}
\end{Thm}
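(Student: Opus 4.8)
The plan is to mirror the proof of Theorem~\ref{Thm_con_dassc} line by line, replacing each scalar preconditioner $\bar v_{k}^{-\alpha},\bar u_{k}^{-\beta}$ and its deviation $\boldsymbol{\tilde v}_{k}^{-\alpha}$ by the coordinate-wise objects built from the Hadamard updates in~\eqref{Alg_DASSC_compact_II}, with inconsistency measured by the metrics in~\eqref{Def_incon_coord_wise}. First I would write the averaged primal system as $\bar x_{k+1}=\bar x_k-\gamma_x\frac{\mathbf 1^T}{n}\big(\bar v_{k+1}^{-\alpha}\mathbf 1\mathbf 1_p^T+\tilde V_{k+1}^{-\alpha}\big)\odot\nabla_xF(\mathbf x_k,\mathbf y_k;\xi_k^x)$, separating an adaptive-descent part from a coordinate-wise inconsistency part. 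Applying the $L_\Phi$-smoothness of $\Phi$ from Lemma~\ref{Lem_envelope} together with Young's inequality then reproduces the descent inequality of Lemma~\ref{Lem_decent_lem}, whose four residual terms $S_1$--$S_4$ are unchanged in form except that $S_4$ is now the coordinate-wise inconsistency term appearing on the left-hand side of~\eqref{Eq_incons_opt_II_dadast}.

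Second, the terms $S_1$ (decaying adaptive stepsize), $S_2$ (consensus error) and $S_3$ (dual optimality gap) can be bounded essentially verbatim: Lemma~\ref{Lem_senq_sum} still controls the accumulated ratios coordinate by coordinate, the mixing contraction under Assumption~\ref{Ass_graph} gives the same consensus recursion, and the two-phase time-scale-separation argument of Lemmas~\ref{Lem_y_opt_gap_phase_I}--\ref{Lem_S_1_k} goes through because Assumption~\ref{Ass_bound_gra} bounds every coordinate of the stochastic gradients by $C$. The only genuinely new ingredient is Lemma~\ref{Lem_converge_incons_opt_II}, which I would substitute for Lemma~\ref{Lem_converge_incons_formal} when bounding $S_4$ and its dual analogue. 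Its decisive feature is that the inconsistency splits into a vanishing part governed by $\zeta_v$ (the node-level disagreement, driven to zero at rate $K^{-\alpha}$ by the stepsize-tracking protocol) and a residual part governed by $\hat\zeta_v^2$ (the within-node, across-coordinate disagreement, which tracking does not eliminate), producing the constant $2np\hat\zeta_v^2C^2$.

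Finally, I would plug all the bounds into the telescoped descent inequality, choose the phase-separation threshold $G=\big(16(1+\zeta_v^2)\gamma_x^2\kappa^4/\gamma_y^2\big)^{1/(2\alpha-2\beta)}$ exactly as in the scalar proof, and invoke the stated transient-time condition on $K$ so that the analogues of $E_G$ and $E_W$ are dominated by the $\tilde{\mathcal O}(K^{-(1-\alpha)})$-type terms; collecting the residual constants from Lemma~\ref{Lem_converge_incons_opt_II} then produces the additive $\mathcal O\!\left(n(p\hat\zeta_v^2+\kappa^2 d\hat\zeta_u^2)C^2\right)$. The main obstacle is re-deriving the two-phase dual-optimality analysis in the Hadamard setting, since the per-coordinate maximum defining $v_{i,k}$ breaks the clean scalar relation $\bar u_{k+1}\le\bar v_{k+1}$ used in Lemma~\ref{Lem_y_opt_gap_phase_II}; I would handle this by arguing through the coordinate-averaged surrogates $\bar v_{i,k},\bar v_{j,k}$ of~\eqref{Def_incon_coord_wise} and bounding the cross terms $E_{2,k}$--$E_{4,k}$ via Cauchy--Schwarz, taking care that the residual factor $\hat\zeta$ contaminates only the constant term and not the decaying $S_1$-type rates.
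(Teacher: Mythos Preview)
Your proposal is correct and follows essentially the same approach as the paper: reuse the scalar proof of Theorem~\ref{Thm_con_dassc} (culminating in the bound~\eqref{Eq_Thm2_proof_1}) and substitute Lemma~\ref{Lem_converge_incons_opt_II} for Lemma~\ref{Lem_converge_incons_formal} when controlling the inconsistency terms $S_4$ and its dual analogue, which introduces the non-vanishing $\mathcal O(n(p\hat\zeta_v^2+\kappa^2 d\hat\zeta_u^2)C^2)$ residual. The paper's own proof is a single sentence saying exactly this; your write-up is more detailed and even flags a subtlety (the per-coordinate $\max$ potentially disrupting $\bar u_{k+1}\le\bar v_{k+1}$) that the paper does not address explicitly, but the overall strategy is identical.
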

\begin{proof}
With the help of Lemma \ref{Lem_converge_incons_opt_II} and the obtained result \eqref{Eq_Thm2_proof_1} in the proof of Theorem \ref{Thm_con_dassc}, we can derive the convergence results for {\alg} with coordinate-wise adaptive stepsize.
\end{proof}

\begin{Rem}
In Theorem \ref{Thm_con_dassc_coordinate}, we show that the coordinate-wise variant of {\alg} exhibits a steady-state error in its upper bound. This error depends on the number of nodes and the dimension of the problem, which stems from the stepsize inconsistency in each dimension of the local decision variables for each node (c.f., Line 3 of Algorithm \ref{Alg_D_TiAda_SC_II}).
\end{Rem}


\end{document}